\tikzstyle{every node}=[circle, draw, fill=white,inner sep=0pt, minimum width=6pt]
\tikzstyle{nodelabel}=[rounded corners,fill=none,inner sep=5pt,draw=none]
\tikzstyle{matching} = [ultra thick]
\tikzset{snake/.style={decorate, decoration=snake}}
\begin{document}

\newcommand{\BvN}{Birkhoff--von Neumann}
\newcommand{\PMc}{\mbox{PM-compact}}
\newcommand{\mcg}{matching covered graph}
\newcommand{\binv}{\mbox{$b$-invariant}}
\newcommand{\NT}{\mbox{Norine-Thomas}}

\title{Birkhoff--von Neumann graphs that are PM-compact}
\author{Marcelo H. de Carvalho$^{a}$\footnote{Supported by Fundect-MS and CNPq.} \and
            Nishad Kothari$^{b}$
            \footnote{Supported by Austrian Science Foundation FWF (START Y463)
            and by FAPESP Brazil (2018/04679-1).} \and
            Xiumei Wang$^{c}$\footnote{Supported by the National Natural Science Foundation of China (Nos. 11571323 and 11801526).} \and
            Yixun Lin$^{c}$\\
{\small $^{a}$ Institute of Computing, UFMS, Campo Grande, Brazil}\\
{\small $^{b}$ Faculty of Mathematics, University of Vienna, Austria}\\
{\small $^{c}$ School of Mathematics and Statistics,
Zhengzhou University, Zhengzhou, China}}

\UKvardate
\date{26 June, 2019}
  \maketitle 
  \thispagestyle{empty}

\begin{abstract}
A well-studied geometric object in combinatorial optimization is the
perfect matching polytope of a graph $G$ ---
the convex hull of the incidence vectors of all perfect matchings of $G$. In any investigation concerning the perfect
matching polytope, one may assume that $G$
is matching covered --- that is, $G$ is a connected graph (of order at least two) and each edge of $G$ lies in some perfect matching.

\smallskip
A graph $G$ is \BvN\ if its perfect matching polytope is characterized solely by non-negativity and
degree constraints. A result of Balas (1981) implies that $G$ is \BvN\ if and only if $G$ does not contain a pair of vertex-disjoint
odd cycles $(C_1,C_2)$ such that $G-V(C_1)-V(C_2)$ has a perfect matching. It follows immediately that the corresponding decision
problem is in co-$\mathcal{NP}$. However, it is not known to be in $\mathcal{NP}$. The problem is in $\mathcal{P}$
if the input graph is planar --- due to a result
of Carvalho, Lucchesi and Murty (2004). More recently, these authors, along with Kothari (2017), have shown that this problem
is equivalent to the seemingly unrelated problem of deciding whether a given graph is $\overline{C_6}$-free.

\smallskip
The combinatorial diameter of a polytope is the diameter of its $1$-skeleton graph.
A~graph $G$ is \PMc\ if the combinatorial diameter of its perfect matching polytope equals one.
Independent results of Balinski and Russakoff (1974), and of Chv{\'a}tal (1975), imply
that $G$ is \PMc\ if and only if $G$ does not contain a pair of vertex-disjoint
even cycles $(C_1,C_2)$ such that $G-V(C_1)-V(C_2)$ has a perfect matching.
Once again the corresponding decision problem is in co-$\mathcal{NP}$, but it is not known to be in $\mathcal{NP}$.
The problem is in $\mathcal{P}$
if the input graph is bipartite or is near-bipartite --- due to a result of Wang, Lin, Carvalho, Lucchesi, Sanjith and Little (2013).

\smallskip
In this paper, we consider the ``intersection'' of the aforementioned problems.
We give an alternative description of matching covered graphs
that are \BvN\ as well as \PMc; our description implies that the corresponding
decision problem is in $\mathcal{P}$.

\end{abstract}

\tableofcontents

\section{The perfect matching polytope}

Graphs considered in this paper are loopless; however, they may have multiple edges (joining any two vertices).
A graph is {\it simple} if it is devoid of multiple edges.
For a graph $G:=(V,E)$, we use $\mathbb{R}^E$ to denote the set of real vectors whose coordinates are indexed by the edges of $G$.
The {\it perfect matching polytope} of $G$, denoted by ${\mathcal Poly}(G)$, is the convex hull of the incidence vectors
of all perfect matchings of $G$.

\smallskip
For a set $S \subseteq V$, the {\it cut} of $S$, denoted by $\partial(S)$,
is the set of edges of $G$ that have one end in $S$, and the other end in $\overline{S}:=V(G)-S$.
We refer to $S$~and~$\overline{S}$ as the {\it shores} of the cut $\partial(S)$.
A cut is {\it trivial} if either shore is a singleton.
For a vertex $v$ of $G$, we simplify the notation $\partial(\{v\})$ to $\partial(v)$.
For a vector $x \in \mathbb{R}^{E}$ and a set $F \subseteq E$, we use $x(F)$ to denote $\sum_{e \in F} x(e)$.

\subsection{\BvN\ graphs}

For a graph $G:=(V,E)$, a vector $x \in \mathbb{R}^E$ is {\it $1$-regular} if $x(\partial(v)) = 1$
for each vertex $v$. Since the incidence vector corresponding to any perfect matching is non-negative and
$1$-regular, it follows that each vector in ${\mathcal Poly}(G)$ is non-negative and $1$-regular.
In the case of bipartite graphs, this obvious necessary condition is in fact sufficient, due to
the classical results of Birkhoff (1946) and of von Neumann (1953).

\begin{thm}
For a bipartite graph $G:=(V,E)$, a vector $x$ in $\mathbb{R}^E$ belongs to ${\mathcal Poly}(G)$ if and only if it is
non-negative and $1$-regular.
\end{thm}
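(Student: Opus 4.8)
The forward implication is already noted in the text and is immediate: the incidence vector of any perfect matching is non-negative and $1$-regular, and both properties survive the formation of convex combinations, so every vector of ${\mathcal Poly}(G)$ has them. The content is the converse, and the plan is to prove it by induction on the number of edges $e$ with $0 < x(e) < 1$ --- call these the \emph{fractional} edges of $x$. In the base case $x$ has no fractional edge, hence is a $0/1$ vector; since $x$ is $1$-regular, the edges $e$ with $x(e) = 1$ form a perfect matching $M$, and $x$ is the incidence vector of $M$, so $x \in {\mathcal Poly}(G)$. (In particular this shows that if any non-negative $1$-regular vector exists then $G$ has a perfect matching, which keeps the statement consistent when ${\mathcal Poly}(G) = \emptyset$.)

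For the inductive step, assume $x$ has a fractional edge and let $H$ be the spanning subgraph of $G$ whose edge set consists of the fractional edges of $x$. The key observation is that every non-isolated vertex of $H$ has degree at least two in $H$: if $v$ were incident to exactly one fractional edge $e$, then $x(\partial(v))$ would equal $x(e)$ plus a non-negative integer, which cannot be $1$ since $x(e) \notin \{0,1\}$. Hence $H$ contains a cycle $C$, and since $G$ is bipartite, $C$ has even length. Properly $2$-colour the edges of $C$ red and blue so that each colour class is a perfect matching of $C$, and set $\delta := \min\{\min_{e \in E(C)} x(e),\ \min_{e \in E(C)}(1 - x(e))\}$, which is positive because every edge of $C$ is fractional. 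Let $x^{+}$ be obtained from $x$ by adding $\delta$ on the red edges of $C$ and subtracting $\delta$ on the blue edges of $C$, all other coordinates unchanged, and let $x^{-}$ be obtained symmetrically.

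Each of $x^{+}, x^{-}$ is non-negative by the choice of $\delta$, and $1$-regular because each vertex of $C$ meets exactly one red and one blue edge of $C$, so the net change at every vertex is zero; moreover $x = \tfrac{1}{2}(x^{+} + x^{-})$. Since $\delta$ was chosen so that at least one edge of $C$ is driven to value $0$ or $1$, and no edge outside $C$ (in particular no non-fractional edge) is altered, both $x^{+}$ and $x^{-}$ have strictly fewer fractional edges than $x$. By the induction hypothesis $x^{+}, x^{-} \in {\mathcal Poly}(G)$, and therefore so does their midpoint $x$, completing the induction.

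I expect the only delicate points to be the degree-two property of $H$ and the bookkeeping verifying that the perturbation simultaneously preserves non-negativity and $1$-regularity while strictly reducing the fractional-edge count; there is no genuinely hard step. Bipartiteness is used exactly once --- to guarantee that the cycle $C$ found in $H$ is even, so that it admits the alternating $2$-colouring --- and an odd cycle in $H$ would be the obstruction in the non-bipartite case, where the theorem fails. An alternative, purely polyhedral route would be to note that $\{x \ge 0 : x(\partial(v)) = 1 \text{ for all } v\}$ is defined by a totally unimodular system (the vertex--edge incidence matrix of a bipartite graph), so all its vertices are integral, hence perfect matchings, and every point of this bounded polytope is a convex combination of them; I would nonetheless present the combinatorial argument above, as it is self-contained and closer to the techniques used later in the paper.
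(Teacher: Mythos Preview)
The paper does not actually prove this theorem; it is stated as a classical result attributed to Birkhoff and to von Neumann, so there is no proof in the paper to compare against. Your argument is the standard combinatorial one and is almost right, but there is a real slip in the inductive step.

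You assert that, with the single symmetric choice $\delta := \min_{e\in E(C)}\min\{x(e),\,1-x(e)\}$, \emph{both} $x^{+}$ and $x^{-}$ have strictly fewer fractional edges than $x$. This is false in general. In $x^{+}$ an edge of $C$ is driven to $0$ or $1$ only if $\delta$ equals $x(e)$ for some \emph{blue} $e$ or $1-x(e)$ for some \emph{red} $e$; in $x^{-}$ the roles of the colours are swapped. The minimum defining $\delta$ may be attained exclusively on the ``wrong'' colour for one of the two perturbations. Concretely, on a $4$-cycle with red values $0.1,\,0.5$ and blue values $0.5,\,0.5$ one gets $\delta=0.1$, and all four coordinates of $x^{+}$ lie strictly in $(0,1)$; only $x^{-}$ loses a fractional edge. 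So the induction hypothesis cannot be applied to $x^{+}$.

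The repair is routine. Either use asymmetric step sizes
\[
\delta^{+}:=\min\Bigl\{\min_{e\ \text{blue}}x(e),\ \min_{e\ \text{red}}\bigl(1-x(e)\bigr)\Bigr\},\qquad
\delta^{-}:=\min\Bigl\{\min_{e\ \text{red}}x(e),\ \min_{e\ \text{blue}}\bigl(1-x(e)\bigr)\Bigr\},
\]
so that each of $x^{+},x^{-}$ genuinely has some edge of $C$ at $0$ or $1$, and recover $x$ as the convex combination $\dfrac{\delta^{-}}{\delta^{+}+\delta^{-}}\,x^{+}+\dfrac{\delta^{+}}{\delta^{+}+\delta^{-}}\,x^{-}$. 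Or, more cleanly, drop the induction on fractional edges and argue directly about vertices of the polytope $\{x\ge 0:\ x(\partial(v))=1\ \forall v\}$: if a vertex $x$ had a fractional edge, your cycle $C$ with your original symmetric $\delta>0$ already gives $x=\tfrac{1}{2}(x^{+}+x^{-})$ with $x^{+}\neq x^{-}$ both in the polytope, contradicting extremality. That version needs only $x^{+}\neq x^{-}$, which is immediate, and sidesteps the bookkeeping entirely. Your alternative remark about total unimodularity is also a valid (and quicker) route.
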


A graph $G:=(V,E)$ is {\it \BvN} if it satisfies the property that a
vector $x$ in $\mathbb{R}^E$ belongs to ${\mathcal Poly}(G)$ if and only if it is
non-negative and $1$-regular. (Thus all bipartite graphs are \BvN.)
The complete graph $K_4$ is also \BvN, whereas the triangular prism $\overline{C_6}$
is not. This suggests the following problem.

\begin{prb}
\label{prb:BvN}
Characterize \BvN\ graphs. (Is the problem of deciding whether a given graph is \BvN\ in the complexity class $\mathcal{NP}$?
Is it in $\mathcal{P}$?)
\end{prb}

\subsection{\PMc\ graphs}

The {\it combinatorial diameter} of a polytope is the diameter of its $1$-skeleton graph.
A graph $G$ is {\it \PMc} if the combinatorial diameter of ${\mathcal Poly}(G)$ equals one, or equivalently,
if the $1$-skeleton graph of ${\mathcal Poly}(G)$ is a complete graph.
Balinski and Russakoff \cite{baru74},
and independently Chv{\'a}tal \cite{chva75},
showed that two vertices of the $1$-skeleton graph of ${\mathcal Poly}(G)$
are adjacent if and only if the symmetric difference of the corresponding perfect matchings
is exactly one (even) cycle. Consequently, a graph $G$ is \PMc\ if and only if the symmetric
difference of any two perfect matchings of $G$ is exactly one cycle. For instance, each of $K_4$ and $\overline{C_6}$
is \PMc, whereas the cube graph (on eight vertices) is not. This suggests the following problem.

\begin{prb}
\label{prb:PMc}
Characterize \PMc\ graphs. (Is the problem of deciding whether a given graph is \PMc\ in the complexity class $\mathcal{NP}$?
Is it in $\mathcal{P}$?)
\end{prb}

\subsection{Conformal bicycles}

A graph $G$ is {\it matchable} if it has a perfect matching.
A connected graph $G$, of order at least two, is {\it matching covered} if each edge belongs to some perfect
matching of $G$.

\smallskip
Using Edmond's algorithm, one may decide in polynomial-time
whether or not an edge belongs to some perfect matching.
Consequently,
it is not difficult to see that the search for an answer to Problem~\ref{prb:BvN},
or for an answer to Problem~\ref{prb:PMc}, may be restricted to {\mcg}s.

\smallskip
A subgraph $H$ of a matchable graph $G$ is {\it conformal} if $G-V(H)$
has a perfect matching. By a {\it conformal bicycle}, we mean a pair of vertex-disjoint cycles $(C_1,C_2)$ such that
\mbox{$G-V(C_1)-V(C_2)$} has a perfect matching. Since $|V(G)|$ is even, the parities of~$C_1$ and of~$C_2$ are the same.
We say that the conformal bicycle $(C_1,C_2)$ is {\it odd} if each of $C_1$ and $C_2$ is an odd cycle;
otherwise, it is {\it even}.

\smallskip
The aforementioned result of Balinski and Russakoff \cite{baru74}, and of Chv{\'a}tal \cite{chva75},
implies the following characterization
of \PMc\ matchable graphs; see \cite{wyl15} for a proof.

\begin{thm}
\label{thm:PMc-iff-no-even-conformal-bicycle}
A matchable graph is \PMc\ if and only if it does not contain an even conformal bicycle.
\end{thm}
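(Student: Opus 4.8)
The plan is to reduce everything to the combinatorial description of the $1$-skeleton of ${\mathcal Poly}(G)$ recalled just above: two perfect matchings $M_1$ and $M_2$ are adjacent vertices of this skeleton if and only if $M_1 \triangle M_2$ is exactly one (necessarily even) cycle, so that $G$ is \PMc\ precisely when $M_1 \triangle M_2$ is a single cycle for every pair of distinct perfect matchings $M_1,M_2$ of $G$. I will also use the elementary fact that, for any two perfect matchings $M_1,M_2$ of $G$, the subgraph with edge set $M_1 \triangle M_2$ is a vertex-disjoint union of (even) cycles --- each alternating between edges of $M_1$ and of $M_2$ --- together with some isolated vertices. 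Both implications will be proved by contraposition.

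For the implication ``\PMc\ $\Rightarrow$ no even conformal bicycle'', suppose $G$ has an even conformal bicycle $(C_1,C_2)$; thus $C_1$ and $C_2$ are vertex-disjoint even cycles and $G-V(C_1)-V(C_2)$ has a perfect matching $N$. Each even cycle $C_i$ is the union of two perfect matchings $P_i$ and $Q_i$ of $C_i$ with $P_i \cap Q_i = \emptyset$. Then $M_1 := P_1 \cup P_2 \cup N$ and $M_2 := Q_1 \cup Q_2 \cup N$ are distinct perfect matchings of $G$, and $M_1 \triangle M_2 = E(C_1) \cup E(C_2)$, which is a disjoint union of two cycles rather than one. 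Hence $M_1$ and $M_2$ are nonadjacent vertices of the $1$-skeleton and $G$ is not \PMc.

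For the converse ``no even conformal bicycle $\Rightarrow$ \PMc'', suppose $G$ is not \PMc. Then there are distinct perfect matchings $M_1,M_2$ for which $M_1 \triangle M_2$ is not a single cycle; since $M_1 \triangle M_2$ is a nonempty disjoint union of even cycles and isolated vertices, it must contain at least two cycles, of which I pick two and call them $C_1$ and $C_2$. These are vertex-disjoint even cycles, so it remains only to exhibit a perfect matching of $G-V(C_1)-V(C_2)$. For this I take the set of edges of $M_1$ having both ends outside $V(C_1)\cup V(C_2)$: if $v \notin V(C_1)\cup V(C_2)$ then, since each of $C_1$ and $C_2$ is a connected component of the subgraph $M_1\triangle M_2$, the $M_1$-edge at $v$ cannot meet $V(C_1)\cup V(C_2)$ (such an edge would lie on $C_1$ or $C_2$, forcing $v$ onto that cycle), so $v$ is matched by $M_1$ within $V(G)-V(C_1)-V(C_2)$. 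Thus these edges of $M_1$ form a perfect matching of $G-V(C_1)-V(C_2)$, and $(C_1,C_2)$ is an even conformal bicycle.

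I do not anticipate a real obstacle: the theorem is essentially a reformulation of the Balinski--Russakoff / Chv\'atal adjacency criterion. The two places that need a little care are the observation that an even cycle decomposes into exactly two perfect matchings (used to manufacture $M_1$ and $M_2$ in the first implication) and the verification in the second implication that no edge of $M_1$ crosses out of $V(C_1)\cup V(C_2)$ --- which is precisely the statement that $C_1$ and $C_2$ are components of $M_1\triangle M_2$.
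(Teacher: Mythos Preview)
Your proof is correct. Both directions are carried out cleanly: the construction of $M_1$ and $M_2$ from an even conformal bicycle is fine, and in the converse the key observation --- that for a vertex $w \in V(C_i)$ the unique $M_1$-edge at $w$ lies on $C_i$ and hence stays inside $V(C_i)$ --- is exactly what guarantees that $M_1$ restricts to a perfect matching of $G - V(C_1) - V(C_2)$.

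As for comparison with the paper: the paper does not actually supply a proof of this theorem. It states the result and refers the reader to \cite{wyl15}. Your argument is the natural one and is essentially what one finds in that reference: it is a direct unpacking of the Balinski--Russakoff/Chv\'atal adjacency criterion together with the standard fact that a symmetric difference of two perfect matchings is a disjoint union of even alternating cycles. There is no meaningfully different route here; the theorem really is just a restatement of the adjacency criterion in the language of conformal bicycles.
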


Consequently, every simple matchable graph on at most six vertices
is \PMc. However, this is not necessarily true when we allow multiple edges. For example, $K_4$ is \PMc.
However, $K_4$ with multiple edges may have two vertex-disjoint cycles (each of length two), in which
case the graph is not \PMc. On the other hand, if a graph $G$ is \PMc\ then the underlying simple graph of $G$ is also \PMc.

\smallskip
If a matchable graph $G$ has an odd conformal bicycle, then it is easy to construct a non-negative
$1$-regular vector that does not belong to ${\mathcal Poly}(G)$, whence $G$ is not \BvN. A result of Balas \cite{bala81}
shows that the converse holds as well.

\begin{thm}
\label{thm:BvN-iff-no-odd-conformal-bicycle}
A matchable graph is \BvN\ if and only if it does not contain an odd conformal bicycle.
\end{thm}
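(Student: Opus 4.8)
\quad The forward implication is the easy one, as the paragraph preceding the statement already indicates. Given an odd conformal bicycle $(C_1,C_2)$, with $G-V(C_1)-V(C_2)$ having a perfect matching $M$, I would take $x\in\mathbb{R}^E$ equal to $\tfrac12$ on every edge of $C_1$ and of $C_2$, to $1$ on every edge of $M$, and to $0$ elsewhere. Then $x$ is non-negative and $1$-regular. If it lay in ${\mathcal Poly}(G)$, it would be a convex combination of incidence vectors of perfect matchings, each supported inside $E(C_1)\cup E(C_2)\cup M$; since no edge of that support joins two of the sets $V(C_1)$, $V(C_2)$, $V(M)$, each such perfect matching would restrict to a perfect matching of the \emph{odd} cycle $C_1$ --- impossible. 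Hence $x\notin{\mathcal Poly}(G)$, so $G$ is not \BvN.

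For the converse I would prove the contrapositive. Write $P(G):=\{x\in\mathbb{R}^E:\ x\ge 0,\ x(\partial(v))=1\text{ for every }v\}$. Since ${\mathcal Poly}(G)$ is, by definition, the convex hull of the integer points of $P(G)$, the graph $G$ fails to be \BvN\ exactly when $P(G)$ has a non-integral vertex; fix such a vertex $x$. Invoking the classical half-integrality of the fractional perfect matching polytope, $x$ is $\{0,\tfrac12,1\}$-valued, its $1$-edges form a matching $N$, its $\tfrac12$-edges form pairwise vertex-disjoint odd cycles $C_1,\dots,C_k$, and $V(N),V(C_1),\dots,V(C_k)$ partition $V(G)$. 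A parity count ($|V(G)|$ and $|V(N)|$ even, each $|V(C_i)|$ odd) makes $k$ even, and $k\ge1$ since $x$ is non-integral, so $k\ge2$. It remains to convert the data ``$k\ge2$ disjoint odd cycles $C_1,\dots,C_k$, with $N$ saturating the rest'' into a genuine odd conformal bicycle, i.e.\ to exhibit $i\ne j$ with $G-V(C_i)-V(C_j)$ matchable. If $k=2$ this is immediate: $N$ is itself a perfect matching of $G-V(C_1)-V(C_2)$.

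For $k\ge4$ the matching $N$ no longer saturates $G-V(C_i)-V(C_j)$, and here I would finally use that $G$ is matchable. The plan: fix a perfect matching $M$ of $G$ maximizing $|M\cap N|$; show that then $M\triangle N$ has no even-cycle component (swapping $M$ along such a component would increase $|M\cap N|$), so $M\triangle N$ is a disjoint union of $M$--$N$-alternating paths, each of odd length, with both ends in $V(C_1)\cup\cdots\cup V(C_k)$ and all internal vertices in $V(N)$; note, by a parity count on each $C_i$, that a positive (odd) number of these paths join $V(C_i)$ to the other cycles, so the auxiliary multigraph $\Gamma$ on $\{C_1,\dots,C_k\}$ recording these joinings has no isolated vertex. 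If some component of $\Gamma$ consists of just two cycles, say $C_1,C_2$, then $G-V(C_1)-V(C_2)$ is matchable: starting from $M$, delete the $M$-edges meeting $V(C_1)\cup V(C_2)$ and re-saturate the newly exposed vertices by switching each affected alternating path from its $M$-edges to its $N$-edges (the ``dumbbell'' $C_1\cup Q\cup C_2$, where $Q$ is the joining path, being re-matched exactly so) --- and $\{C_1,C_2\}$ being a whole component of $\Gamma$ is what keeps every vertex outside $V(C_1)\cup V(C_2)$ saturated. Otherwise every component of $\Gamma$ has $\ge 3$ cycles; then pick $C_1,C_2$ joined by an alternating path $Q$, put $Z:=V(C_1)\cup V(C_2)\cup V(Q)$, observe that $G[Z]$ is matchable (the $M$-edges along $Q$ together with a perfect matching of each of the two even paths obtained by deleting from $C_1$ and from $C_2$ their endpoints on $Q$), that $x$ restricts to a non-integral vertex of $P(G-Z)$ with cycle family $C_3,\dots,C_k$, and recurse on the smaller graph $G-Z$, lifting an odd conformal bicycle of $G-Z$ to one of $G$ by adjoining a perfect matching of $G[Z]$.

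The main obstacle is the content of the last paragraph: verifying that the re-routing of $M$ always succeeds when $\{C_1,C_2\}$ is a component of $\Gamma$, and, in the recursive step, coping with the possibility that $G-Z$ is \emph{not} matchable --- which would instead require a direct appeal to Tutte's theorem to find a conformal bicycle inside one of the odd components that obstruct a perfect matching of $G-Z$. Everything up to that point is routine once the reformulation ``$G$ is \BvN\ iff $P(G)$ has no non-integral vertex'' and the half-integrality of $P(G)$ are in place.
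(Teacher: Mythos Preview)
The paper does not give its own proof of this theorem; it is quoted as a result of Balas~\cite{bala81}, with only the easy direction sketched in the preceding paragraph. So there is nothing to compare against, and your proposal must stand on its own.

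Your forward direction and the set-up for the converse (half-integrality of the vertices of $P(G)$, parity of $k$, the case $k=2$, the construction of $\Gamma$ and the argument when some component of $\Gamma$ has exactly two cycles) are all correct. The gap is precisely where you yourself locate it: the recursive branch. In fact your claim that $x$ restricts to a point of $P(G-Z)$ is fine --- the internal vertices of $Q$ are $N$-matched to one another, so deleting $Z$ removes whole $N$-edges --- but the hypothesis that $G-Z$ is \emph{matchable} can genuinely fail: since the component of $\Gamma$ containing $C_1,C_2$ has at least three cycles, some other alternating path leaves $C_1\cup C_2$ for $C_j$ with $j\ge 3$, and deleting $Z$ strands its second vertex with respect to $M$. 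The vague appeal to Tutte's theorem does not obviously rescue this.

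There is a clean fix that stays entirely within your framework and eliminates the recursion. Instead of fixing one non-integral vertex $x$, choose $x$ among all non-integral vertices of $P(G)$ with the number $k$ of odd cycles \emph{minimum}, and show $k=2$. If $k\ge 4$, build $M$, $\Gamma$ and the connecting path $Q$ exactly as you do; you already note that the dumbbell $C_1\cup Q\cup C_2$ carries a perfect matching $M^\ast$. Now set $x'$ to be $1$ on $M^\ast$, $1$ on the $N$-edges off $Q$ (these are exactly the $N$-edges avoiding $V(C_1)\cup V(Q)\cup V(C_2)$, by the observation above), $\tfrac12$ on $E(C_3)\cup\cdots\cup E(C_k)$, and $0$ elsewhere. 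Then $x'$ is a half-integral vertex of $P(G)$ with $k-2\ge 2$ odd cycles, contradicting the minimality of $k$. With this change your argument is complete.
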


Consequently, every matchable graph (not necessarily simple) on at most four vertices is \BvN.
Furthermore, a graph $G$ is \BvN\ if and only if the underlying simple graph of $G$ is \BvN.

\smallskip
It follows from
Theorems~\ref{thm:BvN-iff-no-odd-conformal-bicycle} and \ref{thm:PMc-iff-no-even-conformal-bicycle},
respectively, that
the problems of deciding whether a graph $G$ is \BvN, and of deciding whether $G$ is \PMc, both belong
to the complexity class co-$\mathcal{NP}$. However, to the best of our knowledge,
it is not known whether either of these problems is in $\mathcal{NP}$.

\subsection{Main result}

In this paper, we consider the ``intersection'' of these two problems.
We provide an exact characterization of {\mcg}s that are \BvN\ as well as \PMc.
Consequently, the problem of deciding whether a graph is \BvN\ as well as \PMc\
is in $\mathcal{P}$.
Before stating our result precisely, we briefly mention special cases of Problems~\ref{prb:BvN} and \ref{prb:PMc}
that have already been solved in the literature.

\smallskip
The problem of characterizing \BvN\ graphs is equivalent to another important problem in matching theory --- that of
characterizing ``solid'' graphs; see \cite{clm04}.
Carvalho, Lucchesi and Murty \cite{clm06} characterized the \BvN\ planar graphs.
Wang, Lin, Carvalho, Lucchesi, Sanjith and Little \cite{wlclsl13} characterized \PMc\ graphs
that are bipartite or ``near-bipartite'', whereas Wang, Shang, Lin and Carvalho \cite{wslc14}
characterized those that are cubic and claw-free.

\smallskip
Let $G$ be a \mcg\ and let $v$ be a vertex of degree two, with two distinct
neighbours $u$ and $w$. The {\it bicontraction} of $v$ is the operation of contracting the two edges $vu$ and $vw$
incident with $v$. The {\it retract} of $G$ is the graph obtained from $G$ by bicontracting all its degree two vertices.
It is easy to prove that the retract of a \mcg\ is also matching covered. Carvalho et al. \cite{clm05} showed that
the retract of a \mcg\ is unique up to isomorphism.
The following facts are easily proved
using the characterizations provided by
Theorems~\ref{thm:PMc-iff-no-even-conformal-bicycle}~and~\ref{thm:BvN-iff-no-odd-conformal-bicycle}, respectively.

\begin{prop}
\label{prop:PMc-retract}
A \mcg\ is \PMc\ if and only if its retract is \PMc.
\end{prop}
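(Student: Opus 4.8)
The plan is to reduce the statement to the forbidden-substructure characterization of Theorem~\ref{thm:PMc-iff-no-even-conformal-bicycle} and to analyse what a \emph{single} bicontraction does to even conformal bicycles. The retract of $G$ is obtained from $G$ by a finite sequence of bicontractions of degree-two vertices --- each such operation decreasing the number of vertices by exactly two --- and each graph in the sequence is again a \mcg{} (hence matchable); so it suffices to prove the following one-step lemma and then apply it along this sequence: \emph{if $v$ is a degree-two vertex of a \mcg{} $G$ with distinct neighbours $u$ and $w$, and $G'$ is obtained from $G$ by bicontracting $v$, identifying $u,v,w$ into a single new vertex $z$, then $G$ contains an even conformal bicycle if and only if $G'$ does.} Granting this, the fact that a \mcg{} is \PMc{} exactly when it has no even conformal bicycle (Theorem~\ref{thm:PMc-iff-no-even-conformal-bicycle}), applied both to $G$ and to its retract, delivers the proposition.

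The tool throughout is the standard correspondence between perfect matchings of $G$ and of $G'$. The edges of $G'$ that avoid $z$ are precisely the edges of $G$ that avoid $\{u,v,w\}$, while each edge of $G'$ at $z$ is the image of an edge of $G$ at $u$ or at $w$; and a perfect matching of $G$ using $vu$ (respectively $vw$) corresponds to one of $G'$ in which $z$ is covered by the image of the edge that covered $w$ (respectively $u$). This is what lets me transport the ``conformal'' part of a conformal bicycle between $G$ and $G'$ in each of the cases below.

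For the forward direction, take an even conformal bicycle $(C_1,C_2)$ of $G$ and a perfect matching $N$ of $G-V(C_1)-V(C_2)$. As $v$ has degree two, either $v$ lies on neither cycle, or the whole triple $\{u,v,w\}$ lies on one of them, say on $C_1$, which then traverses the path $uvw$. In the former situation at most one of $u,w$ can lie on $C_1\cup C_2$ (otherwise $v$ would be isolated in $G-V(C_1)-V(C_2)$): if neither does, then $C_1,C_2$ are already vertex-disjoint even cycles of $G'$ and the matching correspondence converts $N$ into a perfect matching of $G'-V(C_1)-V(C_2)$; if, say, $u$ lies on $C_1$, then $N$ must cover $v$ by the edge $vw$, so relabelling $u$ as $z$ on $C_1$ produces an even cycle $C_1'$ of $G'$ disjoint from $C_2$, and $N\setminus\{vw\}$ (which avoids $z$) is a perfect matching of $G'-V(C_1')-V(C_2)$. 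In the latter situation I would contract the subpath $uvw$ of $C_1$ to the single vertex $z$, obtaining an even cycle $C_1'$ of $G'$ of length $|C_1|-2\ge 2$, keep $C_2$ unchanged, and note that $N$, now avoiding $z$, is already a perfect matching of $G'-V(C_1')-V(C_2)$. The converse is symmetric: from an even conformal bicycle $(C_1',C_2')$ of $G'$ one distinguishes whether $z$ avoids both cycles or lies on one of them, and in the latter case splits further according to whether the two edges of that cycle at $z$ are images of edges at $u$, at $w$, or one of each; expanding $z$ back into $u$, into $w$, or into the path $uvw$ accordingly --- adjoining $vw$ or $vu$ to the matching in the first two sub-cases --- recovers an even conformal bicycle of $G$.

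The bookkeeping (which vertices get deleted, and the verification that the exhibited edge set is a perfect matching of exactly the right subgraph) is routine but must be done in every case; I expect the genuinely delicate points to be two. First, the parity check in the cases where $z$, or the triple $\{u,v,w\}$, sits on a cycle: here one uses that passing between the path $uvw$ and the single vertex $z$ changes a cycle's length by exactly two, so evenness is preserved, and one must accept the degenerate outcome (when $|C_1|=4$) in which $C_1'$ is a pair of parallel edges, a digon --- a legitimate even cycle precisely because the graphs in question are permitted to have multiple edges. Second, the small but necessary observation that $u$ and $w$ cannot lie on two different cycles of a conformal bicycle when $v$ lies on neither.
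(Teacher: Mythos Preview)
Your proposal is correct and follows exactly the approach the paper indicates: it states that the proposition is ``easily proved using the characterizations provided by Theorems~\ref{thm:PMc-iff-no-even-conformal-bicycle}~and~\ref{thm:BvN-iff-no-odd-conformal-bicycle}'' and gives no further details, so your case analysis of a single bicontraction is precisely the routine verification the paper leaves to the reader. The delicate points you flag (the digon case when $|C_1|=4$, and the impossibility of $u,w$ lying on the bicycle while $v$ does not) are handled correctly.
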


\begin{prop}
\label{prop:BvN-retract}
A \mcg\ is \BvN\ if and only if its retract is \BvN.
\end{prop}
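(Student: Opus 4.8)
The plan is to reduce everything to Theorem~\ref{thm:BvN-iff-no-odd-conformal-bicycle}, which characterises matchable \BvN\ graphs as exactly those with no odd conformal bicycle. The retract $\widehat G$ of a \mcg\ $G$ is obtained by a finite sequence $G = G_0, G_1, \dots, G_t = \widehat G$ of bicontractions, each performed on a \mcg\ and producing a \mcg\ (the vertex count drops by two at each step, so the sequence is finite). Hence it suffices to prove the single-step statement: \emph{if $G'$ is obtained from a \mcg\ $G$ by bicontracting a single degree-two vertex $v$, then $G$ contains an odd conformal bicycle if and only if $G'$ does}; chaining this along the sequence and invoking Theorem~\ref{thm:BvN-iff-no-odd-conformal-bicycle} (applicable since every $G_i$ is matchable) then proves the proposition.

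Fix $v$ with its two distinct neighbours $u,w$, and let $z$ be the vertex of $G'$ obtained by contracting $uv$ and $vw$. I would first record two facts. \textbf{(a)} Since $G$ is matching covered, $u$ and $w$ are non-adjacent (an edge $uw$ could lie in no perfect matching, as it would leave $v$ unmatched); consequently $E(G)\setminus\{uv,vw\}$ corresponds bijectively to $E(G')$ with no edge becoming a loop, and every cycle of $G$ through $v$ passes through both $u$ and $w$ and --- its $u$--$w$ portion avoiding $v$ having at least two edges --- has length at least $4$, hence at least $5$ if it is odd. \textbf{(b)} For any graph $H$ in which $v$ still has degree two with distinct neighbours $u,w$, the graph $H$ is matchable if and only if its bicontraction of $v$ is (the standard lift/project correspondence between their perfect matchings). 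In particular, whenever a conformal bicycle avoids $u,v,w$, deleting its vertices from $G$ and then bicontracting $v$ gives the same graph as deleting them from $G'$.

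Now I would carry out cycle surgery in both directions, organised by the position of $v,u,w$ relative to the two cycles. \emph{Forward.} Let $(C_1,C_2)$ be an odd conformal bicycle of $G$. If $v$ lies on neither cycle then, as $G-V(C_1)-V(C_2)$ has a perfect matching and so no isolated vertex, at most one of $u,w$ meets $C_1\cup C_2$; if neither does, fact~(b) turns the perfect matching of $G-V(C_1)-V(C_2)$ into one of $G'-V(C_1)-V(C_2)$ and we are done; if exactly one, say $u\in V(C_j)$, replace $u$ by $z$ in $C_j$, keep the other cycle, and check that the new pair is a conformal bicycle of $G'$, using that $vw$ belongs to every perfect matching of $G-V(C_1)-V(C_2)$. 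If instead $v\in V(C_1)$, then by fact~(a) $C_1$ has a subpath $u\,v\,w$ and length at least $5$; contracting that subpath to $z$ gives a loopless odd cycle $C_1'$ of length $|C_1|-2\ge 3$, and $(C_1',C_2)$ is a conformal bicycle of $G'$ since $G'-V(C_1')-V(C_2)=G-V(C_1)-V(C_2)$. \emph{Backward.} Given an odd conformal bicycle $(C_1',C_2')$ of $G'$: if $z$ lies on neither cycle, apply fact~(b); if $z\in V(C_1')$, look at the two cycle-edges at $z$ --- if both come from edges of $G$ at $u$ (or both at $w$), replace $z$ by $u$ (resp.\ $w$) to obtain $C_1$ and adjoin $vw$ to the perfect matching of the deleted graph; if one comes from $u$ and the other from $w$, replace $z$ by the subpath $u\,v\,w$, lengthening $C_1'$ by $2$. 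In every branch one checks the relevant deleted graph is matchable, completing the step.

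The bulk of the argument is routine bookkeeping --- verifying in each branch that, after the surgery, the graph on the surviving vertices is literally equal to, or (via fact~(b)) matching-equivalent to, the corresponding deleted graph on the other side, and that the modified cycles are genuine vertex-disjoint cycles of the right parity. The one load-bearing point, which I expect to be the main obstacle, is fact~(a): without matching covered-ness a triangle $uvw$ would collapse to a loop at $z$ and the case $v\in V(C_1)$ would fail, so it is precisely this hypothesis that forces $|C_1|\ge 5$ and keeps every contracted cycle loopless. A secondary point needing care is the sub-case where $u$ or $w$ (but not $v$) lies on a cycle: there $v$ has degree one in the deleted graph, $vw$ is forced, and one must use that the deleted graph is matchable, not merely that it contains no odd cycle.
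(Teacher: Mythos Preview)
Your proposal is correct and follows exactly the route the paper intends: the paper does not spell out a proof but simply remarks that Propositions~\ref{prop:PMc-retract} and~\ref{prop:BvN-retract} are ``easily proved using the characterizations provided by Theorems~\ref{thm:PMc-iff-no-even-conformal-bicycle} and~\ref{thm:BvN-iff-no-odd-conformal-bicycle}, respectively.'' Your reduction to a single bicontraction step, together with the cycle-surgery case analysis transferring odd conformal bicycles back and forth, is precisely the argument being gestured at, and your observation that matching-coveredness forces $u\not\sim w$ (so no triangle collapses to a loop) is the right non-trivial ingredient.
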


Consequently, a \mcg\ $G$ is \BvN\ and \PMc\ if and only if the retract of $G$ has each of these properties.
Also, it is easily verified that, if $G$ is a \mcg\ of order at least four, and is not isomorphic to a simple cycle graph,
then the retract of $G$ has minimum degree three or more.
Thus, in order to characterize the {\mcg}s that are \BvN\ as well as \PMc, we may restrict attention to {\mcg}s
that have minimum degree three or more.

\smallskip
The following characterization of \PMc\ bipartite {\mcg}s was obtained by
Wang, Lin, Carvalho, Lucchesi, Sanjith and Little \cite{wlclsl13}.

\begin{thm}
\label{thm:bip-PMc-graphs}
The graphs $K_2$ (with multiple edges) and $K_{3,3}$ are the only \PMc\ \mbox{bipartite} {\mcg}s that have minimum degree
three or more.
\end{thm}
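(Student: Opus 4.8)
The plan is to prove the harder implication of Theorem~\ref{thm:PMc-iff-no-even-conformal-bicycle} in contrapositive form: every bipartite matching covered graph $G$ with $\delta(G)\ge 3$ other than $K_2$ (with multiple edges) and $K_{3,3}$ contains an even conformal bicycle. Since a bipartite graph has only even cycles, this reduces to producing two vertex-disjoint cycles $C_1,C_2$ with $G-V(C_1)-V(C_2)$ matchable. I would argue by induction on $|V(G)|$, working with a minimum counterexample $G$, and lean on two elementary facts: (a) if $H$ is a conformal subgraph of $G$, then an even conformal bicycle of $H$ is one of $G$ as well, since $G-V(H)$ is matchable; and (b) an even conformal bicycle is exactly a perfect matching $M$ together with two vertex-disjoint $M$-alternating cycles (extend a perfect matching of $G-V(C_1)-V(C_2)$ by perfect matchings of $C_1$ and of $C_2$, and conversely).

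I would first dispose of multiple edges and small orders. If $G$ has a maximal class $D$ of parallel edges joining $u$ and $v$, then matching-coveredness forces every edge at $v$ into $D$ --- otherwise that edge lies in no perfect matching --- and hence $G=K_2$ with multiple edges; so the minimum counterexample is simple, apart from the configuration in which the underlying simple graph is $K_{3,3}$ but $G\ne K_{3,3}$, where a parallel pair together with a $4$-cycle of $K_{3,3}-\{a_1,b_1\}$ is an even conformal bicycle, and apart from multiplicities hidden under later bicontractions, which are handled the same way. A $2+2$ matching covered graph must contain all four bundles of edges, so $\delta\ge3$ forces two opposite bundles to be non-simple, giving two vertex-disjoint digons; and the only simple $3+3$ graph with $\delta\ge3$ is $K_{3,3}$. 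Hence the minimum counterexample $G$ is simple, bipartite with parts of common size $n$, and $n\ge4$.

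The structural heart is then this. If $G$ has a nontrivial tight cut $\partial(S)$, bicontract each shore to obtain smaller bipartite matching covered graphs $G_1,G_2$; after bicontracting any degree-two vertex so created, one of them --- say $G_1$ --- still has $\delta\ge3$ and is neither $K_2$ with multiple edges nor $K_{3,3}$, so by the induction hypothesis it has an even conformal bicycle, which lifts back across $\partial(S)$ by the defining tight-cut property (perfect matchings extend across the cut). So we may assume $G$ is a brace. For braces I would invoke McCuaig's brace-generation theorem: every brace arises from $C_4$ and $K_{3,3}$ by repeatedly adding edges and bi-splitting vertices. Adding an edge cannot destroy two vertex-disjoint cycles or the matchability of their complement, and bi-splitting a vertex replaces it by an edge --- so a cycle through that vertex survives, merely lengthened, and its complement stays matchable; thus ``has an even conformal bicycle'' propagates up any generation sequence. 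It remains to verify the property for the braces with $\delta\ge3$ obtained in one step from $C_4$ or from $K_{3,3}$, essentially the braces on eight vertices (e.g.\ $K_{4,4}$ and the cube each contain a conformal $C_4\cup C_4$), and the induction closes.

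The step I expect to be the main obstacle is exactly this brace analysis --- equivalently, the claim that $K_{3,3}$ is the only PM-compact brace of minimum degree at least three. One must show both that bi-splits and edge-additions cannot \emph{remove} an even conformal bicycle once present --- tracking a specific pair $C_1,C_2$ and a matching of the complement through each operation --- and that each relevant small brace really has one. For the latter the natural tool is the digraph $D_M$ formalizing (b): vertex set one side $A$ of the bipartition, an arc $a\to a'$ whenever $a$ is adjacent to the $M$-mate of $a'$; then $M$-alternating cycles are the directed cycles of $D_M$, $\delta(G)\ge3$ makes every in- and out-degree at least two, and what is wanted is a perfect matching $M$ for which $D_M$ has two vertex-disjoint directed cycles. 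A single $D_M$ need not have this (all its directed cycles can pass through one vertex), so the crux is to re-route $M$, using matching-coveredness, to a matching whose digraph is sufficiently spread out; carrying that rerouting out for all braces with $n\ge4$ --- not merely those on eight vertices --- would prove the theorem outright, bypassing McCuaig's theorem, and is the genuinely delicate point.
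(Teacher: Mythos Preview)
The paper does not prove Theorem~\ref{thm:bip-PMc-graphs}; it is quoted from Wang, Lin, Carvalho, Lucchesi, Sanjith and Little~\cite{wlclsl13}, so there is no in-paper argument to compare against. Judged on its own merits, your outline has genuine gaps beyond the brace analysis you already flag.

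Your treatment of multiple edges is simply wrong. The assertion that if $D$ is a class of parallel edges joining $u$ and $v$ then ``matching-coveredness forces every edge at $v$ into $D$ --- otherwise that edge lies in no perfect matching'' is false: double any edge of $K_{3,3}$ and every edge still lies in a perfect matching. The correct argument is direct. If $|V(G)|\ge 4$ and $e,e'$ are parallel between $u\in A$ and $v\in B$, let $C_1$ be the $2$-cycle $\{e,e'\}$. Each vertex of $G-u-v$ loses at most one neighbour, so $\delta(G-u-v)\ge 2$; and $G-u-v$ has a perfect matching $M'$ (extend $e$ to a perfect matching of $G$ and delete $e$). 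Orienting $M'$-edges from $B$ to $A$ and the rest from $A$ to $B$ gives out-degree at least one everywhere, hence an $M'$-alternating cycle $C_2$, and $(C_1,C_2)$ is an even conformal bicycle. Your exception clause for ``underlying simple graph $K_{3,3}$'' does not rescue the general claim.

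Your tight-cut reduction also has a hole. You assert that after contracting each shore and retracting, ``one of them --- say $G_1$ --- still has $\delta\ge 3$ and is neither $K_2$ with multiple edges nor $K_{3,3}$'', but you never justify this. Both $C$-contractions are \PMc\ (Proposition~\ref{prop:PMc-inheritance-tight-cut-contractions}), so by minimality both could perfectly well retract to $K_{3,3}$; you then owe a direct construction of a conformal bicycle in that splicing --- this is exactly the bipartite analogue of Case~3 in the paper's proof of Theorem~\ref{thm:main}, and it does not come for free. Likewise, ``lifts back across $\partial(S)$'' needs an argument when the bicycle in $G_1$ passes through the contraction vertex: you must route that cycle through the other shore and simultaneously match the leftover vertices there, which uses matching-coveredness of $G_2$ but is not literally ``perfect matchings extend across the cut''. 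These issues are repairable, but as written the reduction to braces does not stand.
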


\smallskip
The {\it odd wheel} $W_{2k+1}$, for $k \geq 1$, is defined to be the join of an odd cycle $C_{2k+1}$ and $K_1$.
The smallest odd wheel is $K_4$.
If $k \geq 2$, then $W_{2k+1}$ has exactly one vertex of degree $2k+1$, called its {\it hub}, and the edges incident
at the hub are called its {\it spokes}. The remaining $2k+1$ vertices lie on a cycle, called the {\it rim}; vertices
and edges of the {\it rim} are referred to as {\it rim vertices} and {\it rim edges}, respectively.

\smallskip
Each cycle of $W_{2k+1}$ (possibly with multiple spokes), except for the rim, contains the hub;
consequently this graph does not have two vertex-disjoint cycles, whence it is \BvN\ as well as \PMc.

\begin{figure}[!htb]
\centering
\begin{tikzpicture}[scale=1]

\draw (0.75,1.5) -- (-1.5,0) -- (-0.75,1.5) -- (0,0) -- (0.75,1.5) -- (1.5,0) -- (-0.75,1.5);

\draw (0,0) -- (0,-1);
\draw (-1.5,0) -- (-1,-2);
\draw (1.5,0) -- (1,-2);

\draw (0,-1) -- (-1,-2) -- (1,-2) -- (0,-1);

\draw (0.75,1.5)node{}node[above,nodelabel]{$a_2$};
\draw (-0.75,1.5)node{}node[above,nodelabel]{$a_1$};

\draw (0,0)node{}node[left,nodelabel]{$b_2$};
\draw (-1.5,0)node{}node[left,nodelabel]{$b_1$};
\draw (1.5,0)node{}node[right,nodelabel]{$b_3$};

\draw (0,-1)node{}node[below,nodelabel]{$t_2$};
\draw (-1,-2)node{}node[left,nodelabel]{$t_1$};
\draw (1,-2)node{}node[right,nodelabel]{$t_3$};

\draw (0,-2.5)node[nodelabel]{(a)};
\end{tikzpicture}
\hspace*{1in}
\begin{tikzpicture}[scale=1]
\draw (-0.75,1.5) -- (0.75,1.5);

\draw (0.75,1.5) -- (-1.5,0) -- (-0.75,1.5) -- (0,0) -- (0.75,1.5) -- (1.5,0) -- (-0.75,1.5);

\draw (0,0) -- (0,-1);
\draw (-1.5,0) -- (-1,-2);
\draw (1.5,0) -- (1,-2);

\draw (0,-1) -- (-1,-2) -- (1,-2) -- (0,-1);

\draw (0.75,1.5)node{}node[above,nodelabel]{$a_2$};
\draw (-0.75,1.5)node{}node[above,nodelabel]{$a_1$};

\draw (0,0)node{}node[left,nodelabel]{$b_2$};
\draw (-1.5,0)node{}node[left,nodelabel]{$b_1$};
\draw (1.5,0)node{}node[right,nodelabel]{$b_3$};

\draw (0,-1)node{}node[below,nodelabel]{$t_2$};
\draw (-1,-2)node{}node[left,nodelabel]{$t_1$};
\draw (1,-2)node{}node[right,nodelabel]{$t_3$};

\draw (0,-2.5)node[nodelabel]{(b)};
\end{tikzpicture}
\vspace*{-0.2in}
\caption{(a) $K_4 \odot K_{3,3}$ ; (b) the Murty graph}
\label{fig:K4_splice_K33_and_Murty_graph}
\end{figure}
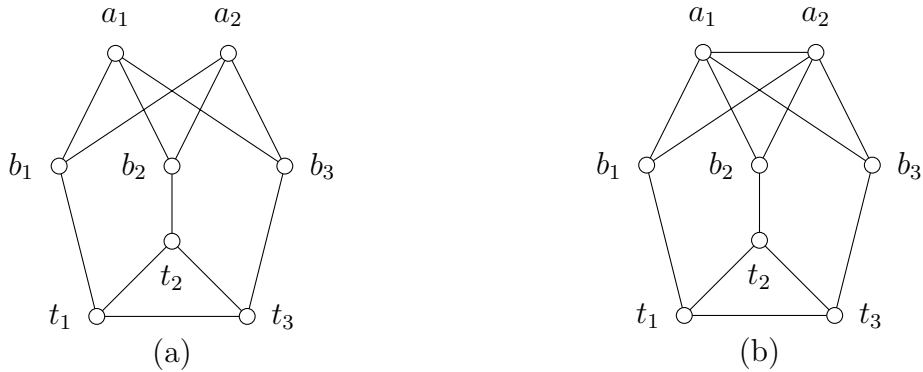

A vertex of a graph is {\it cubic} if its degree is precisely three;
otherwise it is {\it noncubic}.
A graph is {\it cubic} if all of its vertices are cubic.

\smallskip
We now describe two {\mcg}s that play an important role in our work.
We obtain these graphs from $K_{3,3}$ with color classes $\{a_1,a_2,a_3\}$ and $\{b_1,b_2,b_3\}$.
The first graph, denoted by $K_4 \odot K_{3,3}$, is cubic, and is obtained from $K_{3,3}$ by replacing the vertex~$a_3$
by a triangle, as shown in Figure~\ref{fig:K4_splice_K33_and_Murty_graph}(a). This graph is a ``splicing'' of $K_4$ and $K_{3,3}$.
The second graph, shown in Figure~\ref{fig:K4_splice_K33_and_Murty_graph}(b),
is obtained from $K_4 \odot K_{3,3}$ by adding an edge joining $a_1$~and~$a_2$. This graph has precisely two noncubic vertices,
and we shall refer to it as the Murty graph for reasons explained later.
It is worth noting that, for both graphs shown in Figure~\ref{fig:K4_splice_K33_and_Murty_graph},
the automorphism group has precisely three orbits --- $\{a_1,a_2\}, \{b_1,b_2,b_3\}$ and $\{t_1,t_2,t_3\}$.

\smallskip
The reader may check that the Murty graph (possibly with multiple
edges joining the two noncubic vertices) does not contain a conformal bicycle. Consequently, $K_4 \odot K_{3,3}$
does not contain a conformal bicycle either.
Our main result is the following.

\begin{thm}{\sc [The Main Theorem]}
\label{thm:main}
For a \mcg\ $G$, that has minimum degree three or more, the following are equivalent.
\begin{enumerate}[(i)]
\item $G$ is \BvN\ as well as \PMc.
\item $G$ does not contain a conformal bicycle.
\item $G$ is one of the following: (a) $K_2$ (with multiple edges), (b) $K_{3,3}$, (c) $K_4$ (up to multiple edges such that
it does not have two vertex-disjoint cycles), (d) an odd wheel of order six or more (up to multiple spokes), (e) $K_4 \odot K_{3,3}$,
(f) the Murty graph (up to multiple edges joining the two noncubic vertices).
\end{enumerate}
\end{thm}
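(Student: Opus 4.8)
The plan is to establish (i)$\,\Leftrightarrow\,$(ii) directly and then to close the cycle through (ii)$\,\Rightarrow\,$(iii) and (iii)$\,\Rightarrow\,$(ii). For the first equivalence, observe that if $(C_1,C_2)$ is a conformal bicycle of $G$ then $G-V(C_1)-V(C_2)$ is matchable, so $|V(C_1)|+|V(C_2)|$ is even and $C_1,C_2$ have the same parity; hence every conformal bicycle is either odd or even. Thus $G$ has no conformal bicycle if and only if it has neither an odd nor an even one, which by Theorems~\ref{thm:BvN-iff-no-odd-conformal-bicycle} and~\ref{thm:PMc-iff-no-even-conformal-bicycle} says exactly that $G$ is \BvN\ as well as \PMc. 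The implication (iii)$\,\Rightarrow\,$(ii) is a short check: $K_2$ and $K_{3,3}$ cannot contain two vertex-disjoint cycles (the former is too small, the latter has girth four on only six vertices); $K_4$, under the stated multiplicity restriction, has no two vertex-disjoint cycles by hypothesis; every cycle of an odd wheel other than its rim passes through the hub; and the absence of a conformal bicycle in $K_4\odot K_{3,3}$ and in the Murty graph was recorded following Figure~\ref{fig:K4_splice_K33_and_Murty_graph}.

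The substance of the theorem is (ii)$\,\Rightarrow\,$(iii): every \mcg\ $G$ with $\delta(G)\ge 3$ and no conformal bicycle lies in the list. I would argue by induction on $|V(G)|$. If $G$ is bipartite it has no odd cycle, so ``no conformal bicycle'' coincides with ``no even conformal bicycle'', that is, $G$ is \PMc; Theorem~\ref{thm:bip-PMc-graphs} then forces $G$ to be $K_2$ (with multiple edges) or $K_{3,3}$, which are cases (a) and (b). So assume $G$ is non-bipartite. If $G$ has no non-trivial tight cut then $G$ is a brick; otherwise fix a non-trivial tight cut $\partial(S)$ and let $G_1,G_2$ be the two matching covered graphs obtained by contracting, respectively, $\overline S$ and $S$ to a single vertex.

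In the tight-cut case, the key step --- and the first of two points that I expect to require real care --- is to show that if $G$ has no conformal bicycle then neither does $G_1$ nor $G_2$: a conformal bicycle of $G_i$ that avoids the contraction vertex is already a conformal bicycle of $G$, whereas one that uses it must be rerouted through the contracted shore, exploiting the matching-coveredness of that shore to keep the two cycles vertex-disjoint and the complement matchable, possibly after bicontracting degree-two vertices created in the process (which is legitimate by Propositions~\ref{prop:PMc-retract} and~\ref{prop:BvN-retract}). Granted this, the inductive hypothesis applies to the retracts of $G_1$ and $G_2$, and one then checks which pairs of graphs from the list can be glued back across a tight cut without creating a conformal bicycle. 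Since splicing two bricks, or a brick and a non-bipartite graph, always creates a conformal bicycle, whereas splicing $K_4$ with $K_{3,3}$ does not --- precisely because the bipartite side admits no spanning cycle --- this analysis yields $G\cong K_4\odot K_{3,3}$, case (e), the remaining possibilities collapsing to the degenerate $K_2$ and $K_4$ situations of cases (a) and (c).

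Finally, suppose $G$ is a brick. Here I would invoke the \NT\ generation theorem: every brick is obtained from $K_4$, the prism $\overline{C_6}$, or the Petersen graph by repeated application of the brick-generation operations. Both $\overline{C_6}$ and the Petersen graph contain an odd conformal bicycle --- the two triangles of $\overline{C_6}$, and a pair of vertex-disjoint $5$-cycles of the Petersen graph, each such pair spanning all vertices and so leaving the empty (vacuously perfect) matching behind --- and one checks that applying a generation operation to a brick that already contains a conformal bicycle yields a brick that contains one; hence any conformal-bicycle-free brick is generated from $K_4$ alone. The main obstacle of the proof is then to show that the only bricks so generated that remain conformal-bicycle-free are $K_4$ itself, the odd wheels $W_{2k+1}$ with $k\ge 2$, and the Murty graph; this is done by tracking, step by step, how each generation operation acts on short pairs of vertex-disjoint cycles together with a matching completing them, the delicate sub-case being the one isolating the Murty graph as the unique sporadic example. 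Combining the bipartite, tight-cut, and brick cases yields (iii) and completes the cycle of implications.
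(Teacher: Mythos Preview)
Your scaffolding matches the paper's: (i)$\Leftrightarrow$(ii) from Theorems~\ref{thm:BvN-iff-no-odd-conformal-bicycle} and~\ref{thm:PMc-iff-no-even-conformal-bicycle}, (iii)$\Rightarrow$(ii) by inspection, and (ii)$\Rightarrow$(iii) split into the bipartite, tight-cut, and brick cases. The brick case you sketch is also the paper's strategy, though you conflate the thin-edge theorem of Carvalho--Lucchesi--Murty (base cases $K_4$, $\overline{C_6}$, Petersen) with the strictly-thin-edge theorem of Norine--Thomas (base cases the infinite \NT\ families); the paper uses the latter precisely so that all intermediate bricks stay simple. Your acknowledgement that the step-by-step tracking of the expansion operations is ``the main obstacle'' is accurate --- that analysis occupies most of Section~\ref{sec:bricks} and includes reduction lemmas that push expansions over large wheels down to expansions over $W_5$ or $W_7$ --- so this part is not so much wrong as unwritten.

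There is, however, a genuine gap in your tight-cut case. The claim that $K_4\odot K_{3,3}$ survives ``precisely because the bipartite side admits no spanning cycle'' is false: $K_{3,3}$ is Hamiltonian. So the heuristic you offer for the splicing analysis does not work, and nothing replaces it. Even after one argues (as the paper does, via Theorem~\ref{thm:BvN-mcg-characterization}) that $G$ is a near-brick and hence the bipartite $C$-contraction must be $K_{3,3}$, one still has to rule out the splicings in which the non-bipartite side is an odd wheel $W_{2k+1}$ with $k\ge 2$, or $K_4\odot K_{3,3}$, or the Murty graph; each of these does fail, but not by any uniform principle --- the paper disposes of them by exhibiting an explicit even conformal bicycle in every case. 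You also need to arrange that both $C$-contractions have minimum degree at least three so that the induction hypothesis applies to them directly; the paper secures this with a separate lemma (Lemma~\ref{lem:mcg-min-degree-three-or-more-tight-cut}) showing that some nontrivial tight cut has at least three edges, whereas passing to retracts as you suggest would decouple the $C$-contractions from $G$ and complicate the reconstruction.
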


\section{Matching covered graphs}

For a cut $C:=\partial(X)$ of a graph $G$, we denote the graph obtained by contracting the shore $X$ to a single vertex $x$ by
$G/(X \rightarrow x)$. In case the label of the contraction vertex $x$ is irrelevant, we simply write $G/X$.
The graphs $G/X$ and $G/\overline{X}$ are called the {\it $C$-contractions} of $G$, and
we say that $G$ is a {\it splicing} of these two graphs.

\subsection{Tight cut decomposition}

Let $G$ be a \mcg. A cut $C$ is a {\it tight cut} if $|M \cap C|=1$ for every perfect matching $M$ of $G$.
It is easily verified that if $C$ is a nontrivial tight cut of $G$, then each $C$-contraction is a \mcg\
that has strictly fewer vertices than $G$. If either of the $C$-contractions has a nontrivial tight cut, then that graph can be further
decomposed into even smaller {\mcg}s. We can repeat this procedure until we obtain a list of {\mcg}s, each of which
is free of nontrivial tight cuts. This procedure is known as a {\it tight cut decomposition} of $G$.

\smallskip
A matching covered graph free of nontrivial tight cuts is called a {\it brace} if it is bipartite; otherwise it is called a {\it brick}.
Thus a tight cut decomposition of a \mcg\ results in a list of bricks and braces. For example, a tight cut decomposition
of the graph $K_4 \odot K_{3,3}$, shown in Figure~\ref{fig:K4_splice_K33_and_Murty_graph}(a),
yields the brick $K_4$ and the brace $K_{3,3}$.
Lov{\'a}sz \cite{lova87} proved the following remarkable result.

\begin{thm}
Any two tight cut decompositions of a \mcg\ yield the same list of bricks and braces (except possibly for multiplicities
of edges).
\end{thm}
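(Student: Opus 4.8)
The plan is to prove this by induction on $|V(G)|$, with the engine being a study of how two tight cuts interact when their shores cross. Recall first a parity observation: if $\partial(X)$ is a tight cut then, since every perfect matching meets it in exactly one edge, both $|X|$ and $|\overline{X}|$ are odd. The base case of the induction is a \mcg\ with no nontrivial tight cut, which is by definition a single brick or brace; here every tight cut decomposition is trivial and there is nothing to prove. For the inductive step I would fix an arbitrary nontrivial tight cut $\partial(X)$ of $G$ and compare the decomposition that begins by splitting along $\partial(X)$ (and then, by the inductive hypothesis applied to the smaller graphs $G/X$ and $G/\overline{X}$, proceeds canonically) against an arbitrary tight cut decomposition $\mathcal{D}$ that begins by splitting along some other tight cut $\partial(Y)$.

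The central tool is the following uncrossing lemma: if $\partial(X)$ and $\partial(Y)$ are tight cuts whose shores cross --- meaning each of $X\cap Y$, $X\setminus Y$, $Y\setminus X$ and $\overline{X\cup Y}$ is nonempty --- then a suitable ``uncrossed'' pair of cuts is again tight. Concretely, the parity of $|X\cap Y|$ splits into two cases: if $|X\cap Y|$ is odd then $\partial(X\cap Y)$ and $\partial(X\cup Y)$ are tight, while if $|X\cap Y|$ is even then $\partial(X\setminus Y)$ and $\partial(Y\setminus X)$ are tight. I would prove this by a short counting argument: write $P=X\cap Y$, $Q=X\setminus Y$, $R=Y\setminus X$, $S=\overline{X\cup Y}$, and for a perfect matching $M$ let $m_{PQ}, m_{PR}, \dots$ count its edges between the indicated regions. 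Tightness of $\partial(X)$ and of $\partial(Y)$ gives $m_{PR}+m_{PS}+m_{QR}+m_{QS}=1$ and $m_{PQ}+m_{PS}+m_{QR}+m_{RS}=1$. In the odd case $P$ and $S$ have odd size, so each must send at least one matching edge outward; combining these inequalities with the two equalities forces exactly one edge to leave $P$ and exactly one to leave $S$ in every $M$, which is precisely the assertion that $\partial(X\cap Y)$ and $\partial(X\cup Y)$ are tight. The even case follows by applying the odd case to the pair $(X,\overline{Y})$, since $\partial(\overline{Y})=\partial(Y)$.

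With the uncrossing lemma in hand, the inductive step is a case analysis on the relationship between $\partial(X)$ and $\partial(Y)$. If they coincide as cuts, the two decompositions begin identically and induction on both contractions finishes the job. If their shores are laminar (one contained in another, after possibly complementing), the two cuts ``commute'': each survives as a tight cut in the appropriate contraction obtained from the other, so the two splits can be performed in either order and we again reduce to smaller graphs via the inductive hypothesis. The remaining, essential case is that $\partial(X)$ and $\partial(Y)$ cross; here I would invoke the uncrossing lemma to replace the crossing pair by a laminar configuration, and then argue that routing the decomposition through the uncrossed tight cuts produces the same multiset of bricks and braces as routing it through $\partial(X)$ or through $\partial(Y)$. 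The bookkeeping here relies on the companion fact that the tight cuts of a contraction $G/\overline{X}$ are exactly the images of those tight cuts of $G$ that do not cross $\partial(X)$, so that a decomposition of $G$ and the induced decompositions of its contractions are in faithful correspondence.

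I expect the main obstacle to be not the uncrossing lemma itself --- whose proof is the self-contained counting sketched above --- but the organisation of the crossing case: one must show that after uncrossing, the two a-priori-different decompositions can be massaged into a common refinement so that the inductive hypothesis applies to genuinely smaller graphs and delivers identical lists. A secondary point requiring care is the parenthetical ``except possibly for multiplicities of edges'': contracting a shore can create parallel edges, and two decompositions may present a given brick or brace with different edge multiplicities, so the correspondence must be read at the level of underlying simple graphs (equivalently, up to parallel edges), exactly as the statement allows.
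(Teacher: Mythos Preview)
The paper does not prove this theorem; it is stated as a result of Lov\'asz \cite{lova87} and used as a black box, so there is no ``paper's own proof'' against which to compare your attempt.

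That said, your sketch follows the standard uncrossing route that underlies Lov\'asz's original argument: the parity/counting proof of the uncrossing lemma is correct, and the laminar case is straightforward. You have also correctly identified where the real work lies. In the crossing case, simply producing an uncrossed pair is not enough; you must show that the two full decompositions --- one starting at $\partial(X)$, the other at $\partial(Y)$ --- can each be rerouted through the uncrossed cuts so as to land on strictly smaller instances to which the inductive hypothesis applies, and that the resulting multisets match. This requires a careful well-founded measure (or a more structural argument about laminar families of tight cuts), and your proposal acknowledges but does not supply it. For the purposes of this paper, citing Lov\'asz is the intended treatment.
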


In particular, any two tight cut decompositions of a \mcg\ $G$ yield the same number of bricks; this number is denoted by $b(G)$.
We remark that $G$ is bipartite if and only if $b(G)=0$. We say that $G$ is a {\it near-brick} if $b(G)=1$.
(For instance, \mbox{$K_4 \odot K_{3,3}$}.)
Thus,
if $C$ is a nontrivial tight cut of a near-brick $G$, then one of its $C$-contractions is bipartite,
whereas the other $C$-contraction is a near-brick (on fewer vertices).

\smallskip
It is worth noting that each vertex of a brick, or of a brace on at least six vertices, has at least three distinct neighbours.

\subsection{Solid graphs}

The tight cut decomposition can be performed in polynomial-time, whence the bricks and braces
of a \mcg\ can be computed in polynomial-time.
Carvalho, Lucchesi and Murty \cite{clm04} proved the following ---
which implies that, in order to characterize \BvN\ graphs, it suffices to characterize \BvN\ bricks.

\begin{thm}
\label{thm:BvN-mcg-characterization}
A matching covered graph $G$ is \BvN\ if and only if either $G$ is bipartite or otherwise $G$ is a near-brick whose
unique brick is \BvN.
\end{thm}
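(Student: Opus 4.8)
The plan is to argue throughout in terms of conformal bicycles. By Theorem~\ref{thm:BvN-iff-no-odd-conformal-bicycle}, a \mcg\ is \BvN\ exactly when it has no odd conformal bicycle, and bipartite graphs trivially have none (they contain no odd cycle); so the statement is equivalent to saying that a \mcg\ $G$ has an odd conformal bicycle if and only if $b(G)\ge 2$, or else $b(G)=1$ and the unique brick of $G$ has one. I would prove this by induction on $|V(G)|$. In the base case $G$ is a brick or a brace: a brace is bipartite with $b(G)=0$ and so has no odd conformal bicycle, while a brick is its own unique brick and there is nothing to prove. For the inductive step, fix a nontrivial tight cut $\partial(X)$ and set $G_1:=G/(X\to x)$ and $G_2:=G/(\overline X\to\overline x)$, both {\mcg}s on fewer vertices; performing a tight cut decomposition of $G$ that begins with $\partial(X)$ and invoking Lov{\'a}sz's theorem shows that the bricks of $G$ are those of $G_1$ together with those of $G_2$, so $b(G)=b(G_1)+b(G_2)$.

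The engine is a \emph{lifting lemma}: if $\partial(Y)$ is a tight cut of a \mcg\ $H$ and the contraction $H_1:=H/(Y\to y)$ has an odd conformal bicycle $(D_1,D_2)$, then so does $H$. At most one of $D_1,D_2$ passes through the contraction vertex $y$. If neither does, then $D_1,D_2$ are cycles of $H$ lying in $\overline Y$, and the perfect matching of $H_1-V(D_1)-V(D_2)$ witnessing conformality --- which uses exactly one edge at $y$, necessarily a cut edge $w_0z_0$ --- extends to $H$ by replacing that edge by $w_0z_0$ itself and adjoining a perfect matching of $H[Y]-w_0$ (which exists since $\partial(Y)$ is tight and $H$ is matching covered). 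If, say, $D_1$ passes through $y$, entering and leaving it along cut edges $w_1z_1,w_2z_2$ with $w_1,w_2\in Y$, then $H[Y]-w_1$ and $H[Y]-w_2$ each have a perfect matching, and the symmetric difference of two such matchings contains a $w_1$--$w_2$ path $R$ in $H[Y]$ of even length whose complement in $H[Y]$ is matchable; re-routing $D_1$ through $R$ in place of $y$ keeps it odd and produces an odd conformal bicycle of $H$. I expect this lemma to go through without difficulty.

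For the implication ``$b(G)\ge 2\Rightarrow G$ has an odd conformal bicycle'': if $b(G_i)\ge 2$ for some $i$, apply the induction hypothesis to $G_i$ and then the lifting lemma. Otherwise $b(G_1)=b(G_2)=1$, so $G_1$ and $G_2$ are both nonbipartite; using the auxiliary fact that a \mcg\ with a bipartite vertex-deleted subgraph is itself bipartite (a short argument comparing, for a perfect matching through each kind of edge at the deleted vertex, the sizes of the two parts), both $G[X]=G_2-\overline x$ and $G[\overline X]=G_1-x$ are nonbipartite. I would then use the lemma that a nonbipartite vertex-deleted subgraph of a near-brick contains an odd cycle with matchable complement --- its base case (a brick is bicritical, so each of its vertex-deleted subgraphs is factor-critical, and a factor-critical graph has such an odd cycle, as one reads off an odd ear decomposition) being clean, with a routine induction over nontrivial tight cuts. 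Applying this lemma inside $X$ and inside $\overline X$ yields vertex-disjoint odd cycles $C_1\subseteq G[X]$ and $C_2\subseteq G[\overline X]$ with matchable complements within their respective halves, and their union is an odd conformal bicycle of $G$.

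Finally let $G$ be a near-brick, so $b(G)=1$. Then one of $G_1,G_2$, say $G_1$, is bipartite and the other, $G_2$, is a near-brick on fewer vertices whose unique brick is the unique brick of $G$. If the brick of $G$ has an odd conformal bicycle then, by induction, so does $G_2$, and by the lifting lemma so does $G$. For the converse, suppose $G$ has an odd conformal bicycle $(C_1,C_2)$; I want to descend it to $G_2$ and then apply induction. Since $G[\overline X]$ is a subgraph of the bipartite graph $G_1$ it has no odd cycle, so each $C_i$ meets $X$, and moreover all cut edges meet $\overline X$ in a single part of the bipartition of $G[\overline X]$, whence every maximal subpath of $C_i$ inside $\overline X$ has even length. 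The remaining task is to re-route each $C_i$ through $G[X]$ so that it crosses $\partial(X)$ exactly twice; once this is arranged, contracting $\overline X$ carries $(C_1,C_2)$ to an odd conformal bicycle of $G_2$. This re-routing --- exchanging several excursions of a conformal cycle into $\overline X$ for one detour through $X$ while keeping the cycle odd and the ambient matching conformal --- is the step I expect to be the main obstacle. Granting it, together with the lifting lemma and the factor-critical ingredient above, the three implications combine via a short case analysis on $(b(G_1),b(G_2))$, completing the induction and hence the theorem.
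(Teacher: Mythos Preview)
The paper does not prove Theorem~\ref{thm:BvN-mcg-characterization}; it is quoted from \cite{clm04} and used as a black box, so there is no in-paper proof to compare against. Your overall plan --- reformulate via Theorem~\ref{thm:BvN-iff-no-odd-conformal-bicycle} and induct along a nontrivial tight cut --- is natural, and your lifting lemma is correct; it is precisely the contrapositive of Proposition~\ref{prop:BvN-inheritance-tight-cut-contractions}, and your symmetric-difference sketch goes through.

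There are, however, two gaps. The one you flag --- descending an odd conformal bicycle from $G$ to the nonbipartite contraction $G_2$ --- is real, and a purely combinatorial re-routing is awkward. The clean fix is to argue with the polytope for this direction: because $G_1=G/X$ is a bipartite \mcg, summing $x(\partial(v))$ over the colour class of $G_1$ not containing the contraction vertex forces $x(\partial(X))=1$ for \emph{every} non-negative $1$-regular $x$ on $G$; hence $x$ restricts to non-negative $1$-regular vectors on both $G_1$ and $G_2$, and if both contractions are \BvN\ a standard transportation argument splices the two convex combinations of perfect matchings across the tight cut to exhibit $x\in{\mathcal Poly}(G)$. This yields ``$G_1$ bipartite and $G_2$ \BvN\ $\Rightarrow$ $G$ \BvN'' directly, which is exactly what the induction needs and is presumably close to the approach in \cite{clm04}. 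The second gap is your lemma that a vertex-deleted near-brick, when nonbipartite, contains an odd cycle with matchable complement: the brick base case via factor-criticality and odd ear decompositions is fine, but the inductive step is not routine. When the deleted vertex lies on the shore whose contraction is bipartite and shares a colour class with the contraction vertex, that shore cannot be matched internally, and threading the matching through cut edges interacts with the choice of odd cycle on the near-brick side. The case analysis can be completed, but it takes genuine work rather than a one-line appeal.
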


We briefly discuss the class of ``solid'' {\mcg}s that plays a crucial role in several works of Carvalho, Lucchesi and Murty;
see \cite{clm02,clm04}. As per the terminology we have already defined, a matching covered graph $G$ is {\it solid}
if and only if each brick of $G$ (if any) is \BvN. Thus every \BvN\ \mcg\ is also solid. However, the converse is not true.
In the case of bricks (as well as near-bricks), the two notions coincide exactly.
Thus the problem of characterizing \BvN\ bricks is in fact the same as that of characterizing solid bricks.
The following result of Carvalho, Lucchesi and Murty \cite{clm06} implies that the problems of deciding whether a planar graph $G$
is \BvN, and of deciding whether $G$ is solid, are both in $\mathcal{P}$.

\begin{thm}
\label{thm:BvN-planar-bricks}
The odd wheels, up to multiple edges, are the only \BvN\ planar bricks.
\end{thm}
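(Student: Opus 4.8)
The plan is to translate the statement into the language of conformal bicycles and then to exploit planarity through the planar dual. By Theorem~\ref{thm:BvN-iff-no-odd-conformal-bicycle}, a \mcg\ is \BvN\ if and only if it contains no odd conformal bicycle; and since an odd wheel contains no two vertex-disjoint cycles at all (as noted in the excerpt), every odd wheel is a planar brick that is \BvN. Thus one containment is immediate, and the substance of the theorem is the converse: if $G$ is a planar brick with no odd conformal bicycle, then $G$ is an odd wheel. Since \BvN-ness depends only on the underlying simple graph, I may treat multiplicities as immaterial and argue with the simple graph. Throughout I would use that a brick is $3$-connected with minimum degree three, so that by Whitney's theorem its planar embedding is unique and every facial boundary is an induced, non-separating cycle.

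The engine of the argument is the following bridging lemma: \emph{a planar brick that contains two vertex-disjoint cycles contains an odd conformal bicycle}. To prove it I would fix the embedding and call a face \emph{odd} if its boundary has odd length; since summing boundary lengths counts each edge twice, the number of odd faces is even, and since a brick is non-bipartite there are at least two of them. The key planar observation is that a cycle $C$ is odd if and only if the region it bounds encloses an odd number of odd faces. Consequently the boundary of any single odd face is itself an odd cycle, and if two odd faces are vertex-disjoint then their boundaries furnish two vertex-disjoint odd cycles. The remaining case is that every pair of odd faces meets; here a Helly-type argument in the plane, together with $3$-connectivity, should force all odd faces through a common vertex $h$, and I would show that this pins the structure down to a wheel centred at $h$. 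When instead two vertex-disjoint cycles are present but are not yet odd, I would reroute them across odd faces, using the parity criterion above, to replace each by a vertex-disjoint odd cycle.

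The main obstacle is \emph{conformality}: having produced two vertex-disjoint odd cycles $C_1,C_2$, one must guarantee that $G-V(C_1)-V(C_2)$ has a perfect matching. Disjointness and oddness alone do not give this, so I would select the cycles as economically as possible --- ideally as facial boundaries, whose complements are controlled because facial cycles of a brick are non-separating --- and then invoke the matching structure of $G$. Here I would lean on the fact that a brick is bicritical, combined with its ear structure, to match up the leftover vertices; in the extremal case where $C_1$ and $C_2$ already cover $V(G)$ the complement is empty and the condition is vacuous. Making this matchability argument uniform across all configurations of the two cycles is, I expect, where the real work lies.

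Granting the lemma, a \BvN\ planar brick contains no two vertex-disjoint cycles. I would finish by appealing to the classical description of graphs without two vertex-disjoint cycles (Lov\'asz): intersecting that list with the hypotheses that $G$ is planar, $3$-connected, non-bipartite, bicritical, and of minimum degree three eliminates every non-wheel family, since each is either non-planar (containing $K_5$ or $K_{3,3}$), or bipartite (hence not a brick), or has a vertex meeting all cycles (hence a vertex of degree below three), each of which is excluded by our hypotheses. What survives is exactly the wheels, and non-bipartiteness then selects the odd-length rim, so $G$ is an odd wheel. This yields Theorem~\ref{thm:BvN-planar-bricks}.
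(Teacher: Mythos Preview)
The paper does not prove this theorem; it quotes it from Carvalho, Lucchesi and Murty \cite{clm06} and, in Section~\ref{sec:bricks}, remarks only that their proof proceeds by the thin-edge generation procedure for bricks (Theorems~\ref{thm:thin-edge} and~\ref{thm:brick-expansion}): one shows inductively that any brick built from $K_4$, $\overline{C_6}$ or the Petersen graph by expansion operations, and which stays planar and \BvN\ throughout, must be an odd wheel. Your route via planar duality, odd faces, and Lov\'asz's classification of graphs without two vertex-disjoint cycles is therefore entirely different in spirit.

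That said, your proposal has a genuine gap, and you name it yourself. Producing two vertex-disjoint odd cycles is not enough; Theorem~\ref{thm:BvN-iff-no-odd-conformal-bicycle} requires that $G-V(C_1)-V(C_2)$ be matchable. Your suggestion to ``lean on the fact that a brick is bicritical, combined with its ear structure'' does not supply this: bicriticality controls the deletion of \emph{two} vertices, not of an arbitrary even set, and there is no general principle guaranteeing that the complement of two facial cycles in a brick has a perfect matching. This conformality step is exactly the hard content of the theorem, and the thin-edge approach of \cite{clm06} sidesteps it entirely by never having to exhibit a bicycle directly. Your sketch also leaves the ``rerouting across odd faces'' and the ``Helly-type argument'' unjustified; pairwise-intersecting face boundaries in a plane graph need not share a common vertex, so that step would require real work.

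There is a further slip in your closing paragraph: you write that a non-wheel case in Lov\'asz's list is excluded because it ``has a vertex meeting all cycles (hence a vertex of degree below three)''. That implication is false --- the hub of a wheel lies on many cycles and has large degree --- and in any case deleting the hub of an odd wheel leaves the rim, which is a cycle, so odd wheels do not even fall under ``$G-v$ is a forest''. The correct statement you want is the $3$-connected corollary of Lov\'asz's theorem (the only simple $3$-connected graphs without two vertex-disjoint cycles are $K_5$ and the wheels), after which planarity rules out $K_5$ and non-bipartiteness selects the odd rim; but you should cite that version rather than the argument you gave.
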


A graph $G$ is {\it odd-intercyclic} if it does not contain two vertex-disjoint odd cycles.
The odd wheels have this property.
Clearly, every odd-intercyclic brick is \BvN. The Murty graph, shown in Figure~\ref{fig:K4_splice_K33_and_Murty_graph}(b),
is the smallest brick that is \BvN, but is not odd-intercyclic.
It is in this context that U. S. R. Murty first stumbled upon this graph (private communication),
and it also appears in the work of Carvalho, Lucchesi and Murty \cite[Figure 14]{clm06}.

\subsection{Proof of Theorem~\ref{thm:main}}

As is often the case in matching theory, it turns out that the most difficult part of proving the Main Theorem (\ref{thm:main})
is when the graph under consideration is a brick. In this section, we will assume the following characterization of bricks
that are \BvN\ as well as \PMc, and present a proof of Theorem~\ref{thm:main} that relies on this assumption.

\begin{thm}
\label{thm:BvN-PMc-bricks}
A brick $G$ is \BvN\ as well as \PMc\ if and only if $G$ is one of the following: (a) $K_4$ (up to multiple edges such that
it does not have two vertex-disjoint cycles), (b) an odd wheel of order six or more (up to multiple spokes),
(c) the Murty graph (up to multiple edges joining the two noncubic vertices).
\end{thm}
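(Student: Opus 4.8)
My plan is to rephrase the statement in terms of conformal bicycles and then split into two cases. First I would combine Theorems~\ref{thm:BvN-iff-no-odd-conformal-bicycle} and~\ref{thm:PMc-iff-no-even-conformal-bicycle}: a brick is \BvN\ as well as \PMc\ if and only if it contains neither an odd nor an even conformal bicycle, that is, if and only if it contains no conformal bicycle whatsoever. So the theorem becomes the assertion that a brick contains no conformal bicycle if and only if it is $K_4$, an odd wheel of order six or more, or the Murty graph, in each case up to the stated edge multiplicities. The ``if'' direction is immediate: $K_4$ and the odd wheels are bricks in which every cycle passes through the hub, so they contain no two vertex-disjoint cycles at all, and hence no conformal bicycle; and the Murty graph is a brick which, even with multiple edges joining its two noncubic vertices, contains no conformal bicycle, as noted just before the statement. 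All the work lies in the converse.

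So suppose $G$ is a brick containing no conformal bicycle. I would use throughout that $G$ is $3$-connected and bicritical, and that every vertex of $G$ has at least three distinct neighbours. \emph{Case 1: $G$ has no two vertex-disjoint cycles.} Here I would appeal to the classical theorem of Lov{\'a}sz describing the ($3$-connected) graphs without two vertex-disjoint cycles, and intersect it with the brick hypothesis. The possibility $K_5$ is excluded since its order is odd; the graphs consisting of $K_{3,p}$ together with some edges inside the three-vertex side are excluded because deleting two of those three vertices leaves an independent set together with one further vertex, hence no perfect matching, contradicting bicriticality; and ``$G$ minus some vertex is a forest'' is excluded because a leaf of such a forest would have fewer than three distinct neighbours in $G$. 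What survives is that $G$ is a wheel; matchability forces its rim to be an odd cycle, and the absence of two vertex-disjoint cycles forbids doubling any rim edge once the rim has length at least five. Thus $G$ is $K_4$, or an odd wheel of order six or more, up to the permitted multiplicities --- alternatives (a) and (b).

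The remaining case is the crux. \emph{Case 2: $G$ contains two vertex-disjoint cycles, but no conformal bicycle.} I want to conclude that $G$ is the Murty graph. I would choose vertex-disjoint cycles $C_1,C_2$ with $|V(C_1)\cup V(C_2)|$ maximum, and set $D := G-V(C_1)-V(C_2)$; since $(C_1,C_2)$ is not a conformal bicycle, $D$ has no perfect matching, so $D$ is nonempty. Maximality immediately gives that no vertex of $D$ has two neighbours that are consecutive on a common $C_i$, since rerouting $C_i$ through such a vertex would strictly enlarge the covered set. Feeding this, together with bicriticality of $G$, the non-matchability of $D$, and the fact that each of the at least three neighbours of a vertex of $D$ lies on $C_1\cup C_2$, I would argue that $D$ is a single vertex $v$, that one of $C_1,C_2$ is a triangle and the other a $4$-cycle, and that $v$ is joined to one vertex of the triangle and to the two antipodal vertices of the $4$-cycle. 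At that point $G$ lives on eight vertices with only boundedly many still-undetermined edges; running through the cases and discarding those that fail $3$-connectivity, bicriticality, or conformal-bicycle-freeness should leave exactly the Murty graph, up to multiple edges joining its two noncubic vertices --- alternative (c).

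The hard part is this last reduction: extracting from ``contains no conformal bicycle'' enough rigidity to pin $G$ down to a single eight-vertex graph. This requires a careful analysis of the ways $D$ can attach to $C_1\cup C_2$ and of the interplay between bicriticality of $G$ and the Gallai--Edmonds structure of the non-matchable graph $D$; by comparison, the reduction to ``no conformal bicycle'' and the whole of Case~1 are routine.
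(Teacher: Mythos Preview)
Your two cases have very different statuses.

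Case~1 is correct and takes a genuinely different route from the paper. The paper never invokes Lov\'asz's classification of graphs without two vertex-disjoint cycles; it instead runs an induction via the Norine--Thomas theory of strictly thin edges (Theorem~\ref{thm:strictly-thin-edge}), showing first that the only Norine--Thomas bricks with no conformal bicycle are the odd wheels (Proposition~\ref{prop:NT-brick-BvN-PMc}), and then tracking what happens under each expansion operation through a long sequence of case-analysis propositions (\ref{prop:wheel_plus_edge}--\ref{prop:W5_W7_index_3}) and two reduction lemmas (\ref{lem:wheels_index_1},~\ref{lem:wheels_index_3}). Your shortcut via Lov\'asz's theorem intersected with bicriticality is cleaner for this half, and your eliminations (odd order for $K_5$; failure of $G-u-v$ to be matchable for the $K_{3,p}$-with-extra-edges family; the degree bound for the forest-plus-a-vertex case) are sound.

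Case~2, however, is where the proof actually lives, and here you have a genuine gap, not merely an acknowledged hard step. Two concrete problems. First, the ``fact'' you feed into the argument---that every neighbour of a vertex of $D$ lies on $C_1\cup C_2$---is not justified by anything you wrote: maximality of $|V(C_1)\cup V(C_2)|$ only forbids a vertex of $D$ from having two \emph{consecutive} neighbours on a $C_i$; it does not rule out edges inside $D$. Second, and more seriously, you give no mechanism that bounds $|V(G)|$. The leap from ``$D$ is nonempty and non-matchable'' to ``$|D|=1$, $|C_1|=3$, $|C_2|=4$'' is precisely the content of the theorem in this case; Gallai--Edmonds hands you a barrier in $D$, but nothing you have said prevents $C_1$, $C_2$ and $D$ from all being large. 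The paper's thin-edge induction supplies exactly this missing handle: it peels $G$ back to an odd wheel one edge at a time, and the Murty graph is recognised as the unique \BvN--\PMc\ brick arising from $W_5$ via an index-one expansion (Proposition~\ref{prop:W5_index_1}), after which every further expansion is shown to create a conformal bicycle. Without an analogous inductive or size-reducing device, your extremal choice of $(C_1,C_2)$ does not by itself force an eight-vertex endgame.
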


A proof of the above result appears in Section~\ref{sec:bricks}. In order to prove Theorem~\ref{thm:main},
we will need a few more results.
The following is easy to prove using the characterization of \PMc\ graphs provided by
Theorem~\ref{thm:PMc-iff-no-even-conformal-bicycle}.
A proof appears in \cite{wlclsl13}.

\begin{prop}
\label{prop:PMc-inheritance-tight-cut-contractions}
Let $G$ be a \PMc\ \mcg, and let $C$ be a nontrivial tight cut of $G$. Then each $C$-contraction of $G$ is \PMc.
\end{prop}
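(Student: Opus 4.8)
The plan is to prove the contrapositive and invoke Theorem~\ref{thm:PMc-iff-no-even-conformal-bicycle} on both $G$ and its contractions. Fix a shore $X$ of the nontrivial tight cut $C=\partial(X)$ and write $G/X=G/(X\rightarrow x)$; by symmetry it suffices to prove that $G/X$ is \PMc. Suppose it is not. Since $C$ is a nontrivial tight cut, $G/X$ is a \mcg, hence matchable, so by Theorem~\ref{thm:PMc-iff-no-even-conformal-bicycle} it contains an even conformal bicycle $(C_1,C_2)$; fix a perfect matching $N$ of $(G/X)-V(C_1)-V(C_2)$. The goal is to convert $(C_1,C_2,N)$ into an even conformal bicycle of $G$ itself, which would contradict the hypothesis that $G$ is \PMc. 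Two ingredients will be used repeatedly: first, the other contraction $G/\overline{X}$ is also a \mcg, so every edge of $C$ (viewed as an edge of $G/\overline{X}$ at the contraction vertex $\bar x$) lies in some perfect matching of $G/\overline{X}$; second, $|X|$ is odd, since in any perfect matching of $G$ the number of edges meeting $X$ is congruent to $|X|$ modulo $2$ and equals $1$ by tightness.

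First I would dispose of the easy case, where the contraction vertex $x$ lies on neither $C_1$ nor $C_2$. Then $C_1$ and $C_2$ use only edges of $G$ internal to $\overline{X}$, so they already form a pair of vertex-disjoint even cycles of $G$, and it remains only to perfectly match $G-V(C_1)-V(C_2)$. In this situation $x$ is a vertex of $(G/X)-V(C_1)-V(C_2)$, so $N$ matches it along the image of some cut edge $e=x'y$ with $x'\in X$; then $(N\setminus\{xy\})\cup\{e\}$, together with the edges of $G[X]$ supplied by a perfect matching of $G/\overline{X}$ through the image of $e$, is the required perfect matching.

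The main obstacle is the case $x\in V(C_i)$, say $i=1$, where the cycle $C_1$ must be \emph{rerouted} through the contracted shore $X$ in a way that keeps it even \emph{and} keeps its complement matchable. Deleting $x$ from $C_1$ leaves a path $P$ of $G[\overline{X}]$ with an even number of edges, joining two vertices $y_1,y_2\in\overline{X}$ attached to $X$ by cut edges $f_1=x_1'y_1$ and $f_2=x_2'y_2$. To build the connecting path I would pick perfect matchings $M_1,M_2$ of $G/\overline{X}$ containing the images of $f_1,f_2$ respectively, let $D$ be the cycle of the symmetric difference $M_1\bigtriangleup M_2$ through $\bar x$, and set $Q:=D-\bar x$, which is a path from $x_1'$ to $x_2'$ inside $G[X]$; a short alternating-path argument then shows that the restriction of $M_1$ to $X\setminus V(Q)$ is a perfect matching of $G[X]-V(Q)$, and $|V(Q)|$ is odd because $D$ is an even cycle. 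Now $\widetilde{C_1}:=P\cup\{f_1,f_2\}\cup Q$ is a cycle vertex-disjoint from $C_2$; counting edges (an even path $P$, an even path $Q$, and the two cut edges) shows $\widetilde{C_1}$ is even; and combining $N$ on the $\overline{X}$-side with the restriction of $M_1$ on the $X$-side perfectly matches $G-V(\widetilde{C_1})-V(C_2)$. Hence $(\widetilde{C_1},C_2)$ is an even conformal bicycle of $G$, the desired contradiction. The one point that needs care is that these parity conditions interlock correctly — that $|X|$ odd forces $|V(Q)|$ odd, compatibly with $G[X]-V(Q)$ having even order, and that an even path plus an even path plus two cut edges yields an even cycle — but given the ingredients recorded in the first paragraph this is routine; degenerate sub-configurations, such as $C_1$ being a digon or $x_1'=x_2'$, are handled by the same argument with $P$ or $Q$ reduced to a single vertex. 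Finally, the statement for $G/\overline{X}$ follows by interchanging the roles of $X$ and $\overline{X}$.
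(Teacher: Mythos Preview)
Your argument is correct: lifting an even conformal bicycle from $G/X$ back to $G$ via the characterization of Theorem~\ref{thm:PMc-iff-no-even-conformal-bicycle} is exactly the approach the paper indicates, and your rerouting of $C_1$ through $X$ using the symmetric difference of two perfect matchings of $G/\overline{X}$ is the standard way to carry this out. The paper itself does not spell out the details, merely noting that the proposition follows easily from Theorem~\ref{thm:PMc-iff-no-even-conformal-bicycle} and citing \cite{wlclsl13} for a proof; your write-up fills in precisely those details.
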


Likewise, the following is easy to prove using the characterization of \BvN\ graphs provided by
Theorem~\ref{thm:BvN-iff-no-odd-conformal-bicycle}.

\begin{prop}
\label{prop:BvN-inheritance-tight-cut-contractions}
Let $G$ be a \BvN\ \mcg, and let $C$ be a nontrivial tight cut of $G$. Then each $C$-contraction of $G$ is \BvN.
\end{prop}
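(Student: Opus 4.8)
The plan is to prove the contrapositive via the forbidden‑configuration characterization of Theorem~\ref{thm:BvN-iff-no-odd-conformal-bicycle}: writing $C=\partial(X)$ and letting $H:=G/(X\rightarrow x)$ be one of the two $C$-contractions, I would assume $H$ contains an odd conformal bicycle $(C_1,C_2)$ and lift it to an odd conformal bicycle of $G$. Two preliminary remarks do all the connecting work. First, since $G$ is matching covered and $C$ is tight, for every edge $f=uw\in C$ with $u\in X$ there is a perfect matching $M$ of $G$ with $M\cap C=\{f\}$, so $M\cap E(G[X])$ is a perfect matching of $G[X]-u$; in particular $G[X]-u$ is matchable. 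Second, $H-x=G[\overline{X}]$, so every cycle or path of $H$ that avoids $x$ is a cycle or path of $G[\overline{X}]\subseteq G$, and each edge of $H$ incident with $x$ ``is'' a cut edge of $G$.

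Since $C_1$ and $C_2$ are vertex‑disjoint, at most one of them passes through $x$. If neither does, then $C_1,C_2$ are vertex‑disjoint odd cycles of $G$; taking a perfect matching $N$ of $H-V(C_1)-V(C_2)$, the edge of $N$ at $x$ corresponds to a cut edge $f=uw$ with $w\notin V(C_1)\cup V(C_2)$, and replacing that edge by $f$ and adjoining a perfect matching of $G[X]-u$ produces a perfect matching of $G-V(C_1)-V(C_2)$. Hence $(C_1,C_2)$ is an odd conformal bicycle of $G$, contradicting Theorem~\ref{thm:BvN-iff-no-odd-conformal-bicycle}.

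The substantive case is when, say, $x\in V(C_1)$. Then $C_2$ is already an odd cycle of $G[\overline{X}]$, and deleting $x$ from $C_1$ leaves a path $Q$ in $G[\overline{X}]$ joining the two neighbours $w_1,w_2$ of $x$ on $C_1$; let $f_i=u_iw_i\in C$ with $u_i\in X$. I would turn $C_1$ into an odd cycle of $G$ by reconnecting $u_1$ to $u_2$ through $X$: choose perfect matchings $M_i$ of $G[X]-u_i$ as above and let $P$ be the unique $u_1$--$u_2$ path in the symmetric difference $M_1\,\triangle\,M_2$. Because $M_1$ misses only $u_1$ and $M_2$ misses only $u_2$, the path $P$ begins with an $M_2$-edge and ends with an $M_1$-edge, hence has an even number of edges; consequently $M_1\setminus E(P)$ is a perfect matching of $G[X]-V(P)$, and the cycle $C_1'$ of $G$ built from $f_1$, the path $Q$, $f_2$, and the path $P$ satisfies $|E(C_1')|=|E(Q)|+|E(P)|+2$, which is odd since $|E(C_1)|=|E(Q)|+2$ is odd and $|E(P)|$ is even. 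Finally $C_1'$ and $C_2$ are vertex‑disjoint (their parts inside $\overline{X}$ lie in $V(C_1)\setminus\{x\}$ and in $V(C_2)$, respectively), and $(M_1\setminus E(P))\cup N$, where $N$ is a perfect matching of $H-V(C_1)-V(C_2)=G[\overline{X}]-V(Q)-V(C_2)$, is a perfect matching of $G-V(C_1')-V(C_2)$; so $(C_1',C_2)$ is an odd conformal bicycle of $G$, again a contradiction. The degenerate subcase $u_1=u_2$ is handled by taking $P$ to be the trivial path $u_1$, and the case where $C_1$ is a triangle is subsumed by the same computation.

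I expect the one delicate point to be the bookkeeping in this last case — pinning down the parity of $P$ and checking that $M_1\setminus E(P)$ is genuinely a perfect matching of $G[X]-V(P)$ (in particular when $G$ has multiple edges) — while everything else is routine. As an alternative route, one could avoid conformal bicycles altogether and invoke Theorem~\ref{thm:BvN-mcg-characterization} together with additivity of the brick count under tight‑cut contraction: a \BvN\ \mcg\ is either bipartite (so $b=0$) or a near‑brick whose unique brick is \BvN\ (so $b=1$), and since $b(G/X)+b(G/\overline{X})=b(G)$, each $C$-contraction is either bipartite or a near‑brick whose unique brick is the (\BvN) brick of $G$, hence \BvN\ in both cases by Theorem~\ref{thm:BvN-mcg-characterization}.
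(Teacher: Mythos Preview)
Your main argument is correct and is precisely the route the paper indicates: the paper does not spell out a proof but simply says the proposition ``is easy to prove using the characterization of \BvN\ graphs provided by Theorem~\ref{thm:BvN-iff-no-odd-conformal-bicycle}'', and your lifting of an odd conformal bicycle from the $C$-contraction to $G$ is exactly that. The bookkeeping you flag as delicate (even length of $P$, and $M_1\setminus E(P)$ being a perfect matching of $G[X]-V(P)$) checks out, including the degenerate case $u_1=u_2$.

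Your alternative via Theorem~\ref{thm:BvN-mcg-characterization} and additivity of the brick count is a genuinely different (and slicker) route that the paper does not mention; it trades the explicit cycle-lifting for reliance on a deeper structural theorem, while the conformal-bicycle argument stays self-contained at the level of Theorems~\ref{thm:PMc-iff-no-even-conformal-bicycle}--\ref{thm:BvN-iff-no-odd-conformal-bicycle}.
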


Finally, we will need the following lemma in order to prove Theorem~\ref{thm:main} using induction.

\begin{lem}
\label{lem:mcg-min-degree-three-or-more-tight-cut}
Let $G$ be a \mcg\ that has minimum degree three or more.
If $G$ has a nontrivial tight cut, then $G$ has a nontrivial tight cut that has at least three edges.
\end{lem}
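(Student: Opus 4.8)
I would argue by contradiction. Suppose $G$ is a matching covered graph of minimum degree at least three that has a nontrivial tight cut but no nontrivial tight cut with three or more edges; then every nontrivial tight cut of $G$ has exactly two edges. (It has at least two: a single-edge tight cut forces that edge into every perfect matching, and then, taking an end $v$ of it, each of the $\ge 2$ remaining edges at $v$ would lie in no perfect matching, contradicting matching coveredness.) I would also invoke the parity identity $|M\cap\partial(S)|\equiv|S|\pmod 2$, valid for every perfect matching $M$ and every vertex set $S$; hence every shore of a tight cut has odd order, so both shores of a nontrivial tight cut have order at least three. Among all nontrivial tight cuts, fix $C=\partial(X)$ with $|X|\le|\overline X|$ and $|X|$ as small as possible, and write $C=\{e_1,e_2\}$ with $e_i=x_iy_i$, $x_i\in X$ and $y_i\in\overline X$.

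First I would observe that $x_1\neq x_2$ (and, symmetrically, $y_1\neq y_2$): if $x_1=x_2=:x$, then every perfect matching uses exactly one of $e_1,e_2$ and so matches $x$ into $\overline X$, whence the edges from $x$ to $X\setminus\{x\}$ --- which exist since $\deg(x)\ge 3>|C|$ --- lie in no perfect matching, a contradiction. Now form the $C$-contraction $G_1:=G/(\overline X\to\overline x)$, a matching covered graph on $|X|+1\ge 4$ vertices in which $\overline x$ has degree two with the \emph{two distinct} neighbours $x_1,x_2$. The key step is to show that $G_1$ has no nontrivial tight cut. Suppose $\partial(Y)$ were one; replacing $Y$ by its complement if necessary, assume $\overline x\notin Y$, so that $Y$ is a nonempty proper subset of $X$ with $|Y|\ge 2$. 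Under the natural bijection between the edges of $\partial_{G_1}(Y)$ and those of $\partial_G(Y)$ --- an edge inside $X$ corresponds to itself, and an edge of $G_1$ from $Y$ to $\overline x$ corresponds to the edge $e_i$ with $x_i\in Y$ --- and using that each perfect matching of $G$ restricts to a perfect matching of $G_1$ across the tight cut $C$, one checks that $\partial_G(Y)$ is a tight cut of $G$ with $|\partial_G(Y)|=|\partial_{G_1}(Y)|$, and it is nontrivial since $|Y|\ge 2$ and $|V(G)\setminus Y|\ge|\overline X|\ge 3$. By hypothesis $|\partial_G(Y)|=2$; but then $Y$ is the smaller shore (because $|Y|<|X|\le|\overline X|<|V(G)\setminus Y|$) of a nontrivial tight cut of $G$ with $|Y|<|X|$, contradicting the minimality of $|X|$.

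Consequently $G_1$ is a brick or a brace. Since $\overline x$ has degree two, and since every brick, as well as every brace of order at least six, has minimum degree at least three, $G_1$ must be a brace of order less than six, hence of order exactly four; thus $|X|=3$, say $X=\{x_1,x_2,w\}$. As $G_1$ is bipartite with $\overline x$ adjacent to $x_1$ and $x_2$, the vertex $w$ lies in the colour class of $\overline x$, so in $G$ the vertex $w$ is adjacent only to $x_1$ and $x_2$ (and to both, as $x_1$ and $x_2$ keep their degrees $\ge 3$ in $G_1$), while $x_1$ is non-adjacent to $x_2$ and so in $G$ has precisely $w$ and $y_1$ as its neighbours. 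I would then exhibit the forbidden cut directly: $\partial_G(\{x_1,w,y_1\})$ is a nontrivial tight cut of $G$ with at least three edges, which is the final contradiction. It is nontrivial since this shore has three vertices and its complement has $|\overline X|\ge 3$ vertices; it has at least three edges, since $x_1$ contributes none (both its neighbours lie in the set), $w$ contributes the edges $wx_2$ (at least one), and $y_1$ contributes its $\deg_G(y_1)-1\ge 2$ edges other than $e_1$; and it is tight, because for any perfect matching $M$ the vertex $w$ is matched either to $x_1$ --- forcing $y_1$ to be matched outside the set, the unique cut edge of $M$ --- or to $x_2$, forcing $x_1$ to be matched to $y_1$, so that $wx_2$ is the unique cut edge of $M$.

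The part I expect to be the main obstacle is the transition through $G_1$: using the minimality of $|X|$ to squeeze $G_1$ down to a four-vertex brace, and then noticing that in this single degenerate configuration the large tight cut we want is \emph{not} inherited from either $C$-contraction but has to be produced by hand as the cut $\partial(\{x_1,w,y_1\})$ that crosses $C$.
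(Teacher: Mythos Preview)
Your argument is correct, but it is considerably more elaborate than necessary. The paper's proof is a two-line direct construction: given any nontrivial tight cut $C=\partial(X)$ with $|C|=2$, say $C=\{u\overline{u},v\overline{v}\}$ with $u,v\in X$ and $\overline{u},\overline{v}\in\overline{X}$, set $Y:=X-u+\overline{v}$ and $D:=\partial(Y)$. Every edge of $D$ is incident with $u$ or with $\overline{v}$; a one-line parity check then shows $|M\cap D|=1$ for every perfect matching $M$, and $|D|\ge(\deg(u)-1)+(\deg(\overline{v})-1)\ge 4$. No contradiction, no minimality, no passage through bricks and braces.

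It is worth noticing that your final cut $\partial(\{x_1,w,y_1\})$ is precisely this construction: with $X=\{x_1,x_2,w\}$ you have $\{x_1,w,y_1\}=X-x_2+y_1$. So after all the work of choosing a minimal shore, contracting, invoking the degree bound for bricks and large braces, and analysing the $4$-vertex brace, you land on exactly the swap $X\mapsto X-u+\overline{v}$ that the paper applies immediately to an arbitrary $X$. The lesson is that the swap works in general, not just in the degenerate $|X|=3$ case, and recognising this removes the entire superstructure (minimality, the transfer of tight cuts through the $C$-contraction, and the brick/brace dichotomy). Your route does buy one thing: it makes explicit why a $2$-edge tight cut forces a very constrained local picture on the small side. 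But for the lemma at hand that information is not needed.

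A minor expository point: your parenthetical ``as $x_1$ and $x_2$ keep their degrees $\ge 3$ in $G_1$'' only yields $w$ adjacent to both once you have also used that $x_1$ has a \emph{single} edge to $\overline{x}$ in $G_1$ (since $|C|=2$ and $x_1\ne x_2$) together with the bipartition to rule out $x_1x_2$; you state the latter only afterwards. The logic is fine, but the order of presentation could be cleaner.
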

\begin{proof}
Let $C:=\partial(X)$ be a nontrivial tight cut. If $|C| \geq 3$ then we are done.
Now suppose that $|C|=2$,
and let $C:=\{u\overline{u}, v\overline{v}\}$ such that $u, v \in X$ and $\overline{u},\overline{v} \in \overline{X}$.
We let $Y:=X - u + \overline{v}$ and $D:=\partial(Y)$.
Clearly, $|Y|=|X|$; consequently, $D$ is a nontrivial cut
and both of its shores are of odd cardinality.
Observe that each edge of $D$ is incident either with~$u$ or with~$\overline{v}$.
Thus each perfect matching meets $D$ in precisely one edge;
whence $D$ is a (nontrivial) tight cut.
Finally, we infer that $|D| \geq 4$ since $G$ has minimum degree three or more.
This completes the proof of Lemma~\ref{lem:mcg-min-degree-three-or-more-tight-cut}.
\end{proof}

We are now ready to prove the Main Theorem (\ref{thm:main}) --- under the assumption that Theorem~\ref{thm:BvN-PMc-bricks} holds.

\begin{proofOf}{Theorem~\ref{thm:main}}
The equivalence of the first two statements follows from
Theorems \ref{thm:PMc-iff-no-even-conformal-bicycle}~and~\ref{thm:BvN-iff-no-odd-conformal-bicycle}~.
As discussed earlier, the graphs mentioned in statement {\it (iii)} do not have a conformal bicycle,
whence {\it (iii)} implies {\it (ii)}. Our task is to prove that {\it (ii)} implies {\it (iii)}.

\smallskip
Suppose that $G$ is a matching covered graph that has minimum degree three or more and that does not have a conformal
bicycle. We will show that $G$ is one of the graphs mentioned in statement {\it (iii)}. We proceed by induction on the number
of edges.

\smallskip
If $G$ is bipartite, or if $G$ is a brick, then we are done by Theorem~\ref{thm:bip-PMc-graphs},
or by Theorem~\ref{thm:BvN-PMc-bricks}, respectively. Now suppose that $G$ is nonbipartite, and that it has a nontrivial tight cut.
Lemma~\ref{lem:mcg-min-degree-three-or-more-tight-cut} implies that $G$ has a nontrivial tight cut $C:=\partial(X)$
such that $|C| \geq 3$. We let $G_1:=G / \overline{X} \rightarrow \overline{x}$ and $G_2:=G / X \rightarrow x$
denote the two $C$-contractions of $G$. Each of them is a \mcg\ with fewer edges.
Since $|C| \geq 3$, each of $G_1$ and $G_2$ has minimum degree three or more.
Also, Propositions~\ref{prop:BvN-inheritance-tight-cut-contractions} and \ref{prop:PMc-inheritance-tight-cut-contractions} imply
that each of $G_1$ and $G_2$ is \BvN\ as well as \PMc.
Theorem~\ref{thm:BvN-mcg-characterization} implies that $G$ is a near-brick.
Consequently, exactly one of $G_1$ and $G_2$ is bipartite;
adjust notation so that $G_2$ is bipartite. It follows from the induction hypothesis that
$G_2$ is $K_{3,3}$. In particular, each vertex of $G_2$, including the contraction vertex~$x$, is cubic;
whence $|C|=3$ and the contraction vertex $\overline{x}$ of $G_1$ is also cubic.
We label the vertices of $G_2$ so that its color classes are $\{a_1,a_2,x\}$ and $\{b_1,b_2,b_3\}$.
By the induction hypothesis, we have four possibilities for the graph $G_1$, and we consider each of them
separately.

\medskip
\noindent
{\bf Case 1:} $G_1$ is $K_4$ (up to multiple edges).

\smallskip
\noindent
Note that the underlying simple graph of $G$ is $K_4 \odot K_{3,3}$. If $G$ is simple then there is nothing to prove.
Otherwise $G$ has a spanning subgraph $H$ that is isomorphic to the graph shown in Figure~\ref{fig:Cases-2.1-2.2} (left).
The reader may verify that $H$ has an even conformal bicycle, whence so does $G$, contrary to our assumption.

\medskip
\noindent
{\bf Case 2:} $G_1$ is an odd wheel of order six or more (up to multiple spokes).

\smallskip
\noindent
Since the contraction vertex $\overline{x}$ of $G_1$ is cubic, it lies on the rim of $G_1$.
We let $h$ denote the hub of $G_1$, and we label the remaining vertices of $G_1$ so that $G_1 - h$
is the cycle $(\overline{x}, w_1, w_2, \dots w_{2k},\overline{x})$.
Note that $k \geq 2$. Adjust notation so that the cut $C=\{b_1w_1,b_2h,b_3w_{2k}\}$.
Figure~\ref{fig:Cases-2.1-2.2} (right) shows the underlying simple graph when $k=2$.
Now let $C_1$ denote the even cycle $(w_1,w_2, \dots, w_{2k-1},h,w_1)$,
and let $C_2$ denote the $4$-cycle $(a_1,b_1,a_2,b_2,a_1)$. Observe that $(C_1,C_2)$
is an even conformal bicycle of $G$, contrary to our assumption.

\begin{figure}[!htb]
\centering
\begin{tikzpicture}[scale=1]
\draw (-1,-2) to [out=330,in=210] (1,-2);

\draw (0.75,1.5) -- (-1.5,0) -- (-0.75,1.5) -- (0,0) -- (0.75,1.5) -- (1.5,0) -- (-0.75,1.5);

\draw (0,0) -- (0,-1);
\draw (-1.5,0) -- (-1,-2);
\draw (1.5,0) -- (1,-2);

\draw (0,-1) -- (-1,-2) -- (1,-2) -- (0,-1);

\draw (0.75,1.5)node{};
\draw (-0.75,1.5)node{};

\draw (0,0)node{};
\draw (-1.5,0)node{};
\draw (1.5,0)node{};

\draw (0,-1)node{};
\draw (-1,-2)node{};
\draw (1,-2)node{};
\end{tikzpicture}
\hspace*{1in}
\begin{tikzpicture}[scale=1]
\draw (-1.5,-1.5) -- (-1,-3) -- (1,-3) -- (1.5,-1.5);

\draw (-1.5,-1.5) -- (0,-2);
\draw (-1,-3) -- (0,-2);
\draw (1,-3) -- (0,-2);
\draw (1.5,-1.5) -- (0,-2);

\draw (-1.5,0) -- (-1.5,-1.5);
\draw (0,0) -- (0,-2);
\draw (1.5,0) -- (1.5,-1.5);

\draw (0.75,1.5) -- (-1.5,0) -- (-0.75,1.5) -- (0,0) -- (0.75,1.5) -- (1.5,0) -- (-0.75,1.5);

\draw (0.75,1.5)node{}node[nodelabel,above]{$a_2$};
\draw (-0.75,1.5)node{}node[nodelabel,above]{$a_1$};

\draw (0,0)node{}node[nodelabel,left]{$b_2$};
\draw (-1.5,0)node{}node[nodelabel,left]{$b_1$};
\draw (1.5,0)node{}node[nodelabel,right]{$b_3$};

\draw (0,-2)node{}node[nodelabel,below]{$h$};
\draw (-1.5,-1.5)node{}node[nodelabel,left]{$w_1$};
\draw (-1,-3)node{}node[nodelabel,left]{$w_2$};
\draw (1,-3)node{}node[nodelabel,right]{$w_3$};
\draw (1.5,-1.5)node{}node[nodelabel,right]{$w_4$};

\end{tikzpicture}
\vspace*{-0.1in}
\caption{Illustrations for cases 1 and 2 in the proof of Theorem~\ref{thm:main}}
\label{fig:Cases-2.1-2.2}
\end{figure}
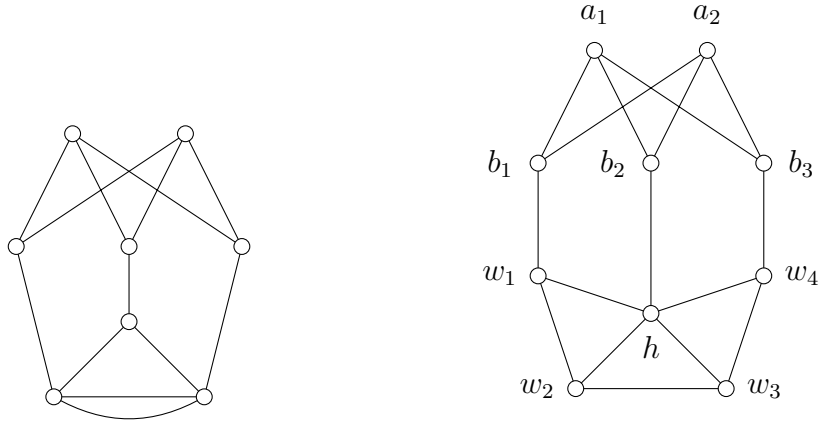

\medskip
\noindent
{\bf Case 3:} $G_1$ is $K_4 \odot K_{3,3}$.

\smallskip
\noindent
The graph $G$ is a splicing of two cubic graphs, namely $G_1 = K_4 \odot K_{3,3}$ and $G_2 = K_{3,3}$.
It follows from the automorphisms of $G_1$ that $G$ is isomorphic to one of three graphs shown in
Figure~\ref{fig:Cases-2.3-2.4}. Each of these three graphs has an even conformal bicycle that is shown using bold lines;
this contradicts our assumption.

\begin{figure}[!htb]
\centering
\begin{tikzpicture}[scale=1]
\draw[dashed] (-3,0.5) to [out=0,in=210] (0,1.5);
\draw (-3.3,0.5)node[nodelabel]{$C$};

\draw (-3,1.5) -- (-1.5,0);
\draw (-2,1.5) -- (0,0);
\draw (-1,1.5) -- (1.5,0);

\draw (-1.5,2.5) -- (-3,1.5) -- (-2.5,2.5) -- (-2,1.5) -- (-1.5,2.5) -- (-1,1.5) -- (-2.5,2.5);

\draw (0.75,1.5) -- (-1.5,0);
\draw (0.75,1.5) -- (0,0);
\draw (0.75,1.5) -- (1.5,0);

\draw (0,0) -- (0,-1);
\draw (-1.5,0) -- (-1,-2);
\draw (1.5,0) -- (1,-2);

\draw (0,-1) -- (-1,-2) -- (1,-2) -- (0,-1);

\draw[ultra thick] (0.75,1.5) -- (1.5,0) -- (1,-2) -- (0,-1) -- (-1,-2) -- (-1.5,0) -- (0.75,1.5);
\draw[ultra thick] (-1.5,2.5) -- (-3,1.5) -- (-2.5,2.5) -- (-1,1.5) -- (-1.5,2.5);

\draw (0.75,1.5)node{};

\draw (0,0)node{};
\draw (-1.5,0)node{};
\draw (1.5,0)node{};

\draw (0,-1)node{};
\draw (-1,-2)node{};
\draw (1,-2)node{};

\draw (-2,1.5)node{};
\draw (-3,1.5)node{};
\draw (-1,1.5)node{};

\draw (-1.5,2.5)node{};
\draw (-2.5,2.5)node{};

\draw (0,-3)node[nodelabel]{$(a)$};

\end{tikzpicture}
\hspace*{0.5in}
\begin{tikzpicture}[scale=1]
\draw[dashed] (0,0) circle (1.15);
\draw (0.9,0)node[nodelabel]{$C$};

\draw (0,-0.75) -- (0,-1.5);
\draw (-0.5,0.75) -- (-0.75,2.5);
\draw (0.5,0.75) -- (0.75,2.5);

\draw (-0.5,0) -- (-0.5,0.75);
\draw (-0.5,0) -- (0.5,0.75);
\draw (-0.5,0) -- (0,-0.75);
\draw (0.5,0) -- (-0.5,0.75);
\draw (0.5,0) -- (0.5,0.75);
\draw (0.5,0) -- (0,-0.75);

\draw (-0.75,2.5) -- (-2,0);
\draw (-0.75,2.5) -- (2,0);
\draw (0.75,2.5) -- (-2,0);
\draw (0.75,2.5) -- (2,0);

\draw (-2,0) -- (-1,-2.5);
\draw (2,0) -- (1,-2.5);

\draw (0,-1.5) -- (-1,-2.5) -- (1,-2.5) -- (0,-1.5);

\draw[ultra thick] (-0.75,2.5) -- (-2,0) -- (0.75,2.5) -- (2,0) -- (-0.75,2.5);
\draw[ultra thick] (-0.5,0) -- (-0.5,0.75) -- (0.5,0) -- (0.5,0.75) -- (-0.5,0);

\draw (0.75,2.5)node{}node[above,nodelabel]{$v$};
\draw (-0.75,2.5)node{}node[above,nodelabel]{$u$};

\draw (-2,0)node{};
\draw (2,0)node{};

\draw (0,-1.5)node{};
\draw (-1,-2.5)node{};
\draw (1,-2.5)node{};

\draw (0,-0.75)node{};
\draw (-0.5,0.75)node{};
\draw (0.5,0.75)node{};

\draw (-0.5,0)node{};
\draw (0.5,0)node{};

\draw (0,-3.5)node[nodelabel]{$(b)$};

\end{tikzpicture}
\hspace*{0.5in}
\begin{tikzpicture}[scale=1]
\draw[dashed] (0,-1.5) circle (1.15);
\draw (0.9,-1.5)node[nodelabel]{$C$};

\draw (0,-0.75) -- (0,0);
\draw (-0.5,-2.25) -- (-1.5,-3.5);
\draw (0.5,-2.25) -- (1.5,-3.5);

\draw (-0.5,-1.5) -- (0,-0.75);
\draw (-0.5,-1.5) -- (-0.5,-2.25);
\draw (-0.5,-1.5) -- (0.5,-2.25);
\draw (0.5,-1.5) -- (0,-0.75);
\draw (0.5,-1.5) -- (-0.5,-2.25);
\draw (0.5,-1.5) -- (0.5,-2.25);

\draw (0.75,1.5) -- (-1.5,0) -- (-0.75,1.5) -- (0,0) -- (0.75,1.5) -- (1.5,0) -- (-0.75,1.5);

\draw (-1.5,0) -- (-1.5,-3.5);
\draw (1.5,0) -- (1.5,-3.5);

\draw (-1.5,-3.5) -- (1.5,-3.5);

\draw[ultra thick] (-1.5,0) -- (-0.75,1.5) -- (1.5,0) -- (0.75,1.5) -- (-1.5,0);
\draw[ultra thick] (-0.5,-1.5) -- (-0.5,-2.25) -- (0.5,-1.5) -- (0.5,-2.25) -- (-0.5,-1.5);

\draw (0.75,1.5)node{}node[above,nodelabel]{$v$};
\draw (-0.75,1.5)node{}node[above,nodelabel]{$u$};

\draw (0,0)node{};
\draw (-1.5,0)node{};
\draw (1.5,0)node{};

\draw (-1.5,-3.5)node{};
\draw (1.5,-3.5)node{};

\draw (0,-0.75)node{};
\draw (-0.5,-2.25)node{};
\draw (0.5,-2.25)node{};

\draw (-0.5,-1.5)node{};
\draw (0.5,-1.5)node{};

\draw (0,-4.4)node[nodelabel]{$(c)$};

\end{tikzpicture}
\vspace*{-0.2in}
\caption{Illustrations for cases 3 and 4 in the proof of Theorem~\ref{thm:main}}
\label{fig:Cases-2.3-2.4}
\end{figure}

\medskip
\noindent
{\bf Case 4:} $G_1$ is the Murty graph (up to multiple edges joining the two noncubic vertices).

\smallskip
\noindent
The graph $G$ is a splicing of the Murty graph (that is, $G_1$) and $G_2 = K_{3,3}$.
Let $u$ and $v$ denote the two noncubic vertices of $G_1$.
Since the contraction vertex $\overline{x}$
of $G_1$ is cubic, it is distinct from $u$ and $v$.
It follows from the automorphisms of $G_1$ that the underlying simple graph of $G$
is isomorphic to either the graph shown in Figure~\ref{fig:Cases-2.3-2.4}$(b)$ plus the edge $uv$,
or to the graph shown in Figure~\ref{fig:Cases-2.3-2.4}$(c)$ plus the edge $uv$.
In either case, $G$ contains an even conformal bicycle, contrary to our assumption.

\bigskip
This completes the proof of the Main Theorem~(\ref{thm:main}).
\end{proofOf}

\section{Bricks}
\label{sec:bricks}

Recall that bricks are those nonbipartite matching covered graphs that are free of nontrivial tight cuts.
Edmonds, Lov{\'a}sz and Pulleyblank \cite{elp82} proved the following deep result.

\begin{thm}
A graph $G$, of order at least four, is a brick if and only if $G-u-v$ is connected and matchable, for all pairs of distinct vertices
$u,v \in V(G)$.
\end{thm}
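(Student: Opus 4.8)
The statement is the classical characterization of bricks due to Edmonds, Lov\'asz and Pulleyblank. Since a graph on at least four vertices is $3$-connected exactly when $G-u-v$ is connected for every pair of distinct vertices, the theorem asserts that the bricks are precisely the $3$-connected \emph{bicritical} graphs, where $G$ is bicritical if $G-u-v$ has a perfect matching for all distinct $u,v$. I would prove the two implications separately, using throughout the Tutte--Berge formula and the notion of a \emph{barrier}, that is, a set $B\subseteq V(G)$ with $o(G-B)=|B|$, where $o(\cdot)$ counts odd components; recall that a graph with a perfect matching satisfies $o(G-S)\le|S|$ for every $S$, so in a matching covered graph every singleton is a barrier, and that bicriticality forbids a barrier $B$ with $|B|\ge2$ (if $u,v\in B$ then $G-u-v$ has $o\bigl((G-u-v)-(B\setminus\{u,v\})\bigr)=|B|>|B|-2=|B\setminus\{u,v\}|$, hence no perfect matching).

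For the easy part of the ``if'' direction, suppose $G$ is connected of order at least four with every $G-u-v$ connected and matchable. Extending a perfect matching of $G-u-v$ by the edge $uv$ produces a perfect matching of $G$ through $uv$, for each edge $uv$, so $G$ is matching covered. And $G$ cannot be bipartite: a bipartite graph with a perfect matching has equal colour classes of size at least two, and deleting two vertices of one class leaves colour classes of unequal size, destroying matchability.

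For the ``only if'' direction, let $G$ be a brick. To see that $G$ is bicritical, assume $G-u-v$ has no perfect matching; Tutte--Berge yields a set $S$ with $o\bigl((G-u-v)-S\bigr)\ge|S|+2$, so $B:=S\cup\{u,v\}$ is a barrier of $G$ with $|B|\ge2$. In a matching covered graph a barrier has no even components (an edge from an even component of $G-B$ to $B$ would lie in no perfect matching), and every perfect matching meets each odd component of $G-B$ in exactly one cut edge, running to a distinct vertex of $B$; hence if some odd component $Z$ of $G-B$ has $|V(Z)|\ge3$ then $\partial(V(Z))$ is a nontrivial tight cut, whereas if every odd component is a single vertex then no edge inside $B$ could lie in a perfect matching and $G$ is bipartite --- either way contradicting that $G$ is a brick. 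To see that $G$ is $3$-connected, first note $G$ has no cut vertex: if $G-v$ were disconnected, parity (at most one odd component) forces an even component $K$, and the edges from $K$ to $v$ lie in no perfect matching. Next, if $\{u,v\}$ were a $2$-cut with a component $C$ of $G-u-v$, then a routine parity count shows that $\partial(V(C))$ is a nontrivial tight cut when $|V(C)|$ is odd --- necessarily at least three, the case of a single vertex adjacent only to $u,v$ being handled separately --- and that $\partial(V(C)\cup\{u\})$ is a nontrivial tight cut when $|V(C)|$ is even; again a contradiction.

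It remains to prove the substantive half of the ``if'' direction: a $3$-connected bicritical graph $G$ on at least four vertices has no nontrivial tight cut. Suppose $\partial(X)$ is one. Then $|X|$ and $|\overline X|$ are odd and at least three, and $G[X]$ and $G[\overline X]$ are connected: if $G[X]$ split into components then exactly one of them could have odd order (the one containing the $X$-endpoint of the unique cut edge of any perfect matching, which is therefore the same component for all of them), and then edges from the remaining components to $\overline X$ would lie in no perfect matching. The boundary $A:=\{v\in X:\ v$ has a neighbour in $\overline X\}$ satisfies $|A|\ge3$, for otherwise removing $A$ separates $X\setminus A$ from $\overline X$. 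Moreover, bicriticality forces $G[X]-a$ to have a perfect matching for every $a\in A$ (for a cut edge $uv$ with $u\in X$, a perfect matching of $G-u-v$ together with $uv$ is a perfect matching of $G$ whose unique cut edge is $uv$, so it lies inside $G[X]\cup G[\overline X]$), whence $G[X]$ has Gallai--Edmonds deficiency exactly one with $A\subseteq D(G[X])$, and symmetrically on the other side. At this point I would invoke the classical structural lemma that a matching covered graph possessing a nontrivial tight cut must possess either a barrier of size at least two or a vertex cut of size at most two; the first contradicts bicriticality and the second contradicts $3$-connectedness, which finishes the proof. The main obstacle is exactly this lemma --- the assertion that every nontrivial tight cut of a matching covered graph is a ``barrier cut'' or arises from a $2$-separation. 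Its proof is a careful simultaneous bookkeeping of the Gallai--Edmonds structures of $G[X]$ and $G[\overline X]$ against the cut edges across $\partial(X)$: the ``canonical'' odd component $\overline X$ of $G-A(G[X])$, the factor-critical pieces of $D(G[X])$ that do or do not send an edge across, and the even pieces in $C(G[X])$, must be tracked on both shores at once to exhibit the promised barrier or small separator. Everything else in the argument is parity arithmetic and Tutte--Berge reasoning, so I expect essentially all of the difficulty to be concentrated in this structural lemma.
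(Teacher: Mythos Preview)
The paper does not prove this theorem; it merely quotes it as a deep result of Edmonds, Lov\'asz and Pulleyblank, with a citation to \cite{elp82}. There is thus no proof here to compare your attempt against.

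On its own merits: your outline follows the standard route and correctly locates the difficulty. The ``only if'' direction is essentially fine. Your barrier argument for bicriticality is correct. For $3$-connectedness you make unnecessary work by splitting on the parity of $|V(C)|$: once bicriticality is established, $G-u-v$ has a perfect matching, so every component of $G-u-v$ is even and of order at least two, and then $\partial(V(C)\cup\{u\})$ is a nontrivial tight cut by a clean parity count --- this also disposes of your deferred ``single vertex'' case.

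The substantive gap is in the ``if'' direction. You reduce it to the assertion that a matching covered graph with a nontrivial tight cut must have a barrier of size at least two or a vertex cut of size at most two. That assertion is not an auxiliary lemma on the way to the theorem; given the easy direction you have already done, it \emph{is} the theorem (its contrapositive reads: a $3$-connected bicritical matching covered graph has no nontrivial tight cut). Your closing paragraph --- ``careful simultaneous bookkeeping of the Gallai--Edmonds structures of $G[X]$ and $G[\overline X]$ against the cut edges'' --- names the objects involved but gives no indication of the actual mechanism (an extremal or uncrossing argument on the family of tight cuts) that drives the proof in \cite{elp82}. So your proposal proves the easy half and correctly identifies, but does not prove, the hard half.
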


A consequence worth noting is that adding an edge to a brick yields another brick.
In this section, our goal is to prove Theorem~\ref{thm:BvN-PMc-bricks} that provides a complete characterization of bricks
that are \BvN\ and \PMc. However, before that, we discuss induction tools from existing literature
that are useful in proving results concerning bricks.

\subsection{Strictly thin edges}

An edge $e$ of a brick $G$ is {\it thin} if the retract of $G-e$ is also a brick.
Carvalho, Lucchesi and Murty \cite{clm06} proved the following result.

\begin{thm}
\label{thm:thin-edge}
Every brick, distinct from $K_4$, $\overline{C_6}$ and the Petersen graph, has a thin edge.
\end{thm}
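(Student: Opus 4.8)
The plan is to produce a thin edge by starting from an edge whose deletion ``almost'' leaves a brick, and then improving it. The natural object to start from is a \emph{$b$-invariant} edge --- an edge $e$ with $G-e$ matching covered and $b(G-e)=1$ --- since the ear-decomposition theory of Carvalho, Lucchesi and Murty guarantees that every brick other than $K_4$ and $\overline{C_6}$ has one. Fix such an edge $e=uv$ and let $J$ be the retract of $G-e$; since bicontraction does not change the number of bricks, $J$ is a near-brick. If $J$ has no nontrivial tight cut then $J$ is a brick and $e$ is thin, and we are done. So the whole difficulty lies in the case that $J$ carries a nontrivial tight cut $\partial(X)$.

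The first substantive step is to describe which sets $X$ can yield a nontrivial tight cut of $J$ that was not already tight in $G$. Such a cut must ``come from'' the deletion of $e$ together with the bicontraction of the cubic ends of $e$; using the Edmonds--Lov{\'a}sz--Pulleyblank characterization of bricks (bicriticality and $3$-connectivity), I would show that $X$ or $\overline{X}$ corresponds to a barrier, or a small vertex-separator of $G$, that is forced to involve $u$ and $v$ --- so that the nontrivial tight cuts of $J$ form a tightly constrained family determined by the local structure of $G$ around $u$ and $v$. This is what makes a descent possible: among all $b$-invariant edges of $G$, choose one, say $e$, for which the unique brick of $G-e$ has the largest number of vertices. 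If $e$ were not thin, the structural description of the tight cut $\partial(X)$ should let me exhibit another $b$-invariant edge $e'$ whose associated brick is strictly larger, contradicting the choice of $e$ --- \emph{unless} the obstruction stubbornly refuses to shrink, and this last situation is exactly the exception claimed in the theorem.

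The content of the theorem is that this exceptional situation occurs only for the Petersen graph, so the proof finishes by a contradiction argument: assuming $G \neq K_4, \overline{C_6}$ and that no $b$-invariant edge is thin, one extracts enough rigidity from the way the non-thin obstructions must overlap for \emph{every} choice of $b$-invariant edge to force $G$ to be cubic, $3$-connected, bicritical and of girth five, and then to identify it with the Petersen graph (either by quoting that Petersen is the unique such brick, or by reconstructing it from the forced local configuration). I expect the main obstacle to be precisely this global step --- simultaneously controlling the tight cuts of $G-e$ as $e$ ranges over all $b$-invariant edges, and showing that the reduction has no fixed point other than Petersen; by comparison the structural lemma on the shape of the tight cuts of $G-e$ is routine, and the existence of a $b$-invariant edge I would simply quote from the earlier work of Carvalho, Lucchesi and Murty.
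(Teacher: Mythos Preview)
The paper does not prove this theorem at all: it is quoted as a result of Carvalho, Lucchesi and Murty~\cite{clm06}, with no argument given. So there is no ``paper's own proof'' against which to compare your proposal; the statement functions here purely as an imported tool.

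That said, your outline is broadly faithful to the strategy actually used in~\cite{clm06}: one does start from a $b$-invariant edge (whose existence for bricks other than $K_4$ and $\overline{C_6}$ is the earlier theorem of the same authors), and one does run an extremal argument over such edges, analysing the tight cuts that can appear in $G-e$ and its retract. Your identification of the hard part is also correct --- the substance of the theorem lies in the structural analysis showing that the obstruction to thinness, if it persists for every $b$-invariant edge, forces the Petersen graph; in~\cite{clm06} this occupies many pages of case analysis on barriers and near-brick structure, and is far from the one-paragraph sketch you give. What you have written is therefore a reasonable \emph{plan} in the spirit of the original, but not a proof: the step ``the structural description of $\partial(X)$ should let me exhibit another $b$-invariant edge $e'$ whose associated brick is strictly larger'' hides essentially the entire content of the theorem, and your final identification of the Petersen graph via ``cubic, $3$-connected, bicritical, girth five'' is not how the exception is isolated in~\cite{clm06} (nor would those properties alone suffice). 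For the purposes of the present paper, the correct move is simply to cite~\cite{clm06}, as the authors do.
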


Carvalho et al. \cite{clm06} also described four simple expansion operations that may be applied to any brick to obtain
a larger brick. Given a brick $H$, the application of any of those four operations to $H$ results in a brick $G$
such that $G$ has a thin edge $e$ with the property that $H$ is the retract of $G-e$. We refer the reader to their
work for a precise description of the expansion operations. Their importance is due to the following result; see \cite[Theorem 36]{clm06}.

\begin{thm}
\label{thm:brick-expansion}
Let $H$ be a brick and let $G$ be a graph obtained from $H$ by one of the four expansion operations. Then $G$ is a brick.
\end{thm}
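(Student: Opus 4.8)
The plan is to prove that $G$ is a brick directly from the criterion of Edmonds, Lov{\'a}sz and Pulleyblank recalled above: since $H$ has order at least four and the expansion operations do not decrease the order, it suffices to show that $G-u-v$ is connected and matchable for every pair of distinct vertices $u,v\in V(G)$. Because the four operations re-wire $H$ in genuinely different ways, I would treat them one at a time; a uniform treatment is possible because each operation is \emph{local} --- it alters the incidences at a bounded number of vertices of $H$, and introduces a bounded number of new vertices and edges, among them one new edge $e$ (which will turn out to be a thin edge of $G$). Thus in each case the verification reduces to a short finite analysis of how the criterion for $H$ transports to $G$.

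For the connectivity condition, the starting point is the observation already noted above that adding an edge to a brick yields a brick; hence the ``add $e$'' part of an operation can only help, and it suffices to control the vertex re-wiring. Each operation replaces a vertex $w$ of $H$ (or, in the pure edge-addition case, nothing) by a small gadget with the properties that any two of its attachment vertices are joined by internally disjoint paths inside the gadget, and that the gadget stays suitably connected after the deletion of any one of its vertices. Given distinct $u,v\in V(G)$ and any two further vertices $s,t$ of $G-u-v$, I would take paths between $s$ and $t$ in $H$ avoiding the images of $u$ and $v$ in $H$ --- such paths exist because $H$ is $3$-connected --- and reroute them through the gadget whenever they enter the modified region, using its internal connectivity together with the edge $e$ to supply the local detours. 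This yields the connectivity of $G-u-v$ for every choice of $u\ne v$.

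The matchability condition is the crux, and I expect it to be the main obstacle. Fix distinct $u,v\in V(G)$; we must produce a perfect matching of $G-u-v$, and the only leverage available is that $H$ is a brick, hence that $H-u'-v'$ is matchable for all distinct $u',v'\in V(H)$. I would organise the argument by the position of $u$ and $v$ relative to the gadget(s): (i) if both $u$ and $v$ are left undisturbed by the operation, start from a perfect matching $M'$ of $H-u-v$ and convert it into one of $G-u-v$ by re-matching locally inside the gadget, where the new edge $e$ and the new vertices are exactly what is needed to absorb the change in parity and degree caused by the re-wiring; (ii) if $u$ or $v$ lies inside a gadget (a new vertex, or a ``part'' of the split vertex $w$), delete from $H$ the vertex $w$ together with a carefully chosen second vertex, take a perfect matching of the resulting graph, and lift it to $G-u-v$, again using $e$ and the remaining gadget vertices to cover whatever is left uncovered; (iii) if both $u$ and $v$ lie inside gadget(s), reduce directly to a perfect matching of $H$. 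Matching the ends of $e$ to the vertices that must be repaired, and keeping the parities straight in case (ii), is the delicate point: the four operations of \cite{clm06} are designed precisely so that this bookkeeping always closes, and I expect the bulk of the write-up to be exactly this case check, carried out once per operation.

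Having established, for every pair of distinct vertices $u,v$, that $G-u-v$ is connected and matchable, and noting that $G$ has order at least four, the theorem of Edmonds, Lov{\'a}sz and Pulleyblank yields that $G$ is a brick, which completes the proof.
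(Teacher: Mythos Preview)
The paper does not prove this theorem. It is quoted from \cite[Theorem~36]{clm06}, and the surrounding text explicitly refers the reader to that paper both for the statement and for the description of the four expansion operations. So there is no ``paper's own proof'' to compare against; the result is imported wholesale.

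Your proposal is not a proof but a plan for one: you announce that you would verify the Edmonds--Lov{\'a}sz--Pulleyblank criterion for $G$ by a case analysis over the four expansion operations, and over the position of the deleted pair $\{u,v\}$ relative to the gadget, but you carry out none of the cases. The strategy is reasonable and is in fact close in spirit to how such results are established, but as written it contains no verifiable content --- all the work is deferred to ``I expect the bulk of the write-up to be exactly this case check''. In particular, you never state what the four operations are (the present paper does not define them either), so the reader cannot even check that the gadgets have the local connectivity and parity properties you assert. If you wish to supply an independent proof rather than cite \cite{clm06}, you must at minimum write out the four operations and then actually perform the matchability case analysis you outline; the connectivity part is comparatively routine once $3$-connectivity of $H$ is on the table.
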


Thus, any brick may be
generated from one of the three bricks ($K_4, \overline{C_6}$ and the Petersen graph) by means of the
four expansion operations.
In the same paper, Carvalho et al. used this generation procedure to prove
Theorem~\ref{thm:BvN-planar-bricks} that provides a complete characterization
of \BvN\ planar bricks.

\smallskip
However, one of the problems with this generation procedure is that in order to generate certain simple bricks\footnote{A simple brick is a brick that is devoid of multiple edges.}
(such as the odd wheels of order six or more) one may have to allow intermediate bricks that are not
necessarily simple. When proving other results using this generation procedure,
the presence of multiple edges requires greater case analyis. In order to circumvent this difficulty, we shall instead
use the stronger notion of a strictly thin edge, as is done in the work of Kothari and Murty \cite{komu16}.

\smallskip
A thin edge $e$ of a simple brick $G$ is {\it strictly thin} if the retract of $G-e$ is a simple brick. With each strictly thin
edge $e$ of $G$ there is an associated number, called its {\it index}, which is:

\begin{itemize}
\item {\it zero}, if both ends of $e$ are noncubic in $G$;
\item {\it one}, if exactly one end of $e$ is cubic in $G$;
\item {\it two}, if both ends of $e$ are cubic in $G$, and $e$ does not lie in a triangle;
\item {\it three}, if both ends of $e$ are cubic in $G$, and $e$ lies in a triangle.
\end{itemize}

The following proposition is easily verified (and appears in \cite{komu16}).

\begin{prop}
\label{prop:brick-minus-thin-edge-cases}
Let $G$ be a simple brick, and let $e$ denote a strictly thin edge of $G$, and let $H$ be the retract of $G-e$.
If the index of $e$ is zero, then $H=G-e$.
If the index of $e$ is one, then $G-e$ has precisely one vertex of degree two;
and $H$ has just one contraction vertex, and its degree is at least four.
If the index of $e$ is two, then $G-e$ has precisely two vertices of degree two, and they have no common neighbour;
and $H$ has two contraction vertices, and their degrees are at least four.
If the index of $e$ is three, then $G-e$ has precisely two vertices of degree two, and they have a common neighbour;
and $H$ has just one contraction vertex, and its degree is at least five.
\end{prop}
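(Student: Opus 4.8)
The plan is to read off each case from the definitions. Let $e=xy$ be a strictly thin edge of the simple brick $G$, so that the retract $H$ of $G-e$ is itself a simple brick. Recall that the retract is obtained by bicontracting every degree-two vertex, and that (by the Edmonds--Lov\'asz--Pulleyblank theorem) $G-x-y$ is connected and matchable; in particular $G-e$ is matching covered (indeed it is a brick up to bicontractions), so every vertex of $G-e$ has degree at least two. The only vertices whose degree could have dropped below three in passing from $G$ to $G-e$ are $x$ and $y$, and each drops by exactly one. So the possible degree-two vertices of $G-e$ are precisely those of $x,y$ that were cubic in $G$; this is what splits the analysis into the four index cases, and in each case one must (i) count the degree-two vertices of $G-e$, (ii) decide whether they have a common neighbour, and (iii) determine how many contraction vertices $H$ has and bound their degrees from below.

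The first two cases are immediate. If the index is zero, neither $x$ nor $y$ is cubic in $G$, so $G-e$ has minimum degree three, hence has no degree-two vertex and equals its own retract: $H=G-e$. If the index is one, exactly one of $x,y$ — say $x$ — is cubic in $G$, so $x$ becomes the unique degree-two vertex of $G-e$; bicontracting it identifies its two neighbours into a single contraction vertex $z$. One then checks $\deg_H(z)\ge 4$: the two neighbours of $x$ in $G-e$ are distinct (else $G$ would have a multiple edge or $x$ would have degree two in $G$, both excluded), call them $u,v$; after bicontraction $z$ inherits the edges of $u$ and $v$ other than $xu,xv$, and since $\deg_{G-e}(u),\deg_{G-e}(v)\ge 2$ (they may themselves have been cubic in $G$ and lost an end of $e$), a short argument — using that $H$ is a \emph{simple} brick, so in particular $2$-connected with no vertex of degree two — yields $\deg_H(z)\ge 4$; equality with $3$ is ruled out precisely because a brick other than $K_4$ cannot have a cubic vertex created here while $H$ remains simple. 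For index two, both $x$ and $y$ are cubic in $G$ and $e$ is in no triangle, so $x$ and $y$ have no common neighbour in $G$; hence $G-e$ has exactly the two degree-two vertices $x,y$, and they have no common neighbour (a common neighbour $w$ of $x$ and $y$ in $G-e$ would, together with $e$, give a triangle $xyw$ in $G$). Bicontracting $x$ and $y$ produces two distinct contraction vertices (distinct precisely because $x,y$ share no neighbour, so the two bicontractions act on disjoint edge sets), and the same degree estimate as before gives each of them degree at least four in the simple brick $H$. For index three, $x$ and $y$ are both cubic and $e$ lies in a triangle $xyw$; then $w$ is the common neighbour of the two degree-two vertices $x,y$ of $G-e$, and bicontracting both $x$ and $y$ identifies, through $w$, the four relevant vertices into a \emph{single} contraction vertex, whose degree in the simple brick $H$ one checks to be at least five by the analogous counting argument (the drop-by-one per bicontraction together with simplicity of $H$ forbids degrees three or four).

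The routine part is the degree bookkeeping; the one place deserving care — and the step I expect to be the main obstacle — is justifying the \emph{strict} lower bounds ($\ge 4$, resp.\ $\ge 5$) on the degrees of the contraction vertices rather than the obvious $\ge 3$. This is exactly where the hypothesis that $e$ is \emph{strictly} thin (so $H$ is simple, not merely a brick) is used: if a contraction vertex $z$ had degree three in $H$, then the structure of the bicontraction would force a multiple edge in $H$ (for the index-one and index-two cases, two of the edges at $z$ would be parallel), contradicting simplicity; the index-three case is analogous with one more unit of slack because two bicontractions feed into the same vertex. I would spell this parallel-edge argument out once and invoke it in each case. Since this proposition is stated as ``easily verified'' and is quoted verbatim from \cite{komu16}, it is legitimate to keep the write-up terse and refer the reader there for the details of the degree estimates.
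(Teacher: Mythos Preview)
The paper does not prove this proposition at all; it states that the result ``is easily verified (and appears in \cite{komu16})'' and moves on. So there is no argument in the paper to compare yours against, and your closing suggestion --- to keep the write-up terse and defer to \cite{komu16} for the degree estimates --- is exactly what the paper itself does.

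That said, the heuristic you offer for the degree bounds does not work as stated, and since you flag it as the main obstacle it is worth being precise about why. Take the index-one case, with $e=xy$, $x$ cubic, and $u,v$ the other two neighbours of $x$. If $uv\notin E(G)$ then bicontracting $x$ gives $\deg_H(z)=\deg_G(u)+\deg_G(v)-2\ge 4$, and the bound is immediate. But if $uv\in E(G)$ the edge $uv$ becomes a loop under the bicontraction and is discarded, so $\deg_H(z)=\deg_G(u)+\deg_G(v)-4$. When this equals $3$ --- say $\deg_G(u)=3$ with outside neighbour $a$, and $\deg_G(v)=4$ with outside neighbours $b,c$ --- the three edges at $z$ go to $a,b,c$, and simplicity of $H$ only forces $a\notin\{b,c\}$ (i.e.\ that $u,v$ have no common neighbour besides $x$); it does \emph{not} force two of the edges at $z$ to be parallel. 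So the sentence ``degree three would force a multiple edge'' is false as written, and the analogous claim for index three fails for the same reason. The real content of the degree bounds is that the neighbours of the cubic end(s), other than along $e$, are themselves nonadjacent in $G$; that is what rules out the loop-loss and makes the na\"{\i}ve count go through. This requires an argument using the brick hypothesis on $G$ (or the fact that $H$ is a brick), not merely the simplicity of $H$. If you want to include an actual proof rather than cite \cite{komu16}, that nonadjacency lemma is what you must supply; otherwise drop the parallel-edge heuristic and simply defer, as the paper does.
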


We refer the reader to \cite[Figure 2]{komu16} for examples of strictly thin edges. It is easily verified that
odd wheels do not have strictly thin edges. There are four other infinite families of simple bricks that are devoid of strictly thin edges.
These are prisms, truncated biwheels, staircases and M{\"o}bius ladders. We refer the reader to \cite{komu16} for a description of these
families. Norine and Thomas \cite{noth07} proved the following.

\begin{thm}
\label{thm:strictly-thin-edge}
Let $G$ be a simple brick. If $G$ has no strictly thin edge then $G$ is either the Petersen graph or is an odd wheel, a prism, a truncated biwheel,
a staircase or a M{\"o}bius ladder.
\end{thm}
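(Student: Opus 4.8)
The plan is to leverage the thin-edge theory already developed in this section. First I would dispose of the three sporadic bricks at once: $K_4$ is the smallest odd wheel, $\overline{C_6}$ is the triangular prism, and the Petersen graph occurs explicitly in the conclusion, so the statement holds for each. Henceforth assume $G$ is a simple brick with no strictly thin edge and $G\notin\{K_4,\overline{C_6},\mathrm{Petersen}\}$. By Theorem~\ref{thm:thin-edge}, $G$ has at least one thin edge, and our hypothesis is the strong assertion that \emph{every} thin edge $e$ of $G$ has the property that the retract $H$ of $G-e$ --- a brick, by thinness of $e$ --- is \emph{not} simple. The task is to show that this rigidity forces $G$ into one of the five named families.

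Next I would run the index classification of Proposition~\ref{prop:brick-minus-thin-edge-cases}. If some thin edge $e$ had index zero, then $H=G-e$ would be a subgraph of the simple graph $G$ and hence simple, contradicting the hypothesis; so every thin edge of $G$ has a cubic endpoint. The non-simplicity of $H$ arises entirely from bicontracting the degree-two vertices of $G-e$: for a cubic end $v$ of $e$, with $N(v)=\{a,b,w\}$ and $e=vw$, bicontracting $v$ inserts the edge $ab$, which is parallel precisely when $ab\in E(G)$ --- that is, when $v$ lies in a triangle of $G$ avoiding $e$ (this covers indices one and two; in the index-three case, where the two ends of $e$ are cubic and share a neighbour, non-simplicity translates analogously into a short collision involving both ends of $e$ and that common neighbour). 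Thus the hypothesis is equivalent to a uniform local statement: \emph{every thin edge of $G$ has a cubic endpoint lying in a triangle}.

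The combinatorial heart of the proof is then to convert this local condition into the global classification. I would proceed by induction using the generation theorem for bricks (Theorems~\ref{thm:thin-edge} and~\ref{thm:brick-expansion}): choose a thin edge $e$, let $H$ be the retract of $G-e$; then $H$ is a smaller brick and $G$ is obtained from $H$ by one of the four expansion operations, so the neighbourhood structure of $G$ is tightly constrained by that of $H$. One then determines exactly which simple bricks $G$ have the property that every admissible expansion preimage is non-simple --- equivalently, by the previous paragraph, that every thin edge has a cubic end in a triangle --- and the claim is that these are precisely the odd wheels, prisms, truncated biwheels, staircases and M{\"o}bius ladders, together with the Petersen graph (which has no thin edge at all, being one of the three exceptions in Theorem~\ref{thm:thin-edge}). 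Verifying that each listed family has this property is a direct check --- in each of them every cubic vertex is surrounded by enough triangles that deleting any candidate thin edge and bicontracting unavoidably creates a parallel edge --- while the converse is the substantive content: any simple brick whose cubic vertices are \emph{not} tiled so densely by triangles must admit a strictly thin edge.

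I expect the main obstacle to be exactly this converse. Showing that the condition ``every thin edge has a cubic endpoint in a triangle'' is restrictive enough to permit nothing beyond the five families and Petersen requires the full strength of the Edmonds--Lov{\'a}sz--Pulleyblank characterization ($G-u-v$ connected and matchable for all pairs $u,v$), the theory of removable edges and removable ears in bricks, and a lengthy but essentially finite analysis of how triangles can cover a brick; eliminating sporadic configurations is the delicate part. This is precisely the technical core of the work of Norine and Thomas \cite{noth07}, on which the theorem rests, and I would carry the argument out along those lines.
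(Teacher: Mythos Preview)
First, note that the paper does not actually prove this theorem: it is quoted from Norine and Thomas \cite{noth07} as a known result and used as a black box throughout Section~\ref{sec:bricks}. So there is no ``paper's own proof'' to compare against --- your proposal is an attempt to reprove a deep external result.

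That said, there is a genuine error in your analysis. You have misread the definition of bicontraction. Bicontracting a degree-two vertex $v$ with neighbours $a$ and $b$ does \emph{not} delete $v$ and insert the edge $ab$; it contracts both edges $va$ and $vb$, merging $a$, $v$ and $b$ into a single vertex. Consequently, when $e=u_0v_0$ with $u_0$ cubic and other neighbours $u_1,u_2$, the retract of $G-e$ acquires a multiple edge not when $u_1u_2\in E(G)$ --- that edge becomes a loop and is discarded --- but rather when $u_1$ and $u_2$ share a common neighbour distinct from $u_0$ (and, in the index-two case, also when there are at least two edges between $\{u_1,u_2\}$ and $\{v_1,v_2\}$). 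Your asserted equivalence ``retract non-simple $\Longleftrightarrow$ some cubic end of $e$ lies in a triangle avoiding $e$'' is therefore false, and so is the local structural condition you extract from it.

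This error is not cosmetic: your ``direct check'' that each listed family lacks strictly thin edges relies on triangles, but M\"obius ladders of order eight or more and odd prisms of order ten or more are triangle-free. In the Wagner graph $M_8$, for instance, no vertex lies in any triangle, yet every thin edge fails to be strictly thin --- because after deleting any edge and bicontracting, the merged neighbour-sets inevitably create parallel edges via common neighbours. The actual Norine--Thomas argument must therefore work with the correct (and more delicate) common-neighbour obstruction, and the structural analysis that pins down exactly the five families from that obstruction is considerably more involved than your sketch suggests.
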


As is done in \cite{komu16}, we say that a simple brick is {\it \NT} if it is free of strictly thin edges.
(Thus, these are exactly the graphs that appear in the above theorem statement.)
The following is an immediate consequence of Theorem~\ref{thm:strictly-thin-edge}.

\begin{thm}
\label{thm:strictly-thin-generation}
Given any simple brick $G$, there exists a sequence $G_1, G_2, \dots, G_r$ of simple bricks such that (i) $G_1$
is a \NT\ brick; (ii) $G_r=G$; and (iii) for $2 \leq i \leq r$, $G_i$ has a strictly thin edge $e_i$ such that $G_{i-1}$
is the retract of $G_i-e_i$.
\end{thm}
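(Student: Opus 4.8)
The plan is to argue by induction on the number of edges of $G$, with Theorem~\ref{thm:strictly-thin-edge} supplying essentially all of the content; the statement is, after all, described in the excerpt as an immediate consequence of that theorem, so what remains is bookkeeping.

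For the base of the induction (which is simultaneously the ``stopping'' case of the recursion): if the simple brick $G$ has no strictly thin edge, then Theorem~\ref{thm:strictly-thin-edge} tells us that $G$ is the Petersen graph or an odd wheel, a prism, a truncated biwheel, a staircase or a M\"obius ladder --- that is, $G$ is a \NT\ brick. In this case I would take $r := 1$ and $G_1 := G$; condition (i) holds by the previous sentence, (ii) is immediate, and (iii) is vacuous.

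For the inductive step, suppose instead that $G$ has a strictly thin edge $e$, and let $H$ denote the retract of $G-e$. By the very definition of a strictly thin edge, $H$ is a simple brick. The one point I would verify carefully is that $|E(H)| < |E(G)|$: indeed $|E(G-e)| = |E(G)| - 1$, and the retract is obtained from $G-e$ by a (possibly empty) sequence of bicontractions of degree-two vertices, each of which deletes the two edges incident with such a vertex and introduces no new edge, so $|E(H)| \le |E(G-e)| < |E(G)|$. Hence the induction hypothesis applies to $H$ and yields a sequence $G_1, G_2, \dots, G_s$ of simple bricks such that $G_1$ is a \NT\ brick, $G_s = H$, and for $2 \le i \le s$ the brick $G_i$ has a strictly thin edge $e_i$ with $G_{i-1}$ the retract of $G_i - e_i$. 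Setting $G_{s+1} := G$ and $e_{s+1} := e$, the extended sequence $G_1, \dots, G_{s+1}$ witnesses the claim for $G$ (with $r = s+1$), since $G_s = H$ is by construction the retract of $G_{s+1} - e_{s+1}$.

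I do not expect a genuine obstacle here. The only thing that could go wrong is the well-foundedness of the induction, which is secured by the edge-count monotonicity established above; equivalently, one observes that the recursion must terminate at a brick with no strictly thin edge, and in any case the smallest brick $K_4 = W_3$ is already a \NT\ brick. Everything else is a direct unwinding of the definition of ``strictly thin'' together with Theorem~\ref{thm:strictly-thin-edge}.
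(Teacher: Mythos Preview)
Your proposal is correct and is exactly the natural unpacking of what the paper intends: the paper gives no explicit proof, merely stating that the theorem ``is an immediate consequence of Theorem~\ref{thm:strictly-thin-edge}'', and your induction on $|E(G)|$ is precisely how that immediate consequence is realised. The edge-count monotonicity check you include is the only point that needs a word, and you handle it correctly.
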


The above theorem implies that every simple brick can be generated from one of the \NT\ bricks by repeated
application of the expansion operations in some sequence, such that at each step we have a simple brick.
Kothari and Murty \cite{komu16} used this generation procedure to provide complete characterizations of planar bricks
that are $K_4$-free, and those that are $\overline{C_6}$-free.

\smallskip
We will use this generation procedure in order to prove Theorem~\ref{thm:BvN-PMc-bricks}.
Let $G$ be a brick and let $e$ be a thin edge of $G$. If $G$ is \PMc, then it follows from
Theorem~\ref{thm:PMc-iff-no-even-conformal-bicycle}
that $G-e$ is also \PMc, and so is the retract of $G-e$ by invoking Proposition~\ref{prop:PMc-retract}.
A similar fact holds for the \BvN\ property by invoking Theorem~\ref{thm:BvN-iff-no-odd-conformal-bicycle}
and Proposition~\ref{prop:BvN-retract}.

\begin{prop}
\label{prop:BvN-PMc-retract}
Let $G$ be a brick, let $e$ be a thin edge of $G$, and let $H$ be the retract of $G-e$. Then the following statements hold.
\begin{enumerate}[(i)]
\item If $G$ is \BvN\ then $H$ is also \BvN.
\item If $G$ is \PMc\ then $H$ is also \PMc.
\end{enumerate}
\end{prop}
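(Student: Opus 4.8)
The plan is to prove both parts by combining two facts that the excerpt has already made available: (a) the characterizations of \PMc\ (resp.\ \BvN) matchable graphs in terms of the absence of even (resp.\ odd) conformal bicycles, namely Theorems~\ref{thm:PMc-iff-no-even-conformal-bicycle} and~\ref{thm:BvN-iff-no-odd-conformal-bicycle}; and (b) the retract propositions, namely Propositions~\ref{prop:PMc-retract} and~\ref{prop:BvN-retract}, which say that a \mcg\ is \PMc\ (resp.\ \BvN) if and only if its retract is. Both parts have exactly the same shape, so I would prove them in parallel.

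First I would establish the ``edge-deletion'' step: if $G$ is a brick and $e$ is a thin edge, then $G-e$ is matching covered (this is part of what ``thin'' gives us, since the retract of $G-e$ is a brick, hence matchable and matching covered, and a \mcg\ and its retract share this status), and moreover $G-e$ inherits whichever of the two properties $G$ has. For the \PMc\ direction: if $G$ is \PMc\ then by Theorem~\ref{thm:PMc-iff-no-even-conformal-bicycle} it has no even conformal bicycle; but any even conformal bicycle of $G-e$ would also be one of $G$ (deleting an edge cannot create vertex-disjoint cycles or a perfect matching in the leftover, and the two cycles together with the near-perfect-matching of the rest still use only edges present in $G$), so $G-e$ has no even conformal bicycle and is therefore \PMc. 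The \BvN\ direction is identical with ``even'' replaced by ``odd'' and Theorem~\ref{thm:BvN-iff-no-odd-conformal-bicycle} in place of Theorem~\ref{thm:PMc-iff-no-even-conformal-bicycle}.

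Second, I would pass from $G-e$ to its retract $H$. By Proposition~\ref{prop:PMc-retract}, a \mcg\ is \PMc\ iff its retract is, so since $G-e$ is \PMc, so is $H$; likewise Proposition~\ref{prop:BvN-retract} handles the \BvN\ case. Chaining the two steps gives: $G$ \PMc\ $\Rightarrow$ $G-e$ \PMc\ $\Rightarrow$ $H$ \PMc, and similarly for \BvN. That is exactly statements~(i) and~(ii), so the lemma follows.

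The only point requiring a little care --- and the closest thing to an obstacle --- is the first step's claim that deleting an edge does not destroy the witnessing structure: one must observe that an even (resp.\ odd) conformal bicycle of $G-e$ is literally a subgraph of $G-e$, hence of $G$, and that the perfect matching of $G-e$ minus the two cycles is a perfect matching of $G$ minus those cycles; conversely we only need the contrapositive direction, ``$G$ has no such bicycle $\Rightarrow$ $G-e$ has none,'' which is the trivial containment direction. No essential use of $e$ being \emph{thin} is needed for this inheritance --- thinness is only invoked to guarantee $G-e$ (and hence $H$) is matching covered and that $H$ is a brick, so that Propositions~\ref{prop:PMc-retract} and~\ref{prop:BvN-retract} apply. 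I expect the whole argument to occupy only a few lines.
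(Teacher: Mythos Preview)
Your proposal is correct and matches the paper's argument essentially verbatim: the paper proves this proposition (in the paragraph immediately preceding it) by observing that Theorem~\ref{thm:PMc-iff-no-even-conformal-bicycle} (resp.\ Theorem~\ref{thm:BvN-iff-no-odd-conformal-bicycle}) forces $G-e$ to inherit the property from $G$, and then Proposition~\ref{prop:PMc-retract} (resp.\ Proposition~\ref{prop:BvN-retract}) transfers it to the retract $H$. Your remark that thinness is needed only to ensure $G-e$ is matching covered is also accurate.
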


In order to characterize the bricks that are \BvN\ as well as \PMc, we first need to find out which \NT\ bricks have
both of these properties.
This is done easily using Theorems~\ref{thm:PMc-iff-no-even-conformal-bicycle}~and~\ref{thm:BvN-iff-no-odd-conformal-bicycle}.

\begin{prop}
\label{prop:NT-brick-BvN-PMc}
The odd wheels are the only \NT\ bricks that are \BvN\ as well as \PMc.
\end{prop}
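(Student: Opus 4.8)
The plan is to go through the \NT\ bricks one family at a time---odd wheels, prisms, truncated biwheels, staircases, M\"obius ladders, and the Petersen graph---and for each non-wheel family exhibit an explicit conformal bicycle, while for odd wheels invoke the observation (already made in the excerpt) that every cycle other than the rim passes through the hub, so that an odd wheel has no two vertex-disjoint cycles and is therefore both \BvN\ and \PMc\ by Theorems~\ref{thm:BvN-iff-no-odd-conformal-bicycle}~and~\ref{thm:PMc-iff-no-even-conformal-bicycle}. Thus the whole content of the proposition is the ``only'' direction: each \NT\ brick that is \emph{not} an odd wheel fails to be \BvN\ or fails to be \PMc.

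Concretely, I would argue as follows. First note that the members of each of these families have a standard, highly symmetric drawing (two ``long'' cycles, or a ladder-like structure), so locating vertex-disjoint cycles whose removal leaves a perfectly matchable remainder is a finite check once the family is small, and a uniform construction once it is parametrised. For a prism $C_n \times K_2$ with $n \geq 3$: the two triangles/$n$-cycles $C_1, C_2$ that form the two ``sides'' are vertex-disjoint and $G - V(C_1) - V(C_2)$ is empty, hence trivially matchable; when $n$ is even this is an even conformal bicycle (so $G$ is not \PMc), and when $n$ is odd it is an odd conformal bicycle (so $G$ is not \BvN). The same ``two boundary cycles'' idea handles M\"obius ladders directly, and for truncated biwheels and staircases one picks the two natural long cycles (or a short cycle near one end together with the long cycle avoiding it) and checks that the leftover vertices, which form a matching or a path of even order in the standard drawing, can be covered; here I would lean on the explicit pictures in \cite{komu16} rather than re-deriving the adjacency structure. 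Finally, the Petersen graph: it famously contains two vertex-disjoint $5$-cycles (the outer pentagon and the inner pentagram) covering all ten vertices, so $G - V(C_1) - V(C_2)$ is empty and matchable, giving an odd conformal bicycle---hence the Petersen graph is not \BvN.

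The only genuine obstacle is bookkeeping: each of prisms, truncated biwheels, staircases and M\"obius ladders is an infinite family, so one must produce a conformal bicycle that works for \emph{all} members, parametrised by the family's size, rather than checking a single small example. I expect the staircase and truncated-biwheel cases to require the most care, since their ``natural'' long cycles may leave behind a path whose parity depends on the parameter; the fix is to choose the second cycle of the bicycle to absorb one extra vertex (e.g.\ a $4$-cycle instead of a triangle) so that the residual graph always has an even number of vertices forming an obvious near-perfect structure. Once the bicycles are written down, the classification into \BvN-violating (odd bicycle) versus \PMc-violating (even bicycle) is immediate from the parity of the cycles, and in every non-wheel family at least one of the two properties fails, which is all that is needed. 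Combined with the odd-wheel observation, this yields the claimed equivalence.
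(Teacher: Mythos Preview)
Your approach is the same as the paper's: run through the \NT\ families and, for each non-wheel family, exhibit a conformal bicycle. The paper's proof is in fact terser than yours --- it simply asserts that prisms, truncated biwheels, staircases and the Petersen graph each contain an \emph{odd} conformal bicycle (hence are not \BvN), while M\"obius ladders of order at least eight contain an \emph{even} conformal bicycle (hence are not \PMc), noting that the only smaller M\"obius ladder is $K_4$, which is already an odd wheel.

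Two small corrections to your sketch. First, a prism $C_n \times K_2$ is a brick only when $n$ is odd (for even $n$ the graph is bipartite), so the even-$n$ branch of your case split never arises; the two $n$-gons always give an odd bicycle. Second, the ``two boundary cycles'' picture does not transfer to M\"obius ladders: a M\"obius ladder has a single long boundary cycle, not two. The conformal bicycle there comes instead from two disjoint $4$-cycles built from consecutive rungs (e.g.\ in $M_8$ with rim $w_0\cdots w_7$ and chords $w_iw_{i+4}$, take $(w_0,w_1,w_5,w_4)$ and $(w_2,w_3,w_7,w_6)$), which is why this family yields an \emph{even} bicycle and fails \PMc\ rather than \BvN. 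With those adjustments your plan goes through and matches the paper's division of cases exactly; your caution about staircases and truncated biwheels is reasonable, though in fact a uniform odd bicycle can be written down for each.
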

\begin{proof}
Let $G$ be any \NT\ brick.
If $G$ is a prism, or a truncated biwheel, or a staircase, or the Petersen graph,
then $G$ has an odd conformal bicycle, whence $G$ is not \BvN. On the other hand, if $G$ is a M{\"o}bius ladder of order
eight or more, then $G$ has an even conformal bicycle. (The only other M{\"o}bius ladder is $K_4$ which is also an odd wheel.)
Consequently, if $G$ is \BvN\ and \PMc, then $G$ is indeed an odd wheel.
\end{proof}

It follows from Propositions~\ref{prop:BvN-PMc-retract}~and~\ref{prop:NT-brick-BvN-PMc} that,
if $G$ is a simple brick that is \BvN\ and \PMc, and if $G_1, G_2, \dots, G_r$ is a sequence of simple bricks as in
Theorem~\ref{thm:strictly-thin-generation}, then each $G_i$ is also \BvN\ and \PMc, and furthermore, $G_1$ is in fact an odd wheel.

\smallskip
In order to prove Theorem~\ref{thm:BvN-PMc-bricks}, we need to investigate how ``new'' bricks that are \BvN\ as well as \PMc\ may
be generated from the odd wheels, and also from the Murty graph (which is in turn generated from $W_5$ as we will see soon).
The next four sections are devoted to this task. Most of the proofs are straightforward.
We will make extensive use of Theorems~\ref{thm:PMc-iff-no-even-conformal-bicycle}~and~\ref{thm:BvN-iff-no-odd-conformal-bicycle}
without referring to them explicitly.

\subsection{Adding an edge}

In this section, we consider the simplest operation --- that of adding an edge.

\begin{prop}
\label{prop:wheel_plus_edge}
Let $G$ be a graph obtained from $W_{2k+1}$, where $k \geq 2$, by adding an edge~$e$ joining any two rim vertices.
Then either $G$ is not \BvN, or $G$ is not \PMc, possibly both.
\end{prop}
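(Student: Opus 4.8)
The plan is to show that the graph $G := W_{2k+1} + e$, where $e$ joins two rim vertices, always contains \emph{some} conformal bicycle --- either an odd one (violating the \BvN\ property via Theorem~\ref{thm:BvN-iff-no-odd-conformal-bicycle}) or an even one (violating the \PMc\ property via Theorem~\ref{thm:PMc-iff-no-even-conformal-bicycle}). First I would set up notation: let $h$ be the hub, let the rim be the odd cycle $R = (v_0, v_1, \dots, v_{2k}, v_0)$, and suppose without loss of generality that $e = v_0 v_j$ for some $j$ with $2 \le j \le 2k-1$. The chord $e$ splits the rim into two paths $P = v_0 v_1 \cdots v_j$ and $Q = v_0 v_{2k} v_{2k-1} \cdots v_j$, of lengths $j$ and $2k+1-j$ respectively; note these lengths have opposite parity since they sum to the odd number $2k+1$. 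Together with $e$, the path $P$ forms a cycle $C_P$ of length $j+1$ and the path $Q$ forms a cycle $C_Q$ of length $2k+2-j$; exactly one of $C_P, C_Q$ is odd and the other is even.

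The key idea is that each of $C_P$ and $C_Q$ is disjoint from at least one rim vertex strictly interior to the other path, namely from at least one vertex of the set $\{v_1, \dots, v_{2k}\}$ not on that cycle --- and the hub $h$ is adjacent to all rim vertices. So I would build a second cycle inside $W_{2k+1} - V(C_P)$ (or inside $W_{2k+1} - V(C_Q)$) using the hub together with a short stretch of the rim lying outside $C_P$. Concretely, if $Q$ has interior vertices $v_{j+1}, \dots, v_{2k}$ (which happens precisely when $j \le 2k-1$, guaranteed), then the cycle $C' := (h, v_{j+1}, v_{j+2}, \dots, v_{2k}, h)$ is vertex-disjoint from $C_P$; and symmetrically $(h, v_1, \dots, v_{j-1}, h)$ is disjoint from $C_Q$ when $j \ge 2$. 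One of these two auxiliary cycles can be chosen to have the \emph{opposite} parity to the rim-chord cycle it is paired with is \emph{not} what we want --- rather, I want to pair cycles to control the parity of the \emph{bicycle}. Here I have freedom: for a given target parity I pick which of $\{C_P, C_Q\}$ to use (one is odd, one is even) and then the hub-cycle in the complementary arc can be made to have matching parity by choosing its length (an arc of the rim together with the hub has length one more than the arc), and the remaining vertices --- which lie on a single rim path between the two cycles --- can be perfectly matched along that path provided that leftover path has even order. The parity bookkeeping is exactly the point where the two paths $P$ and $Q$ having opposite parity gets used, and one checks that in at least one of the two configurations the "leftover" segment is even and hence conformal.

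The main obstacle, and where I would spend the most care, is the parity/conformality bookkeeping: I must verify that for \emph{at least one} of the two natural pairings (giving an odd bicycle) \emph{or} for at least one pairing giving an even bicycle, the uncovered vertices really do admit a perfect matching. Since $|V(G)| = 2k+2$ is even and the two cycles of the bicycle will have lengths of equal parity (forced, as the paper notes), the uncovered set has even size; the real content is that it induces (or spans, using rim edges) a graph with a perfect matching, which here is immediate because the uncovered vertices form a contiguous arc of the rim together with possibly the chord. I would organize this as a short case split on the parity of $j$ (equivalently, on whether $C_P$ is the odd or the even cycle), and in each case exhibit explicitly one bicycle; whichever parity it turns out to be, it contradicts one of the two hypotheses, so $G$ is not \BvN, or not \PMc, or both. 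A degenerate sub-case to handle separately is when an auxiliary hub-cycle would be a multigraph digon or triangle (e.g. $j = 2$ or $j = 2k-1$, where one arc has a single interior vertex and the hub-cycle is a triangle $h v_1 v_2$ via a rim edge and a spoke) --- these still give legitimate cycles of length three, so the argument goes through, but I would flag it to avoid an off-by-one error. I expect no serious difficulty beyond this careful parity accounting; the use of $k \ge 2$ is exactly to guarantee the rim is long enough that both split paths have at least one interior vertex, so that both auxiliary hub-cycles exist.
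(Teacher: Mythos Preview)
Your proposal has the right overall shape and, for the main case, is essentially the paper's argument --- but you have one genuine omission and one unnecessary complication.

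\textbf{The omission.} By restricting to $2 \le j \le 2k-1$ you silently assume that the two rim vertices are nonadjacent. But the paper allows multiple edges, and the proposition is later applied (in the proof of Theorem~\ref{thm:BvN-PMc-bricks}) precisely in a situation where $e$ may duplicate an existing rim edge. The paper handles this case separately: the digon on $u,v$ together with the even cycle through $h$ and all remaining $2k$ vertices form an even conformal bicycle, so $G$ is not \PMc. You need to add this case.

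\textbf{The overcomplication.} For nonadjacent rim vertices your plan is correct, but the ``leftover matching'' worry and the talk of having ``freedom to choose parities'' are red herrings. Once you pick the chord-cycle on the \emph{even} rim arc (equivalently, the paper takes $P_1$ to be the \emph{odd} arc and forms the hub-cycle there), the hub-cycle on the complementary arc uses \emph{all} remaining vertices: the two cycles partition $V(G)$, so conformality is automatic and there is nothing to match. Both cycles are then odd, giving an odd conformal bicycle and showing $G$ is not \BvN. Your degenerate-case discussion is also slightly garbled (for $j=2$ the vertices $v_1,v_2$ lie in $C_P$, so the triangle ``$h\,v_1\,v_2$'' cannot be the disjoint hub-cycle); but once you notice that the two cycles cover everything, no separate boundary analysis is needed. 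The condition $k\ge 2$ is used only to ensure the odd arc has length at least three, so that its interior supports a hub-cycle.
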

\begin{proof}
We let $u$ and $v$ denote the ends of $e$, and let $h$ denote the hub of $W_{2k+1}$.
If $u$~and~$v$ are adjacent in $W_{2k+1}$, then there is a $2$-cycle $C_1$ containing $u$ and $v$,
and there is a cycle containing all of the remaining $2k$ vertices, whence $(C_1,C_2)$ is an even conformal bicycle and $G$ is not \PMc.
Now suppose that $u$ and $v$ are nonadjacent in $W_{2k+1}$. The rim of $W_{2k+1}$ is a union of two internally-disjoint $uv$-paths,
say $P_1$ and $P_2$. One of them, say $P_1$, is of odd length, and has length at least three.
We let $Q_1 := P_1 - u - v$, and let $u'$ and $v'$ denote the ends of $Q_1$. Let $C_1 := Q_1+v'h+hu'$ and $C_2:=P_2+e$.
Observe that
$(C_1,C_2)$ is an odd conformal bicycle, whence $G$ is not \BvN.
\end{proof}

\begin{prop}
\label{prop:Murty_graph_plus_edge}
Let $H$ denote the Murty graph, and let $a_1$ and $a_2$ denote its noncubic vertices.
Let $G$ be a graph obtained from $H$ by adding an edge $e$ such that at least one of $a_1$ and $a_2$ is not an end of $e$.
Then either $G$ is not \BvN, or $G$ is not \PMc, possibly both.
\end{prop}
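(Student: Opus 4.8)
Let $H$ be the Murty graph with noncubic vertices $a_1, a_2$, and let $G := H + e$ where $e$ has at least one end, say $u$, among the cubic vertices $\{b_1, b_2, b_3, t_1, t_2, t_3\}$. I want to exhibit either an even conformal bicycle or an odd conformal bicycle in $G$ (and, since $G$ contains $H$ as a spanning subgraph, a conformal bicycle of $G$ using the new edge $e$ together with old edges suffices). The plan is a finite case analysis on the pair of ends of $e$, exploiting the fact that the automorphism group of $H$ has exactly three orbits on vertices, namely $\{a_1, a_2\}$, $\{b_1, b_2, b_3\}$, and $\{t_1, t_2, t_3\}$; this cuts the number of genuinely distinct cases down to a handful. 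Writing $e = uv$, the possibilities (up to automorphism) for the unordered orbit-pair $\{\text{orbit}(u), \text{orbit}(v)\}$ are: $\{b, b\}$, $\{b, t\}$, $\{t, t\}$, $\{a, b\}$, and $\{a, t\}$ — five cases, since the hypothesis forbids $\{a, a\}$.

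**Carrying out the cases.** For each case I would pick a concrete representative edge $e$ and then explicitly write down two vertex-disjoint cycles $C_1, C_2$ in $G$ together with a perfect matching of the remaining four vertices, thereby certifying a conformal bicycle; by the parity remark after the definition of conformal bicycle, $C_1$ and $C_2$ automatically have the same parity, so I only need to check that parity to decide whether the bicycle is even (contradicting \PMc\ via Theorem~\ref{thm:PMc-iff-no-even-conformal-bicycle}) or odd (contradicting \BvN\ via Theorem~\ref{thm:BvN-iff-no-odd-conformal-bicycle}). For instance, when $e = b_1 b_2$, the $2$-cycle on $\{b_1, b_2\}$ together with a cycle through the remaining six vertices (one exists since $H - b_1 - b_2$ is matchable and has a Hamilton cycle on those six vertices) gives an even conformal bicycle. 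When $e = t_1 t_3$ I would similarly produce a short cycle through $t_1, t_3$ and a long cycle through the rest. The cases $\{a, b\}$ and $\{a, t\}$, say $e = a_1 b_3$ or $e = a_1 t_1$, are handled by the same recipe: find a short cycle using $e$ and a disjoint cycle covering enough of the rest, leaving a matchable pair. Since $H$ itself has no conformal bicycle but is highly symmetric and dense, in each case the new edge $e$ creates exactly the short cycle one needs; verifying disjointness and matchability of the leftover is a routine finite check on an $8$-vertex graph.

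**The main obstacle.** The only real work is bookkeeping: ensuring that for each of the five representative edges the exhibited pair $(C_1, C_2)$ is genuinely vertex-disjoint and that $G - V(C_1) - V(C_2)$ (four vertices) actually has a perfect matching, i.e. that the leftover two edges are present in $H$. This is where one must be careful, because $H$ is not complete — e.g. $b_1, b_2, b_3$ are pairwise nonadjacent and $a_1, a_2$ are adjacent to all of $b_1, b_2, b_3$ but not to any $t_i$. I expect the delicate subcase to be when $e$ joins two vertices in the same orbit $\{t_1, t_2, t_3\}$, since the $t_i$'s induce a triangle already and one must route the long cycle through both $a$'s and all three $b$'s; I would handle this by taking $C_1$ to be the triangle $(t_1, t_2, t_3)$ only after adjusting — actually $e$ is a multiple edge there, so instead $e = t_i t_j$ creates a $2$-cycle $C_1$ on $\{t_i, t_j\}$, and $C_2$ must be a cycle on $\{a_1, a_2, b_1, b_2, b_3\} \cup \{t_k\}$ where $t_k$ is the third $t$-vertex; checking such a $6$-cycle exists in $H$ (it does, using $t_k$'s neighbour among the $b$'s) is the crux. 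Once all five representatives are dispatched, invoking the automorphisms of $H$ to conclude the general statement completes the proof.
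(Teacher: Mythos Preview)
Your overall strategy---a finite case analysis exploiting the automorphism group of~$H$---is exactly what the paper does (the paper organises the cases as three adjacent pairs and three nonadjacent pairs, and likewise omits the explicit bicycles). However, your execution has two concrete errors that leave genuine gaps.

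First, your orbit-pair count is wrong. The automorphism group of~$H$ is $S_2 \times S_3$, where the $S_3$ acts \emph{simultaneously} on the indices of the $b$'s and the $t$'s (because $b_i t_i \in E(H)$ but $b_i t_j \notin E(H)$ for $i \neq j$). Consequently the ``$\{b,t\}$'' case splits into two distinct orbits: $\{b_i,t_i\}$ (adjacent in~$H$, so $e$ is a multiple edge) and $\{b_i,t_j\}$ with $i \neq j$ (nonadjacent). There are six cases, not five, and the missing case must be handled separately.

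Second, your worked example for the $\{b,b\}$ case is incorrect: $b_1$ and $b_2$ are \emph{not} adjacent in~$H$ (they lie in the same colour class of the underlying $K_{3,3}$), so adding $e = b_1 b_2$ does not create a $2$-cycle. You will need an actual odd or even conformal bicycle here---for instance, one can take $C_1 = (b_1, b_2, a_1, b_1)$ using the new edge together with the triangle $C_2 = (t_1,t_2,t_3,t_1)$, leaving $\{a_2, b_3\}$ which is matched by $a_2 b_3$; this is an odd conformal bicycle, so $G$ is not \BvN. The remaining cases require similar care: your blanket assertion that ``four vertices'' are always left over is also not accurate, since the leftover size depends on the cycle lengths chosen.
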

\begin{proof}
We label the graph $H$ as in Figure~\ref{fig:K4_splice_K33_and_Murty_graph}(b), and we let $u$ and $v$ denote the ends of~$e$.
First suppose that $u$ and $v$ are adjacent in $H$. Up to symmetry, there are three cases: $\{u,v\} = \{t_1,t_3\}$,
$\{u,v\} = \{t_1,b_1\}$ and $\{u,v\} = \{a_1,b_1\}$. In each case, $G$ has an even conformal bicycle.
Now suppose that $u$ and $v$ are nonadjacent in $H$. Up to symmetry, there are three cases:
$\{u,v\} = \{b_1,b_2\}$, $\{u,v\} = \{t_1,b_2\}$ and $\{u,v\} = \{t_1,a_1\}$.
In each case, $G$ has either an even conformal bicycle, or an odd conformal bicycle.
(We omit the details.)
\end{proof}

\bigskip
In the next three sections, we consider the remaining three expansion operations. However, for convenience, we shall instead
think of deleting a strictly thin edge $e$ from a brick $G$ and taking the retract in order to obtain a smaller brick $H$
that is either an odd wheel or is the Murty graph.
We will invoke Proposition~\ref{prop:brick-minus-thin-edge-cases} without referring to it explicitly.
We shall adopt the following notation and conventions.

\begin{Not}
\label{Not:strictly_thin_edge}
For a strictly thin edge $e$ of a simple brick $G$,
we let $e:=u_0v_0$, and we let $H$ denote the retract of $G-e$.
If $u_0$ is cubic in $G$, we let $u_1$ and $u_2$ denote the neighbours of $u_0$ that are distinct from $v_0$.
Likewise, if $v_0$ is cubic in $G$, we let $v_1$ and $v_2$ denote the neighbours of $v_0$ that are distinct from $u_0$.
If the index of $e$ is one then we adjust notation so that $u_0$ is cubic and $v_0$ is noncubic.
If the index of $e$ is three then $u_0$ and $v_0$ have a common neighbour, and we adjust notation so that $u_1=v_1$.
\end{Not}

\subsection{Index one}

\begin{prop}
\label{prop:Murty_graph_index_1}
Let $G$ be a simple brick, and let $e$ be a strictly thin edge of index one such that the retract of $G-e$ is the Murty graph.
Then either $G$ is not \BvN, or $G$ is not \PMc, possibly both.
\end{prop}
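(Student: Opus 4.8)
The plan is to analyze the structure of $G$ directly from the hypotheses using Notation~\ref{Not:strictly_thin_edge} and Proposition~\ref{prop:brick-minus-thin-edge-cases}. Since $e=u_0v_0$ has index one, $u_0$ is cubic in $G$ with neighbours $v_0, u_1, u_2$, while $v_0$ is noncubic; and $H$, the retract of $G-e$, is obtained by bicontracting the unique degree-two vertex $u_0$ of $G-e$, so $H$ arises from $G-e$ by identifying $u_1$ and $u_2$ into a single contraction vertex of degree at least four. Equivalently, $G$ is obtained from $H$ by ``splitting'' a noncubic vertex $z$ of $H$ into two vertices $u_1, u_2$, inserting the new vertex $u_0$ adjacent to both $u_1$ and $u_2$, and then joining $u_0$ to some existing vertex $v_0$ of $H$ (with $v_0 \notin \{u_1,u_2\}$ since $G$ is simple). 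Here $z$ ranges over the noncubic vertices of the Murty graph, which by hypothesis are exactly $a_1$ and $a_2$; by the symmetry exchanging $a_1$ and $a_2$ (an automorphism of the Murty graph), we may assume $z = a_1$.

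Next I would enumerate the possibilities. The vertex $a_1$ has degree four in the Murty graph, with neighbours $a_2, b_1, b_2, b_3$ (using the labelling of Figure~\ref{fig:K4_splice_K33_and_Murty_graph}(b)); splitting it means partitioning these four edges between $u_1$ and $u_2$, and since the contraction vertex of $H$ must have degree at least four and each of $u_1,u_2$ must have degree at least three in $G$, the partition is $2{+}2$ (with $u_0$ contributing one more to each). Up to the automorphisms of the Murty graph fixing $a_1$ (which permute $b_1,b_2,b_3$ and fix $a_2$), there are essentially two types of $2{+}2$ split: one in which $a_2$ is grouped with some $b_i$, and one in which $\{u_1,u_2\}$ have neighbour-sets $\{a_2\}\cup\{b_i\}$ versus $\{b_j,b_k\}$ — these coincide — so really the split is determined by which $b_i$ accompanies $a_2$, giving one case up to symmetry. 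Then $v_0$ must be chosen among the remaining vertices of $H$ not equal to $u_1$ or $u_2$; up to symmetry $v_0 \in \{a_2, b_i, t_i\}$ for appropriate indices, a short finite list of candidate graphs $G$. For each such $G$ I would exhibit either an even conformal bicycle (contradicting \PMc\ via Theorem~\ref{thm:PMc-iff-no-even-conformal-bicycle}) or an odd conformal bicycle (contradicting \BvN\ via Theorem~\ref{thm:BvN-iff-no-odd-conformal-bicycle}). Since each candidate $G$ has only eleven vertices, finding the required pair of vertex-disjoint cycles together with a perfect matching on the remaining vertices is a routine finite check, and I would present the witnessing bicycles explicitly (perhaps in a figure with bold edges, as in the proof of Theorem~\ref{thm:main}), omitting the verification details.

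The main obstacle is organizational rather than conceptual: one must be careful that the case enumeration for the split of $a_1$ and the choice of $v_0$ is genuinely exhaustive up to isomorphism, and that the ``retract of $G-e$ is the Murty graph'' hypothesis is used correctly — in particular, that after splitting we must check no \emph{further} bicontraction is possible, i.e.\ that $u_1$ and $u_2$ really have degree three (not two) in $G$, which pins down the $2{+}2$ split and prevents degenerate cases. A secondary subtlety is that the Murty graph ``up to multiple edges joining its two noncubic vertices'' is allowed as the retract, but since $G$ is a \emph{simple} brick and $e$ is strictly thin, $H$ is simple as well, so this multiplicity issue does not actually arise here. Once the finite list of candidate graphs $G$ is correctly assembled, producing the forbidden conformal bicycles is mechanical.
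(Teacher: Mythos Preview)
Your proposal is correct and follows essentially the same approach as the paper: identify the contraction vertex as $a_1$ (up to symmetry), use the $S_3$-symmetry on $\{b_1,b_2,b_3\}$ to reduce the $2{+}2$ split to a single case, then enumerate the finitely many choices for $v_0$ and exhibit a conformal bicycle in each. One trivial slip: $G$ has ten vertices, not eleven.
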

\begin{proof}
We adopt Notation~\ref{Not:strictly_thin_edge}.
We label the vertices of $H$
as shown in Figure~\ref{fig:K4_splice_K33_and_Murty_graph}(b). The contraction
vertex of $H$ resulting from the bicontraction of $u_0$ has degree at least four, whence one of $a_1$ and $a_2$ is the contraction vertex.
Adjust notation so that $a_1$ is the contraction vertex.
The graph $H-a_1$ is shown in Figure~\ref{fig:Murty_graph_index_1}.
In~$G$, precisely two vertices from the set $\{a_2,b_1,b_2,b_3\}$ are neighbours of~$u_1$,
and the remaining two are neighbours of~$u_2$.
By observing the symmetries of $H-a_1$, we may adjust notation so that $b_1$~and~$b_2$ are neighbours of $u_1$, whereas
$b_3$ and $a_2$ are neighbours of $u_2$, as shown in Figure~\ref{fig:Murty_graph_index_1}.

\begin{figure}[!htb]
\centering
\begin{tikzpicture}[scale=1]

\draw (0,1.5) -- (-1.5,0);
\draw (0,1.5) -- (0,0);
\draw (0,1.5) -- (1.5,0);

\draw (0,0) -- (0,-1);
\draw (-1.5,0) -- (-1,-2);
\draw (1.5,0) -- (1,-2);

\draw (0,-1) -- (-1,-2) -- (1,-2) -- (0,-1);

\draw (0,1.5)node{}node[above,nodelabel]{$a_2$};

\draw (0,0)node{}node[left,nodelabel]{$b_2$};
\draw (-1.5,0)node{}node[left,nodelabel]{$b_1$};
\draw (1.5,0)node{}node[right,nodelabel]{$b_3$};

\draw (0,-1)node{}node[below,nodelabel]{$t_2$};
\draw (-1,-2)node{}node[left,nodelabel]{$t_1$};
\draw (1,-2)node{}node[right,nodelabel]{$t_3$};

\draw (0,-2.5)node[nodelabel]{$H-a_1$};
\end{tikzpicture}
\hspace*{0.5in}
\begin{tikzpicture}[scale=1]
\draw (-2.3,1.5) -- (-1.7,1.9) -- (-1.1,2.3);
\draw (-2.3,1.5) -- (-1.5,0);
\draw (-2.3,1.5) -- (0,0);
\draw (-1.1,2.3) -- (1.5,0);
\draw (-1.1,2.3) -- (0.75,1.5);


\draw (0.75,1.5) -- (-1.5,0);
\draw (0.75,1.5) -- (0,0);
\draw (0.75,1.5) -- (1.5,0);

\draw (0,0) -- (0,-1);
\draw (-1.5,0) -- (-1,-2);
\draw (1.5,0) -- (1,-2);

\draw (0,-1) -- (-1,-2) -- (1,-2) -- (0,-1);

\draw (0.75,1.5)node{}node[above,nodelabel]{$a_2$};

\draw (0,0)node{}node[left,nodelabel]{$b_2$};
\draw (-1.5,0)node{}node[left,nodelabel]{$b_1$};
\draw (1.5,0)node{}node[right,nodelabel]{$b_3$};

\draw (0,-1)node{}node[below,nodelabel]{$t_2$};
\draw (-1,-2)node{}node[left,nodelabel]{$t_1$};
\draw (1,-2)node{}node[right,nodelabel]{$t_3$};

\draw (-2.3,1.5)node{}node[above left, nodelabel]{$u_1$};
\draw (-1.7,1.9)node{}node[above left, nodelabel]{$u_0$};
\draw (-1.1,2.3)node{}node[above, nodelabel]{$u_2$};

\draw (0,-2.5)node[nodelabel]{$G-e$};
\end{tikzpicture}
\vspace*{-0.3in}
\caption{Illustration for the proof of Proposition~\ref{prop:Murty_graph_index_1}}
\label{fig:Murty_graph_index_1}
\end{figure}

In $G-e$, there is an automorphism that swaps $t_1$ and $t_2$, as well as $b_1$ and $b_2$, and keeps remaining vertices fixed.
Thus, up to symmetry, there are five possibilities for the end $v_0$ of the edge $e$. These are: $v_0 \in \{t_1, b_1, t_3, b_3, a_2\}$.
In each case, we present a conformal bicycle $(C_1,C_2)$.
If $v_0 = t_1$ then let $C_1:=(t_1,u_0,u_1,b_1,t_1)$ and $C_2:=(b_3,u_2,a_2,b_2,t_2,t_3,b_3)$.
If $v_0 = b_1$ then let $C_1:=(b_1,u_0,u_1,b_1)$ and $C_2:=(u_2,a_2,b_3,u_2)$.
If $v_0 = t_3$ then let $C_1:=(t_3,u_0,u_1,b_1,t_1,t_3)$ and $C_2:=(u_2,a_2,b_3,u_2)$.
If $v_0 = b_3$ then let $C_1:=(b_3,u_0,u_2,b_3)$ and $C_2:=(t_1,t_2,t_3,t_1)$.
If $v_0 = a_2$ then let $C_1:=(a_2,u_0,u_2,a_2)$ and $C_2:=(t_1,b_1,u_1,b_2,t_2,t_1)$.
In all cases, we conclude that either $G$ is not \BvN, or $G$ is not \PMc, possibly both.
This completes the proof of Proposition~\ref{prop:Murty_graph_index_1}.
\end{proof}

\begin{prop}
\label{prop:W5_index_1}
Let $G$ be a simple brick, and let $e$ be a strictly thin edge of index one such that the retract of $G-e$ is the odd wheel $W_5$.
Then exactly one of the following holds:
\begin{enumerate}[(i)]
\item $G$ is the Murty graph.
\item Either $G$ is not \BvN, or $G$ is not \PMc, possibly both.
\end{enumerate}
\end{prop}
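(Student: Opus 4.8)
The plan is to proceed exactly as in the index-one analysis for the Murty graph (Proposition~\ref{prop:Murty_graph_index_1}), but now with $H=W_5$: adopt Notation~\ref{Not:strictly_thin_edge}, use that the contraction vertex of $H$ arising from bicontracting the cubic end $u_0$ has degree at least four, and locate that vertex inside $W_5$. Since $W_5$ has exactly one vertex of degree four --- its hub $h$ --- the contraction vertex must be $h$. So $H-h$ is the rim $C_5$, say $(r_1,r_2,r_3,r_4,r_5)$, and in $G-e$ the two neighbours $u_1,u_2$ of $u_0$ split the five rim vertices: one of them, say $u_1$, is adjacent to a pair of rim vertices and $u_2$ to the other three, or vice versa (the degree-four constraint on $h$ forces the $2$--$3$ split). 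Using the dihedral symmetry of the rim I would reduce the placement of $\{u_1,u_2\}$ to a couple of cases, and then enumerate the possible locations of the noncubic end $v_0$ on the rim, again up to the residual symmetry.

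Next I would, in each surviving case, either exhibit an explicit conformal bicycle $(C_1,C_2)$ in $G$ --- proving $G$ is not \BvN\ or not \PMc\ --- or else recognize that $G$ is (isomorphic to) the Murty graph. The point is that exactly one configuration of $\{u_1,u_2\}$ and $v_0$ fails to produce a conformal bicycle, and one should check directly that in that configuration $G$ is the Murty graph; conversely, the Murty graph really does arise this way, namely by deleting from it the strictly thin edge of index one incident with $a_1$ (or $a_2$) and bicontracting, which returns $W_5$ --- this is the assertion ``the Murty graph is in turn generated from $W_5$'' already foreshadowed in the text. The ``exactly one'' in the statement then follows because the Murty graph itself contains no conformal bicycle (stated earlier in the excerpt), so alternatives (i) and (ii) are mutually exclusive.

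Concretely, for the cases where $G$ is not the Murty graph, the bicycles are short and local: when $v_0$ is chosen so that $e=u_0v_0$ together with rim edges closes up a small even or odd cycle, one takes $C_1$ to be that small cycle through $u_0$ and $C_2$ to be a cycle on the remaining vertices (through $h$, a rim arc, and possibly $u_1$ or $u_2$), and one checks the leftover vertices can be perfectly matched --- exactly the style of the bicycles written out in Proposition~\ref{prop:Murty_graph_index_1}. I expect the main obstacle to be bookkeeping: correctly pinning down which $2$--$3$ partition of the rim is forced, tracking the residual symmetry so the case list is genuinely exhaustive but not redundant, and verifying in the single exceptional case that the resulting cubic-plus-two-noncubic graph is isomorphic to the Murty graph rather than merely ``looks like it.'' None of the individual verifications is hard; the care is in the enumeration and in the isomorphism check. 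I would therefore organize the proof as: (1) identify $h$ as the contraction vertex and $H-h$ as $C_5$; (2) fix the $2$--$3$ split and normalize by symmetry; (3) run through the positions of $v_0$, giving a conformal bicycle in all but one; (4) in the remaining position, display an explicit isomorphism $G\cong$ the Murty graph; (5) conclude exclusivity from the fact that the Murty graph has no conformal bicycle.
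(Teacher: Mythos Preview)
Your proposal is correct and follows essentially the same approach as the paper: identify the hub as the contraction vertex, split the five rim vertices $2$--$3$ between $u_1$ and $u_2$, reduce by symmetry to the two cases (the pair adjacent to $u_1$ consecutive on the rim, or not), and then enumerate the positions of $v_0$ up to residual symmetry, exhibiting an explicit conformal bicycle in every case except the single consecutive-case configuration that yields the Murty graph. The paper carries this out with exactly the case structure you describe, so your plan matches it in both strategy and level of detail.
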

\begin{proof}
We adopt Notation~\ref{Not:strictly_thin_edge}.
The contraction vertex of $H$ resulting
from the bicontraction of $u_0$ has degree at least four, whence the hub of $H$ is indeed the contraction vertex.
We label the rim of $H$, in cyclic order, as follows: $(w_0,w_1,w_2,w_3,w_4,w_0)$. Since the hub is incident with each vertex
on the rim, we infer that one of $u_1$ and $u_2$ is incident with exactly two rim vertices, and the other is incident
with the remaining three rim vertices. Adjust notation so that $u_1$ is incident with two rim vertices; there are two cases
depending on whether these neighbours of $u_1$ appear consecutively on the rim (see Figure~\ref{fig:W5_index_1}a),
or not (see Figure~\ref{fig:W5_index_1}b).

\begin{figure}[!htb]
\centering
\begin{tikzpicture}[scale=1]

\draw (90:-0.75) -- (234:2);
\draw (90:-0.75) -- (306:2);

\draw (90:0.75) -- (18:2);
\draw (90:0.75) -- (90:2);
\draw (90:0.75) -- (162:2);

\draw (90:2) -- (162:2)node{}node[left,nodelabel]{$w_1$} -- (234:2)node{}node[left,nodelabel]{$w_2$} -- (306:2)node{}node[right,nodelabel]{$w_3$} -- (18:2)node{}node[right,nodelabel]{$w_4$} -- (90:2)node{}node[above,nodelabel]{$w_0$};

\draw (90:0.75)node{}node[above right,nodelabel]{$u_2$} -- (90:0)node{}node[right,nodelabel]{$u_0$} --
(90:-0.75)node{}node[below,nodelabel]{$u_1$};

\draw (90:-2.3)node[nodelabel]{(a)};

\end{tikzpicture}
\hspace*{0.7in}
\begin{tikzpicture}[scale=1]

\draw (90:0.75) -- (162:2);
\draw (90:0.75) -- (18:2);

\draw (90:-0.75) -- (234:2);
\draw (90:-0.75) -- (306:2);
\draw (90:-0.75) to [out=135,in=225] (90:2);

\draw (90:2) -- (162:2)node{}node[left,nodelabel]{$w_1$} -- (234:2)node{}node[left,nodelabel]{$w_2$} -- (306:2)node{}node[right,nodelabel]{$w_3$} -- (18:2)node{}node[right,nodelabel]{$w_4$} -- (90:2)node{}node[above,nodelabel]{$w_0$};

\draw (90:0.75)node{}node[above,nodelabel]{$u_1$} -- (90:0)node{}node[right,nodelabel]{$u_0$} --
(90:-0.75)node{}node[below,nodelabel]{$u_2$};

\draw (90:-2.3)node[nodelabel]{(b)};

\end{tikzpicture}
\vspace*{-0.2in}
\caption{Illustration for the proof of Proposition~\ref{prop:W5_index_1}}
\label{fig:W5_index_1}
\end{figure}

\smallskip
First suppose that the two neighbours of $u_1$ on the rim appear consecutively, and adjust notation so that $u_1w_2,u_1w_3 \in E(G)$.
Thus $u_2w_4,u_2w_0,u_2w_1 \in E(G)$. Now, up to symmetry, there are three possibilities for the end $v_0$ of the edge $e$.
These are $v_0 \in \{w_0,w_1,w_2\}$.
If $v_0=w_0$ then it is easily verified that $G$ is the Murty graph.
In the remaining two cases, we present an odd conformal bicycle $(C_1,C_2)$.
If $v_0=w_1$ then let $C_1:=(w_1,u_0,u_2,w_1)$ and let $C_2:=(w_2,u_1,w_3,w_2)$.
If $v_0=w_2$ then let $C_1:=(w_2,u_0,u_1,w_2)$ and let $C_2:=(w_0,u_2,w_1,w_0)$.

\smallskip
Now suppose that the two neighbours of $u_1$ on the rim do not appear consecutively, and adjust notation so that $u_1w_1,u_1w_4 \in E(G)$.
Thus $u_2w_0,u_2w_2,u_2w_3 \in E(G)$. Now, up to symmetry, there are three possibilities for the end $v_0$ of the edge $e$.
These are \mbox{$v_0 \in \{w_0,w_1,w_2 \}$}.
In each case, we present a conformal bicycle $(C_1,C_2)$.
If $v_0=w_0$ then let $C_1:=(w_0,u_0,u_2,w_0)$ and let $C_2:=(u_1,w_1,w_2,w_3,w_4,u_1)$.
If $v_0=w_1$ then let $C_1:=(w_1,u_0,u_1,w_1)$ and let $C_2:=(w_2,u_2,w_3,w_2)$.
If $v_0=w_2$ then let $C_1:=(w_2,u_0,u_2,w_3,w_2)$ and let $C_2:=(u_1,w_4,w_0,w_1,u_1)$.

\smallskip
This completes the proof of Proposition~\ref{prop:W5_index_1}.
\end{proof}

\begin{lem}
\label{lem:wheels_index_1}
Let $G$ be a simple brick, and let $e$ be a strictly thin edge of index one such that the retract of $G-e$ is an odd wheel $W_{2k+1}$
for some $k \geq 3$. Then $G$ has distinct edges $f_1$ and $f_2$, each of which is distinct from $e$, such that the following hold:
\begin{enumerate}[(i)]
\item $f_1$ is strictly thin of index one in $G$; let $H_1$ denote the retract of $G-f_1$;
\item $f_2$ is strictly thin of index zero in $H_1$; let $H_2:=H_1-f_2$;
\item $e$ is strictly thin of index one in $H_2$, and the retract of $H_2-e$ is the odd wheel $W_{2k-1}$.
\end{enumerate}
\end{lem}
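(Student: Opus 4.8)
The plan is to do a little surgery that shrinks the wheel by two rim vertices while keeping $e$ intact, and to read off $f_1,f_2$ from it. First I would pin down the structure of $G$. Since $e=u_0v_0$ has index one, $u_0$ is cubic and $v_0$ is noncubic; retracting $G-e$ bicontracts $u_0$, fusing $u_0$ with its two neighbours $u_1,u_2$ into the hub $h$ of $W_{2k+1}$. Hence $u_0,u_1,u_2$ are non-rim vertices, each rim vertex of $W_{2k+1}$ gets its spoke from exactly one of $u_1,u_2$, and $v_0$ --- distinct from $h$, of degree $\ge 4$ in $G$ but of degree $\deg_G(v_0)-1$ in the wheel --- is a rim vertex with $\deg_G(v_0)=4$. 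So $G$ is $W_{2k+1}$ with the hub ``expanded'' to the path $u_1-u_0-u_2$, where $u_1$ is joined to a nonempty set $S_1$ of rim vertices and $u_2$ to the complementary set $S_2$, together with the chord $e=u_0v_0$; the larger part, say $S_2$, has $|S_2|\ge k+1\ge 4$, so $u_2$ is noncubic.

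Next I would choose a rim vertex $w\neq v_0$ with $w\in S_2$ whose two rim-neighbours $w^-,w^+$ lie in different parts and are both distinct from $v_0$, and set $f_1:=wu_2$. Deleting $f_1$ makes $w$ have degree two; retracting fuses $w^-,w,w^+$ into a single vertex $z$ whose neighbours are its two new rim-neighbours together with $u_1$ (a spoke inherited from $w^-$) and $u_2$ (one inherited from $w^+$). Thus $H_1:=\mathrm{retract}(G-f_1)$ is the analogous expanded-hub-plus-chord graph over the $(2k-1)$-cycle, except that the single vertex $z$ carries spokes to both $u_1$ and $u_2$; since $\deg_{H_1}(z)=4$ and $u_2$ is still noncubic, $f_2:=zu_2$ has index zero in $H_1$, and $H_2:=H_1-f_2$ is exactly the expanded-hub-plus-chord graph over $W_{2k-1}$. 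As neither $f_1$ nor $f_2$ touches $v_0$ or $u_0$, in $H_2$ we still have $\deg(v_0)=4$, $\deg(u_0)=3$ and $u_0\sim u_1,u_2,v_0$; so $e=u_0v_0$ has index one in $H_2$, and deleting $e$ and bicontracting $u_0$ produces a hub adjacent to each of the $2k-1$ rim vertices exactly once, i.e.\ $\mathrm{retract}(H_2-e)=W_{2k-1}$. The edges $e,f_1,f_2$ are pairwise distinct by construction.

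The index bookkeeping and the computation of $\mathrm{retract}(H_2-e)$ are then immediate from this description. Two points carry the real weight. The first --- and, I expect, the main obstacle --- is the existence of a peeling vertex $w$ as above. Two-colour the rim by $S_1/S_2$; a rim vertex qualifies precisely when its two rim-neighbours have different colours, and since the rim has odd length the ``step-two'' permutation $i\mapsto i+2$ of the rim is a single cycle, which forces at least two qualifying vertices. Because $\{v_0,v_0^-,v_0^+\}$ has only three elements, a spacing estimate using $k\ge 3$ shows a qualifying $w$ outside this set (and inside $S_2$, so that $f_1=wu_2$ has index one) can always be picked --- with the single exception of the configuration in which $S_1$ and $S_2$ almost alternate around the rim; but in that configuration $G$ contains an even conformal bicycle (a $4$-cycle through $u_1$ and a short rim arc, together with a cycle through $u_2$ on the rest of the rim, leaving $\{u_0,v_0\}$ to be matched by $e$), so this case does not occur when $G$ is \PMc. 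The second point is that $f_1,f_2$ are \emph{thin}, i.e.\ that $H_1,H_2$ are simple bricks: simplicity holds because $w^-,w^+$ lie in different parts, so forming $z$ creates no parallel spoke, and brick-ness follows either from the Edmonds--Lov{\'a}sz--Pulleyblank criterion applied to the explicit structure, or by recognising $H_1$ and $H_2$ as outputs of the expansion operations on $W_{2k-1}$.
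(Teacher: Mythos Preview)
Your overall strategy --- peel off a rim vertex $w$ together with its two rim-neighbours so that the wheel shrinks by two --- is exactly the paper's approach. The paper also chooses $w$ in the larger colour class with rim-neighbours of different colours (it phrases this as taking an end of a maximal monochromatic run of length at least two), sets $f_1$ to be the spoke at $w$, and then $f_2$ to be the surviving spoke at the contraction vertex going to the same $u_i$. The verification that $H_1,H_2$ are simple bricks via the expansion theorem is also as in the paper.

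There is, however, a genuine gap in your existence argument. You require not only $w\neq v_0$ but also $w^-,w^+\neq v_0$, and then concede that this fails in the ``almost alternating'' configuration, where you fall back on exhibiting an even conformal bicycle and conclude that ``this case does not occur when $G$ is \PMc''. But Lemma~\ref{lem:wheels_index_1} carries no \PMc\ hypothesis --- it is stated for an arbitrary simple brick $G$. Concretely: take $k\ge 3$, let the rim be $w_0,\dots,w_{2k}$ with $S_2=\{w_0,w_2,\dots,w_{2k}\}$ (the larger class) and $S_1=\{w_1,w_3,\dots,w_{2k-1}\}$, and set $v_0=w_0$. Then the only qualifying vertices in $S_2$ are $w_0$ and $w_{2k}$; the first equals $v_0$ and the second has $w_{2k}^+=w_0=v_0$. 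So no $w$ meets all of your conditions, yet $G$ is a perfectly good simple brick and the lemma must still apply to it.

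The fix is to drop the condition $w^\pm\neq v_0$ entirely; it is not needed. With only $w\neq v_0$ required, existence is immediate: the two ends of a maximal $S_2$-run of length at least two (which exists because $|S_2|>|S_1|$) are qualifying vertices in $S_2$, and at most one of them is $v_0$. If it happens that $v_0\in\{w^-,w^+\}$, the contraction vertex $z$ simply picks up the edge $e=u_0v_0$ as $u_0z$; then $\deg_{H_1}(z)=5$, $\deg_{H_2}(z)=4$, and after deleting $e$ and bicontracting $u_0$ in $H_2$ you still obtain the simple wheel $W_{2k-1}$, since $z$ retains exactly one spoke. The paper handles this without comment; once you do the same, your argument coincides with the paper's.
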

\begin{proof}
We adopt Notation~\ref{Not:strictly_thin_edge}.
Adjust notation so that $|\partial(u_1)| \geq |\partial(u_2)|$.
The contraction vertex of $H$ resulting
from the bicontraction of $u_0$ has degree at least four, whence the hub of $H$ is indeed the contraction vertex.
We let $Q$ denote the rim of $H$. Note that $Q$ is a cycle of length $2k+1$ in $G$,
and each vertex of $Q$ is a neighbour of exactly one of $u_1$ and $u_2$.
Since $2k+1 \geq 7$, the vertex $u_1$ has at least four neighbours in $V(Q)$, consequently $|\partial(u_1)| \geq 5$.
Furthermore, since $|\partial(u_1)| \geq |\partial(u_2)|$, the cycle $Q$ has two consecutive vertices
such that each of them is a neighbour of $u_1$.

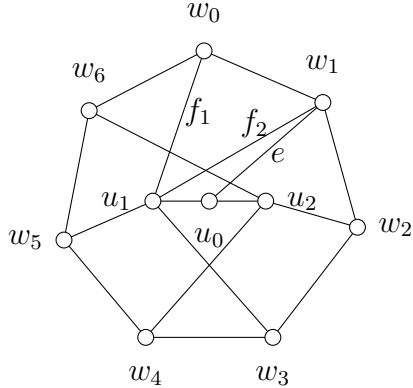
\begin{figure}[!htb]
\centering
\begin{tikzpicture}[scale=1]
\draw (34:1.1)node[nodelabel]{$e$};
\draw (97:1.2)node[nodelabel]{$f_1$};
\draw (60:1.2)node[nodelabel]{$f_2$};

\draw (0:0) -- (41:2);

\draw (180:0.75) -- (92:2);
\draw (180:0.75) -- (41:2);
\draw (0:0.75) -- (350:2);
\draw (180:0.75) -- (295:2);
\draw (0:0.75) -- (245:2);
\draw (180:0.75) -- (195:2);
\draw (0:0.75) -- (143:2);

\draw (180:0.75)node{}node[nodelabel,left]{$u_1$} -- (0:0)node{}node[nodelabel,below]{$u_0$} -- (0:0.75)node{}node[nodelabel,right]{$u_2$};

\draw (245:2) -- (195:2)node{}node[nodelabel,left]{$w_5$} -- (143:2)node{}node[nodelabel,above]{$w_6$} --
(92:2)node{}node[nodelabel,above]{$w_0$} -- (41:2)node{}node[nodelabel,above]{$w_1$} --
(350:2)node{}node[nodelabel,right]{$w_2$} -- (295:2)node{}node[nodelabel,below]{$w_3$} --
(245:2)node{}node[nodelabel,below]{$w_4$};

\end{tikzpicture}
\vspace*{-0.2in}
\caption{Illustration for the proof of Lemma~\ref{lem:wheels_index_1}}
\label{fig:wheels_index_1}
\end{figure}

\smallskip
We choose a maximal path $P$, in $Q$, such that $|V(P)| \geq 2$ and each
vertex of $P$ is a neighbour of $u_1$. Since $u_2$ has at least two neighbours in $V(Q)$, $|V(P)| \leq 2k-1$.
At least one of the ends of $P$ is distinct from the end~$v_0$ of edge $e$. We
label the vertices of $Q$, in cyclic order: $(w_0, w_1, \dots, w_{2k},w_0)$, such that $P:= (w_0, w_1, \dots, w_i)$ and $w_0 \neq v_0$.
Note that $w_1u_1,w_{2k}u_2 \in E(G)$.
We let $f_1 := w_0u_1$ and $f_2 := w_1u_1$. See Figure~\ref{fig:wheels_index_1} for an example.

\smallskip
Observe that only one end of $f_1$, namely $w_0$, is cubic.
We let $H_1$ denote the retract of $G-f_1$. Note that $H_1$ is simple, and that its unique contraction vertex is incident with the edge $f_2$.
Consequently, each end of $f_2$ is noncubic in $H_1$. We let $H_2:=H_1-f_2$. Observe that exactly one end of $e$, namely $u_0$,
is cubic in $H_2$, and that the retract of $H_2 - e$ is indeed the odd wheel $W_{2k-1}$.
By Theorem~\ref{thm:brick-expansion}, $H_2$ is a (simple) brick,
and $e$ a strictly thin edge of index one in $H_2$.
Since $H_1$ is obtained from $H_2$ by adding an edge, $H_1$ is also a (simple) brick and $f_2$ a strictly thin edge of index zero in $H_1$.
Consequently, $f_1$ is a strictly thin edge of index one in $G$.
This completes the proof of Lemma~\ref{lem:wheels_index_1}.
\end{proof}

We now have the following consequence by repeatedly applying Lemma~\ref{lem:wheels_index_1}.

\begin{cor}
\label{cor:wheels_index_1}
Let $G$ be a simple brick, and let $e$ be a strictly thin edge of index one such that the retract of $G-e$ is an odd wheel.
Then there exists a sequence $G_1, G_2, \dots, G_r$ of simple bricks such that
(i) $G_1 = W_5$;
(ii) $G_r = G$;
(iii) for $2 \leq i \leq r$, $G_i$ has a strictly thin edge $e_i$ such that $G_{i-1}$ is the retract of $G_i-e_i$; and
(iv) $e_2$ is of index one in $G_2$; and
(v) if $r \geq 3$ then $e_3$ is of index zero in $G_3$.
\qed
\end{cor}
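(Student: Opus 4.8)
The plan is to prove the corollary by induction on the number of vertices of $G$, peeling off one application of Lemma~\ref{lem:wheels_index_1} at each step. First I would record a preliminary observation: since $e$ has index one in $G$, the unique contraction vertex of the retract $H$ of $G-e$ has degree at least four, so $H$ cannot be $K_4=W_3$; hence $H=W_{2k+1}$ for some $k\geq 2$, which separates the base case $k=2$ from the inductive step $k\geq 3$.

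In the base case $H=W_5$ I would simply take $r:=2$, $G_1:=W_5$, $G_2:=G$ and $e_2:=e$; then (i)--(iv) hold at once and (v) is vacuous. So suppose $k\geq 3$. The key move is to apply Lemma~\ref{lem:wheels_index_1} to $G$ and $e$, obtaining edges $f_1,f_2$, the simple brick $H_1$ (the retract of $G-f_1$), and $H_2:=H_1-f_2$, with $f_1$ strictly thin of index one in $G$, $f_2$ strictly thin of index zero in $H_1$, and $e$ strictly thin of index one in $H_2$ whose deletion-retract is the odd wheel $W_{2k-1}$. Since $f_1$ has index one, $H_1$ (hence also $H_2$) has strictly fewer vertices than $G$, so the induction hypothesis applies to the pair $(H_2,e)$.

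Writing $\widehat G_1,\dots,\widehat G_m$ with edges $\widehat e_2,\dots,\widehat e_m$ for the sequence supplied by the induction hypothesis for $H_2$, I would form the extended sequence by appending $H_1$ and $G$: set $r:=m+2$, $G_i:=\widehat G_i$ and $e_i:=\widehat e_i$ for $i\leq m$, $G_{m+1}:=H_1$ with $e_{m+1}:=f_2$, and $G_{m+2}:=G$ with $e_{m+2}:=f_1$. Properties (i) and (ii) are immediate. Property (iii) is verified index by index: for $i\leq m$ it is the induction hypothesis; for $i=m+1$ it holds because $f_2$ has index zero, so $H_2=H_1-f_2$ is its own retract; and for $i=m+2$ it is the definition of $H_1$.

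The only point needing care --- the nearest thing to an obstacle --- is conditions (iv) and (v), which constrain the \emph{second} and \emph{third} entries of the spliced sequence, so I must be sure the induction hypothesis actually supplies those constraints for $\widehat G_2$ and $\widehat G_3$ rather than running out of terms. For this I would note that $H_2\neq W_5$: since deleting $e$ from $H_2$ produces a degree-two vertex (index one), its retract $W_{2k-1}$ has strictly fewer vertices than $H_2$, so $H_2$ has more than $2k\geq 6$ vertices while $W_5$ has only six. Hence $m\geq 2$, and (iv) transfers verbatim. For (v), since $r=m+2\geq 4$ the hypothesis is non-vacuous; if $m=2$ then $e_3=f_2$ has index zero in $G_3=H_1$ by the conclusion of Lemma~\ref{lem:wheels_index_1}, and if $m\geq 3$ then $e_3=\widehat e_3$ has index zero in $G_3=\widehat G_3$ by the induction hypothesis. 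Beyond this bookkeeping the argument is a direct unwinding of the lemma, so I expect no real difficulty.
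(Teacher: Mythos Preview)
Your proposal is correct and is precisely the intended argument: the paper states only that the corollary follows ``by repeatedly applying Lemma~\ref{lem:wheels_index_1}'' and omits the details, and your induction on $|V(G)|$ carries this out faithfully, including the bookkeeping for conditions (iv) and (v). The only minor remark is that your verification that $H_2\neq W_5$ could be stated more directly as $|V(H_2)|=2k+2\geq 8$, but the reasoning is sound.
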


\subsection{Index two}

\begin{prop}
\label{prop:Murty_graph_index_2}
Let $G$ be a simple brick, and let $e$ be a strictly thin edge of index two such that the retract of $G-e$ is the Murty graph.
Then $G$ is neither \BvN\ nor \PMc.
\end{prop}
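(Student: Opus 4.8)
The plan is to invoke Theorems~\ref{thm:BvN-iff-no-odd-conformal-bicycle}~and~\ref{thm:PMc-iff-no-even-conformal-bicycle}: I will exhibit inside $G$ an odd conformal bicycle (so that $G$ is not \BvN) and, separately, an even conformal bicycle (so that $G$ is not \PMc). The crux is first to pin down $G$ up to isomorphism; once that is done, writing down the two bicycles is a short check.

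First I would adopt Notation~\ref{Not:strictly_thin_edge}, so $e=u_0v_0$. Since $e$ has index two, both $u_0$ and $v_0$ are cubic in $G$ with no common neighbour, and the retract $H$ of $G-e$ has two contraction vertices of degree at least four: one arising by bicontracting $u_0$ (which merges $u_1$ and $u_2$), the other by bicontracting $v_0$ (which merges $v_1$ and $v_2$); in particular $u_1,u_2,v_1,v_2$ are four distinct vertices. Since $G-e$ is matching covered (as $e$ is thin), $u_1$ and $u_2$ are non-adjacent --- otherwise the edge $u_1u_2$ would lie in no perfect matching of $G-e$, as $u_0$ would be left unmatched --- and likewise $v_1$ and $v_2$ are non-adjacent. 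Now $H$ is the Murty graph, whose only vertices of degree at least four are its two noncubic vertices $a_1$ and $a_2$, each of degree exactly four; hence the two contraction vertices are precisely $a_1$ and $a_2$, and we may adjust notation so that bicontracting $u_0$ yields $a_1$ and bicontracting $v_0$ yields $a_2$. A degree count then forces $u_1,u_2,v_1,v_2$ all to be cubic in $G$: the degree of $a_1$ in $H$ equals the sum of the degrees of $u_1$ and $u_2$ in $G$ minus two, and this is $4$, so $u_1$ and $u_2$ are cubic since $G$ has minimum degree three. It follows that $G$ is obtained from the Murty graph --- labelled as in Figure~\ref{fig:K4_splice_K33_and_Murty_graph}(b) --- by replacing $a_1$ by a path $u_1u_0u_2$ and $a_2$ by a path $v_1v_0v_2$, splitting the four edges at $a_1$ (namely $a_1b_1,a_1b_2,a_1b_3,a_1a_2$) two-and-two between $u_1$ and $u_2$, splitting the four edges at $a_2$ two-and-two between $v_1$ and $v_2$, and finally adding $e=u_0v_0$. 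In particular the unique edge $a_1a_2$ becomes the unique edge of $G$ joining $\{u_1,u_2\}$ to $\{v_1,v_2\}$.

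Relabel within $\{u_1,u_2\}$ and within $\{v_1,v_2\}$ so that $u_1v_1$ is this unique edge; then $u_1$ is adjacent to exactly one of $b_1,b_2,b_3$, say $b_\alpha$, and $v_1$ to exactly one, say $b_\beta$, while $u_2$ is adjacent to the other two and $v_2$ to the other two. Permuting the labels $\{1,2,3\}$ on the pairs $(b_i,t_i)$ (an isomorphism coming from an automorphism of the Murty graph), together with the isomorphism of $G$ interchanging the $u$-cluster and the $v$-cluster (which transposes $\alpha$ and $\beta$), shows that up to isomorphism there are exactly two cases: $\alpha=\beta$ and $\alpha\neq\beta$. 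In the first case, say $\alpha=\beta=1$, so that $b_1$ is adjacent to both $u_1$ and $v_1$ while $b_2$ and $b_3$ are each adjacent to both $u_2$ and $v_2$: the triangles $(u_1,b_1,v_1,u_1)$ and $(t_1,t_2,t_3,t_1)$ form an odd conformal bicycle, the six remaining vertices being matched by $\{u_0v_0,u_2b_2,v_2b_3\}$; and the $4$-cycles $(b_2,u_2,b_3,v_2,b_2)$ and $(u_0,u_1,v_1,v_0,u_0)$ form an even conformal bicycle, the four remaining vertices being matched by $\{b_1t_1,t_2t_3\}$. In the second case, say $\alpha=1$ and $\beta=2$, one may instead take the $5$-cycle $(u_0,u_1,b_1,v_2,v_0,u_0)$ together with the triangle $(t_1,t_2,t_3,t_1)$ for the odd conformal bicycle (the remaining vertices being matched by $\{u_2b_3,v_1b_2\}$), and the $4$-cycle $(u_0,u_1,v_1,v_0,u_0)$ together with the $6$-cycle $(u_2,b_3,t_3,t_1,t_2,b_2,u_2)$ for the even conformal bicycle (the remaining vertices being matched by $\{v_2b_1\}$). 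By Theorems~\ref{thm:BvN-iff-no-odd-conformal-bicycle}~and~\ref{thm:PMc-iff-no-even-conformal-bicycle}, $G$ is in either case neither \BvN\ nor \PMc.

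The main obstacle is organizational rather than conceptual: carrying out the reduction to the two cases correctly --- that is, checking that the surviving symmetries are exactly the $S_3$ permuting the pairs $(b_i,t_i)$ and the transposition $\alpha\leftrightarrow\beta$ --- and then verifying, in each of the two resulting graphs, that the two cycles claimed really are vertex-disjoint cycles of $G$ and that the subgraph induced on the remaining six (respectively four and two) vertices has a perfect matching. The preliminary structural step is short but load-bearing: it relies on $G-e$ being matching covered (to force the non-adjacency of $u_1,u_2$ and of $v_1,v_2$) and on the Murty graph having exactly two vertices of degree four (to locate the contraction vertices and force $u_1,u_2,v_1,v_2$ to be cubic), and everything downstream depends on getting it right.
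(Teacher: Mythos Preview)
Your proof is correct and follows essentially the same approach as the paper: identify the two contraction vertices with $a_1,a_2$, observe that the $a_1a_2$ edge forces a unique edge between $\{u_1,u_2\}$ and $\{v_1,v_2\}$, use the $S_3$-symmetry on the $(b_i,t_i)$ pairs (plus the swap of the two clusters) to reduce to two isomorphism types, and exhibit a conformal bicycle in each. Your write-up is in fact slightly more complete than the paper's --- you spell out the degree count forcing $u_1,u_2,v_1,v_2$ to be cubic, justify the non-adjacency of $u_1,u_2$, and actually display the odd conformal bicycles that the paper leaves to the reader.
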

\begin{proof}
We adopt Notation~\ref{Not:strictly_thin_edge}.
We label the vertices of $H$ as shown in Figure~\ref{fig:Murty_graph_index_2}. Each of the contraction vertices of  $H$,
resulting from the bicontraction of~$u_0$ and of~$v_0$, has degree at least four.
Thus $a_1$ and $a_2$ are indeed the contraction vertices.
Since $a_1a_2 \in E(H)$, we infer that $G$ has exactly one edge joining a vertex in $\{u_1,u_2\}$ with a vertex
in $\{v_1,v_2\}$.
Adjust notation so that $u_2v_2$ is an edge of $G$.

\smallskip
The vertex $v_1$ has precisely two neighbours in the set $\{b_1,b_2,b_3\}$,
and we may adjust notation so that $v_1b_2,v_1b_3 \in E(G)$.
Now, since $H$ is simple, we infer that $v_2$ is not adjacent with either of $b_2$~and~$b_3$.
Consequently, $v_2b_1 \in E(G)$.
Furthermore, $u_1$ is adjacent with at least one of $b_2$ and $b_3$, and we may adjust notation so that $u_1b_3 \in E(G)$.
Thus the graph shown in Figure~\ref{fig:Murty_graph_index_2}~(right) is a subgraph of $G$.

\begin{figure}[!htb]
\centering
\begin{tikzpicture}[scale=1]
\draw (-0.75,1.5) -- (0.75,1.5);

\draw (0.75,1.5) -- (-1.5,0) -- (-0.75,1.5) -- (0,0) -- (0.75,1.5) -- (1.5,0) -- (-0.75,1.5);

\draw (0,0) -- (0,-1);
\draw (-1.5,0) -- (-1,-2);
\draw (1.5,0) -- (1,-2);

\draw (0,-1) -- (-1,-2) -- (1,-2) -- (0,-1);

\draw (0.75,1.5)node{}node[above,nodelabel]{$a_2$};
\draw (-0.75,1.5)node{}node[above,nodelabel]{$a_1$};

\draw (0,0)node{}node[left,nodelabel]{$b_2$};
\draw (-1.5,0)node{}node[left,nodelabel]{$b_1$};
\draw (1.5,0)node{}node[right,nodelabel]{$b_3$};

\draw (0,-1)node{}node[below,nodelabel]{$t_2$};
\draw (-1,-2)node{}node[left,nodelabel]{$t_1$};
\draw (1,-2)node{}node[right,nodelabel]{$t_3$};

\draw (0,-2.5)node[nodelabel]{$H$};
\end{tikzpicture}
\hspace*{0.5in}
\begin{tikzpicture}[scale=1]
\draw (-2.3,1.5) -- (-1.7,1.9) -- (-1.1,2.3);
\draw (2.3,1.5) -- (1.7,1.9) -- (1.1,2.3);
\draw (-1.1,2.3) -- (1.1,2.3);
\draw (-1.7,1.9) -- (1.7,1.9);
\draw (2.3,1.5) -- (1.5,0);
\draw (2.3,1.5) -- (0,0);
\draw (1.1,2.3) -- (-1.5,0);
\draw (-2.3,1.5) -- (1.5,0);




\draw (0,0) -- (0,-1);
\draw (-1.5,0) -- (-1,-2);
\draw (1.5,0) -- (1,-2);

\draw (0,-1) -- (-1,-2) -- (1,-2) -- (0,-1);


\draw (0,0)node{}node[left,nodelabel]{$b_2$};
\draw (-1.5,0)node{}node[left,nodelabel]{$b_1$};
\draw (1.5,0)node{}node[right,nodelabel]{$b_3$};

\draw (0,-1)node{}node[below,nodelabel]{$t_2$};
\draw (-1,-2)node{}node[left,nodelabel]{$t_1$};
\draw (1,-2)node{}node[right,nodelabel]{$t_3$};

\draw (-2.3,1.5)node{}node[above left, nodelabel]{$u_1$};
\draw (-1.7,1.9)node{}node[above left, nodelabel]{$u_0$};
\draw (-1.1,2.3)node{}node[above, nodelabel]{$u_2$};

\draw (2.3,1.5)node{}node[above right, nodelabel]{$v_1$};
\draw (1.7,1.9)node{}node[above right, nodelabel]{$v_0$};
\draw (1.1,2.3)node{}node[above, nodelabel]{$v_2$};

\end{tikzpicture}
\vspace*{-0.2in}
\caption{Illustration for the proof of Proposition~\ref{prop:Murty_graph_index_2}}
\label{fig:Murty_graph_index_2}
\end{figure}

Now there are two possibilities:
either $u_1b_1,u_2b_2 \in E(G)$, or otherwise $u_1b_2,u_2b_1 \in E(G)$. In each case, we present an even conformal bicycle
$(C_1,C_2)$, and we leave it to the reader to find an odd conformal bicycle.
Let $C_1:=(u_2,v_2,v_0,u_0,u_2)$.
If $u_1b_1,u_2b_2 \in E(G)$ then let $C_2:=(t_1,t_2,b_2,v_1,b_3,t_3,t_1)$.
If $u_1b_2,u_2b_1 \in E(G)$ then let $C_2:=(u_1,b_2,v_1,b_3,u_1)$.
Thus $G$ is neither \BvN\ nor \PMc.
This completes the proof of Proposition~\ref{prop:Murty_graph_index_2}.
\end{proof}

\subsection{Index three}


\begin{prop}
\label{prop:W5_W7_index_3}
Let $G$ be a simple brick, and let $e$ be a strictly thin edge of index three such that the retract of $G-e$ is an odd wheel
$W_{2k+1}$, where $k \in \{2,3\}$.
If $k=2$ then $G$ is not \BvN.
If $k=3$ then either $G$ is not \BvN, or $G$ is not \PMc, possibly both.
\end{prop}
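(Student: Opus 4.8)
The plan is to adopt Notation~\ref{Not:strictly_thin_edge} and first pin down the structure of $G$. Since $e$ has index three, both $u_0$ and $v_0$ are cubic in $G$, they have a common neighbour $u_1=v_1$, the edge $e$ lies in the triangle $T:=(u_0,v_0,u_1,u_0)$, and (by Proposition~\ref{prop:brick-minus-thin-edge-cases}) the retract $H=W_{2k+1}$ has a single contraction vertex $z$ of degree at least five. As every rim vertex of an odd wheel has degree exactly three while the hub has degree $2k+1\ge 5$, the contraction vertex $z$ must be the hub of $W_{2k+1}$. Tracking the two bicontractions (of $u_0$ and then of $v_0$) that produce $H$ shows that the set of rim vertices $\{w_0,w_1,\dots,w_{2k}\}$ of $Q$ is partitioned into three nonempty sets: the set $N_1$ of neighbours of $u_1$ other than $u_0,v_0$, the set $N_2$ of neighbours of $u_2$ other than $u_0$, and the set $N_3$ of neighbours of $v_2$ other than $v_0$; since $u_1,u_2,v_2$ each have degree at least three in $G$, we get $|N_1|\ge 1$, $|N_2|\ge 2$, $|N_3|\ge 2$ and $|N_1|+|N_2|+|N_3|=2k+1$. (Also $u_2v_2\notin E(G)$, as such an edge would become a loop of $H$ at $z$.)

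For $k=2$ this forces $|N_1|=1$ and $|N_2|=|N_3|=2$. Up to the dihedral symmetry of the rim of $W_5$ and the relabelling $u_2\leftrightarrow v_2$, there are only two essentially different configurations, according to whether one of $N_2,N_3$ consists of two consecutive rim vertices or not. If, say, $N_2=\{w_i,w_{i+1}\}$, then $C:=(u_2,w_i,w_{i+1},u_2)$ is a triangle vertex-disjoint from $T$, and $G-V(T)-V(C)$ is the three-vertex subpath $Q-w_i-w_{i+1}$ together with $v_2$; this is matchable, since $v_2$ has two neighbours among only three rim vertices and hence one at an end of the path. Thus $(T,C)$ is an odd conformal bicycle and $G$ is not \BvN. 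In the remaining configuration we may take $N_1=\{w_0\}$, $N_2=\{w_1,w_3\}$, $N_3=\{w_2,w_4\}$; then $C:=(u_2,w_1,w_0,w_4,w_3,u_2)$ is an odd cycle disjoint from $T$ with $G-V(T)-V(C)=\{w_2,v_2\}$, which is spanned by the edge $w_2v_2$, so again $(T,C)$ is an odd conformal bicycle and $G$ is not \BvN.

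For $k=3$ we have $|N_1|+|N_2|+|N_3|=7$, and up to the symmetry of the rim of $W_7$ and the relabelling $u_2\leftrightarrow v_2$ there are only finitely many cases: the shapes $(|N_1|,|N_2|,|N_3|)$ are $(1,2,4)$, $(1,3,3)$, $(2,2,3)$ and $(3,2,2)$, and for each shape there are only finitely many placements of $N_1,N_2,N_3$ around the $7$-cycle. The main template is again: whenever $u_2$ or $v_2$ has two consecutive neighbours $w_i,w_{i+1}$ on the rim (which always happens, for instance, whenever one of $N_2,N_3$ has size at least four), take the triangle on that vertex with $w_i,w_{i+1}$ as $C_1$ and $T$ as $C_2$, and check that the leftover — a five-vertex subpath of $Q$ together with the remaining one of $u_2,v_2$ — has a perfect matching; this produces an odd conformal bicycle and shows $G$ is not \BvN. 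The few residual configurations (in which the neighbourhoods of both $u_2$ and $v_2$ are non-consecutive, and in which no suitable triangle leaves a matchable remainder) are treated individually: in each such case one exhibits an explicit conformal bicycle — an odd one through $T$, or else an even one built from a short even cycle through $e$ and an even cycle on the rim — which shows $G$ fails to be \BvN\ or fails to be \PMc.

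I expect the $k=3$ analysis to be the main obstacle. The delicate points are enumerating the placements of $N_1,N_2,N_3$ on the $7$-cycle so that no configuration is missed, and, in each residual case, either choosing the right odd cycle to pair with $T$ or switching to an even conformal bicycle — and then verifying that the complement of the chosen bicycle really does have a perfect matching. By contrast, the $k=2$ argument above is short and self-contained.
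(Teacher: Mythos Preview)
Your plan is correct and follows essentially the same route as the paper: identify the contraction vertex with the hub, view the rim as partitioned into $N_1,N_2,N_3$, dispose of $k=2$ via odd conformal bicycles through the triangle $T$ (your two-configuration split is equivalent to the paper's three-subcase split), and for $k=3$ organise the analysis around whether $u_2$ or $v_2$ has two consecutive rim neighbours, with residual cases handled ad hoc.

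One point of caution on the $k=3$ template: even when, say, $u_2$ has consecutive rim neighbours $w_i,w_{i+1}$, the leftover five-vertex subpath together with $v_2$ is \emph{not} always matchable --- it fails exactly when $N_3$ lands on the two interior even-position vertices of that path --- so the ``residual'' set is a bit larger than your parenthetical suggests (read it as a disjunction, not a conjunction). The paper meets this subcase not with an odd bicycle through $T$ but with an even conformal bicycle, exactly along the lines you anticipate in your last sentence; just be sure your enumeration for $k=3$ accounts for these consecutive-but-unmatchable instances as well.
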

\begin{proof}
We adopt Notation~\ref{Not:strictly_thin_edge}.
The shrinking of $\{u_2,u_0,u_1,v_0,v_2\}$ results in the contraction vertex of $H$ which has degree at least five,
whence the hub of $H$ is indeed the contraction vertex.
Let $Q$ denote the rim of $H$, and label its vertices in cyclic order as follows:
$(w_0,w_1,w_2, \dots, w_{2k},w_0)$.

\smallskip
First suppose that $k=2$.
Since $|V(Q)|=5$, each of $u_2$ and $v_2$ has precisely two neighbours in $V(Q)$,
and $u_1$ has only one neighbour in $V(Q)$. Adjust notation so that $w_0u_1, w_1u_2 \in E(G)$.
See Figure~\ref{fig:W5_index_3}.
Observe that only one of the vertices in $\{w_2,w_3,w_4\}$ is a neighbour of~$u_2$,
and the remaining two are neighbours of $v_2$.
In each case, we present an odd conformal bicycle $(C_1,C_2)$.
Let $C_1 := (u_1,u_0,v_0,u_1)$.
If $w_2u_2 \in E(G)$, then let $C_2:=(w_2,u_2,w_1,w_2)$.
If $w_3u_2 \in E(G)$, then let $C_2:=(w_3,u_2,w_1,w_0,w_4,w_3)$.
If $w_4u_2 \in E(G)$, then let $C_2:=(w_2,v_2,w_3,w_2)$.
Thus, when $k=2$, $G$ is not \BvN. This proves the first part.

\begin{figure}[!htb]
\centering
\begin{tikzpicture}[scale=1]

\draw (90:2) -- (90:0.4);
\draw (162:2) -- (160:0.95);

\draw (210:0.5) -- (330:0.5);

\draw (160:0.95)node{} -- (210:0.5)node{} -- (90:0.4)node{} -- (330:0.5)node{} -- (20:0.95)node{};
\draw (160:0.95)node[nodelabel,below]{$u_2$};
\draw (20:0.95)node[nodelabel,below]{$v_2$};
\draw (210:0.5)node[nodelabel,below]{$u_0$};
\draw (330:0.5)node[nodelabel,below]{$v_0$};
\draw (50:0.5)node[nodelabel]{$u_1$};

\draw (90:2) -- (162:2)node{}node[left,nodelabel]{$w_1$} -- (234:2)node{}node[left,nodelabel]{$w_2$} -- (306:2)node{}node[right,nodelabel]{$w_3$} -- (18:2)node{}node[right,nodelabel]{$w_4$} -- (90:2)node{}node[above,nodelabel]{$w_0$};


\end{tikzpicture}
\caption{Illustration for the proof of Proposition~\ref{prop:W5_W7_index_3} when $k=2$}
\label{fig:W5_index_3}
\end{figure}

\begin{figure}[!htb]
\centering
\vspace*{-0.2in}
\begin{tikzpicture}[scale=1.2]

\draw (210:0.5) -- (330:0.5);

\draw (160:0.95) -- (143:2);
\draw (160:0.95) -- (92:2);

\draw (160:0.95)node{} -- (210:0.5)node{} -- (90:0.4)node{} -- (330:0.5)node{} -- (20:0.95)node{};
\draw (160:0.95)node[nodelabel,below]{$u_2$};
\draw (20:0.95)node[nodelabel,below]{$v_2$};
\draw (210:0.5)node[nodelabel,below]{$u_0$};
\draw (330:0.5)node[nodelabel,below]{$v_0$};
\draw (55:0.5)node[nodelabel]{$u_1$};

\draw (245:2) -- (195:2)node{}node[nodelabel,left]{$w_5$} -- (143:2)node{}node[nodelabel,above]{$w_6$} --
(92:2)node{}node[nodelabel,above]{$w_0$} -- (41:2)node{}node[nodelabel,above]{$w_1$} --
(350:2)node{}node[nodelabel,right]{$w_2$} -- (295:2)node{}node[nodelabel,below]{$w_3$} --
(245:2)node{}node[nodelabel,below]{$w_4$};

\draw (270:2.5)node[nodelabel]{(a)};
\end{tikzpicture}
\hspace*{0.2in}
\begin{tikzpicture}[scale=1.2]

\draw (210:0.5) -- (330:0.5);

\draw (160:0.95) -- (143:2);
\draw (160:0.95) -- (92:2);
\draw (20:0.95) -- (350:2);
\draw (20:0.95) to [out=280,in=20] (245:2);

\draw (160:0.95)node{} -- (210:0.5)node{} -- (90:0.4)node{} -- (330:0.5)node{} -- (20:0.95)node{};
\draw (160:0.95)node[nodelabel,below]{$u_2$};
\draw (20:0.95)node[nodelabel,above]{$v_2$};
\draw (210:0.5)node[nodelabel,below]{$u_0$};
\draw (330:0.5)node[nodelabel,below]{$v_0$};
\draw (55:0.5)node[nodelabel]{$u_1$};

\draw (245:2) -- (195:2)node{}node[nodelabel,left]{$w_5$} -- (143:2)node{}node[nodelabel,above]{$w_6$} --
(92:2)node{}node[nodelabel,above]{$w_0$} -- (41:2)node{}node[nodelabel,above]{$w_1$} --
(350:2)node{}node[nodelabel,right]{$w_2$} -- (295:2)node{}node[nodelabel,below]{$w_3$} --
(245:2)node{}node[nodelabel,below]{$w_4$};

\draw (270:2.5)node[nodelabel]{(b)};
\end{tikzpicture}
\begin{tikzpicture}[scale=1.2]

\draw (210:0.5) -- (330:0.5);

\draw (160:0.95) -- (143:2);
\draw (160:0.95) -- (41:2);
\draw (20:0.95) -- (92:2);

\draw (160:0.95)node{} -- (210:0.5)node{} -- (90:0.4)node{} -- (330:0.5)node{} -- (20:0.95)node{};
\draw (160:0.95)node[nodelabel,below]{$u_2$};
\draw (20:0.95)node[nodelabel,below]{$v_2$};
\draw (210:0.5)node[nodelabel,below]{$u_0$};
\draw (330:0.5)node[nodelabel,below]{$v_0$};
\draw (55:0.5)node[nodelabel]{$u_1$};

\draw (245:2) -- (195:2)node{}node[nodelabel,left]{$w_5$} -- (143:2)node{}node[nodelabel,above]{$w_6$} --
(92:2)node{}node[nodelabel,above]{$w_0$} -- (41:2)node{}node[nodelabel,above]{$w_1$} --
(350:2)node{}node[nodelabel,right]{$w_2$} -- (295:2)node{}node[nodelabel,below]{$w_3$} --
(245:2)node{}node[nodelabel,below]{$w_4$};

\draw (270:2.5)node[nodelabel]{(c)};
\end{tikzpicture}
\hspace*{0.2in}
\begin{tikzpicture}[scale=1.2]

\draw (210:0.5) -- (330:0.5);

\draw (160:0.95) -- (143:2);
\draw (160:0.95) -- (41:2);
\draw (90:0.4) -- (92:2);
\draw (20:0.95) -- (350:2);
\draw (90:0.4) to [out=270,in=150] (295:2);

\draw (160:0.95)node{} -- (210:0.5)node{} -- (90:0.4)node{} -- (330:0.5)node{} -- (20:0.95)node{};
\draw (160:0.95)node[nodelabel,below]{$u_2$};
\draw (20:0.95)node[nodelabel,below]{$v_2$};
\draw (210:0.5)node[nodelabel,below]{$u_0$};
\draw (330:0.5)node[nodelabel,below]{$v_0$};
\draw (55:0.5)node[nodelabel]{$u_1$};

\draw (245:2) -- (195:2)node{}node[nodelabel,left]{$w_5$} -- (143:2)node{}node[nodelabel,above]{$w_6$} --
(92:2)node{}node[nodelabel,above]{$w_0$} -- (41:2)node{}node[nodelabel,above]{$w_1$} --
(350:2)node{}node[nodelabel,right]{$w_2$} -- (295:2)node{}node[nodelabel,below]{$w_3$} --
(245:2)node{}node[nodelabel,below]{$w_4$};

\draw (270:2.5)node[nodelabel]{(d)};
\end{tikzpicture}
\begin{tikzpicture}[scale=1.2]

\draw (210:0.5) -- (330:0.5);

\draw (160:0.95) -- (143:2);
\draw (160:0.95) to [out=270,in=170] (295:2);
\draw (20:0.95) to [out=270,in=10] (245:2);

\draw (160:0.95)node{} -- (210:0.5)node{} -- (90:0.4)node{} -- (330:0.5)node{} -- (20:0.95)node{};
\draw (160:0.95)node[nodelabel,above]{$u_2$};
\draw (20:0.95)node[nodelabel,above]{$v_2$};
\draw (210:0.5)node[nodelabel,below]{$u_0$};
\draw (330:0.5)node[nodelabel,below]{$v_0$};
\draw (55:0.5)node[nodelabel]{$u_1$};

\draw (245:2) -- (195:2)node{}node[nodelabel,left]{$w_5$} -- (143:2)node{}node[nodelabel,above]{$w_6$} --
(92:2)node{}node[nodelabel,above]{$w_0$} -- (41:2)node{}node[nodelabel,above]{$w_1$} --
(350:2)node{}node[nodelabel,right]{$w_2$} -- (295:2)node{}node[nodelabel,below]{$w_3$} --
(245:2)node{}node[nodelabel,below]{$w_4$};

\draw (270:2.5)node[nodelabel]{(e)};
\end{tikzpicture}
\hspace*{0.2in}
\begin{tikzpicture}[scale=1.2]

\draw (210:0.5) -- (330:0.5);

\draw (160:0.95) -- (143:2);
\draw (160:0.95) to [out=270,in=170] (295:2);
\draw (20:0.95) to [out=270,in=10] (245:2);
\draw (90:0.4) -- (350:2);
\draw (90:0.4) -- (195:2);

\draw (160:0.95)node{} -- (210:0.5)node{} -- (90:0.4)node{} -- (330:0.5)node{} -- (20:0.95)node{};
\draw (160:0.95)node[nodelabel,above]{$u_2$};
\draw (20:0.95)node[nodelabel,above]{$v_2$};
\draw (210:0.5)node[nodelabel,below]{$u_0$};
\draw (330:0.5)node[nodelabel,below]{$v_0$};
\draw (55:0.5)node[nodelabel]{$u_1$};

\draw (245:2) -- (195:2)node{}node[nodelabel,left]{$w_5$} -- (143:2)node{}node[nodelabel,above]{$w_6$} --
(92:2)node{}node[nodelabel,above]{$w_0$} -- (41:2)node{}node[nodelabel,above]{$w_1$} --
(350:2)node{}node[nodelabel,right]{$w_2$} -- (295:2)node{}node[nodelabel,below]{$w_3$} --
(245:2)node{}node[nodelabel,below]{$w_4$};

\draw (270:2.5)node[nodelabel]{(f)};
\end{tikzpicture}
\vspace*{-0.2in}
\caption{Illustration for the proof of Proposition~\ref{prop:W5_W7_index_3} when $k=3$}
\label{fig:W7_index_3}
\end{figure}

\smallskip
Now suppose that $k=3$.\footnote{Unlike the $k=2$ case, there are too many possibilities for the graph $G$. More precisely, there are
$46$ nonisomorphic possibilities as can be checked by computations.}
First, let us assume that either $u_2$, or $v_2$, has two neighbours that appear consecutively on the rim $Q$. Adjust notation so that
$u_2w_6,u_2w_0 \in E(G)$. See Figure~\ref{fig:W7_index_3}(a).
Observe that, if $v_2$ is a neighbour of any vertex in $\{w_1, w_3, w_5\}$ then the two triangles $(w_0,u_2,w_6,w_0)$
and $(u_1,u_0,v_0,u_1)$ constitute an odd conformal bicycle, and we are done.
So we may assume that $v_2$ is not a neighbour of any vertex in $\{w_1,w_3,w_5\}$, whence $v_2w_2,v_2w_4 \in E(G)$.
See Figure~\ref{fig:W7_index_3}(b).
Now, $w_1$~is either a neighbour of $u_2$ or of $u_1$.
If $w_1u_2 \in E(G)$, then the two triangles $(w_1,u_2,w_0,w_1)$ and $(u_1,u_0,v_0,u_1)$ constitute an odd conformal bicycle.
If $w_1u_1 \in E(G)$, then the $6$-cycle $(w_1,u_1,v_0,u_0,u_2,w_0,w_1)$ and the $4$-cycle $(w_2,v_2,w_4,w_3,w_2)$
constitute an even conformal bicycle. Thus, in either case, we are done.

\smallskip
Next, let us assume that $u_2$ has two neighbours $w_i$ and $w_{i+2}$ on the rim
and that $v_2$ is a neighbour of $w_{i+1}$, where subscript arithmetic is done modulo $7$.
See Figure~\ref{fig:W7_index_3}(c) for an example.
Then the triangle $(u_1,u_0,v_0,u_1)$ and the $7$-cycle $Q+w_iu_2+w_{i+2}u_2-w_iw_{i+1}-w_{i+1}w_{i+2}$
constitute an odd conformal bicycle, and we are done.

\smallskip
Henceforth, we may assume the following. Neither $u_2$ nor $v_2$ has two neighbours that appear consecutively on the rim $Q$.
Furthemore, if $u_2w_i, u_2w_{i+2} \in E(G)$ then $u_1w_{i+1} \in E(G)$.
Likewise, if $v_2w_i,v_2w_{i+2} \in E(G)$ then $u_1w_{i+1} \in E(G)$.

\smallskip
Let us assume that $u_2$ has two neighbours $w_i$ and $w_{i+2}$ on the rim.
Adjust notation so that $u_2w_6,u_2w_1 \in E(G)$.
It follows from the aforementioned assumptions that $u_1w_0 \in E(G)$.
See Figure~\ref{fig:W7_index_3}(d).
Furthermore, at most one of $w_3$ and $w_4$ is a neighbour of $v_2$.
Since $v_2$ has at least two neighbours in $V(Q)$, at least one of $w_2$ and $w_5$
is a neighbour of $v_2$.
By symmetry, we may adjust notation so that $v_2w_2 \in E(G)$.
Consequently, $w_3$ is not a neighbour of $v_2$.
Furthermore, since $w_1u_2, w_2v_2 \in E(G)$, $w_3$ is not a neighbour of $u_2$.
Thus $w_3u_1 \in E(G)$.
Observe that the $6$-cycle $(w_3,u_1,u_0,v_0,v_2,w_2,w_3)$ and the $4$-cycle $(w_0,w_6,u_2,w_1,w_0)$
constitute an even conformal bicycle, and we are done.

\smallskip
Henceforth we may assume the following. If either $u_2$, or $v_2$,
has two neighbours $w_i,w_j \in V(Q)$ then the distance between $w_i$ and $w_j$ on the rim is exactly three.
Consequently, $u_2$~and~$v_2$ are cubic vertices of $G$.

\smallskip
Adjust notation so that $u_2w_3,u_2w_6 \in E(G)$.
See Figure~\ref{fig:W7_index_3}(e).
It follows from our assumption that at least one of $w_4$ and $w_5$ is a neighbour of $v_2$.
By symmetry, we may adjust notation so that $w_4v_2 \in E(G)$.
Consequently, $w_2$ and $w_5$ are both neighbours of $u_1$.
See Figure~\ref{fig:W7_index_3}(f).
Now the two $6$-cycles $(w_2,u_1,w_5,w_6,w_0,w_1,w_2)$ and $C_2:=(w_3,u_2,u_0,v_0,v_2,w_4,w_3)$
constitute an even conformal bicycle.

\smallskip
Thus, in every case, we have found a conformal bicycle in $G$, whence either $G$ is not \BvN, or $G$ is not \PMc, possibly both.
This proves the second part of Proposition~\ref{prop:W5_W7_index_3}.
\end{proof}

\begin{lem}
\label{lem:wheels_index_3}
Let $G$ be a simple brick, and let $e$ be a strictly thin edge of index three such that the retract of $G-e$ is an odd wheel $W_{2k+1}$
for some $k \geq 4$. Then $G$ has distinct edges $f_1$ and $f_2$, each of which is distinct from $e$, such that the following hold:
\begin{enumerate}[(i)]
\item $f_1$ is strictly thin of index one in $G$; let $H_1$ denote the retract of $G-f_1$;
\item $f_2$ is strictly thin of index zero in $H_1$; let $H_2:=H_1-f_2$;
\item $e$ is strictly thin of index three in $H_2$, and the retract of $H_2-e$ is the odd wheel $W_{2k-1}$.
\end{enumerate}
\end{lem}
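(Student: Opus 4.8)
The plan is to follow the proof of Lemma~\ref{lem:wheels_index_1} quite closely: I would locate a rim vertex $w_0$ of the wheel $W_{2k+1}$ lying below $G$, take $f_1$ to be the ``spoke'' at $w_0$ (an edge of index one) and $f_2$ to be a second spoke, and show that deleting $f_1$, retracting, then deleting $f_2$ produces a brick $H_2$ in which $e$ is again a strictly thin edge of index three, now sitting over the smaller wheel $W_{2k-1}$. To set up, adopt Notation~\ref{Not:strictly_thin_edge} and let $H$ be the retract of $G-e$, which is $W_{2k+1}$ by hypothesis; by Proposition~\ref{prop:brick-minus-thin-edge-cases} its hub is the vertex obtained by identifying the five distinct vertices $u_0,u_1,u_2,v_0,v_2$, so the rim $Q$ of $H$ is a $(2k+1)$-cycle in $G$ each of whose vertices is cubic and carries exactly one spoke, joined to one of $u_1,u_2,v_2$. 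Colour each rim vertex by the hub-part its spoke hits, with colour classes of sizes $a,b,c$ for $u_1,u_2,v_2$ respectively, so $a+b+c=2k+1$.

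Two preliminary facts are then easy. All three colours occur: if, say, $c=0$, then $v_2$, having no neighbour on $Q$ and no neighbour among $u_0,v_0,u_1,u_2$ other than $v_0$, $u_1$, $u_2$, would be forced (to have degree at least three) to be adjacent to both $u_1$ and $u_2$, whence bicontracting $u_0$ in $G-e$ creates a multiple edge at the hub, contradicting simplicity of $W_{2k+1}$. Secondly, $\max\{a,b,c\}\ge\lceil(2k+1)/3\rceil\ge 3$, so the hub-part with the most spokes has degree at least four and is noncubic; hence at most two of $u_1,u_2,v_2$ are cubic, and a cubic one has at most two spokes, so at most four rim vertices have a cubic spoke-target.

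The crux --- and the only place the hypothesis $k\ge 4$, i.e.\ $2k+1\ge 9$, is really needed --- is the claim that some rim vertex $w_0$ has its two $Q$-neighbours with distinct spoke-targets and its own spoke-target noncubic. To see this, let $S$ be the set of rim vertices whose two $Q$-neighbours have distinct spoke-targets. Since at least two colours occur on the odd cycle $Q$, the colouring cannot satisfy ``colour of $w_{i-1}$ equals colour of $w_{i+1}$ for all $i$'' (that would force the colouring constant), so $S\ne\emptyset$. If no vertex of $S$ had a noncubic spoke-target, $S$ would lie inside the set of at most four rim vertices with a cubic spoke-target; deleting those (at most four) vertices from $Q$ leaves gaps containing a total of at least $2k-3\ge 5$ rim vertices, so some gap is a path on at least two vertices, and an endpoint of that path has one $Q$-neighbour inside the gap and one with a cubic spoke-target, hence lies in $S$ and has a noncubic spoke-target --- contradiction. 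Re-index $Q=(w_0,w_1,\dots,w_{2k})$, let $x$ be the (noncubic) spoke-target of $w_0$, and let $s_1\ne s_2$ be the spoke-targets of $w_1$ and $w_{2k}$; adjust notation so $s_1$ is noncubic (at most one of $s_1,s_2$ is cubic). Put $f_1:=w_0x$ and $f_2:=w_1s_1$, which are distinct edges, each distinct from $e$.

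For the verification I would build the chain $W_{2k-1}\to H_2\to H_1\to G$ upwards. Let $H_2$ arise from $G$ by deleting $f_1,f_2$ and bicontracting $w_0$: bicontraction merges $w_1,w_0,w_{2k}$ into a vertex $z$ with the four distinct neighbours $w_2,w_{2k-1},s_1,s_2$ (using $s_1\ne s_2$ and $2k+1\ge 5$), so $H_2$ stays simple, and after deleting $f_2=zs_1$ the vertex $z$ is cubic with unique spoke $s_2$; then $(z,w_2,\dots,w_{2k-1},z)$ is a $(2k-1)$-cycle whose vertices each carry exactly one spoke, so bicontracting $u_0$ then $v_0$ in $H_2-e$ shows the retract of $H_2-e$ is $W_{2k-1}$. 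Thus $H_2$ is the index-three expansion of the brick $W_{2k-1}$ along $e$, so by Theorem~\ref{thm:brick-expansion} it is a simple brick and $e$ is strictly thin of index three in it. Next $H_1:=H_2+f_2$ (the edge $zs_1$, absent from $H_2$) is a simple brick since adding an edge to a brick yields a brick, and $f_2$ is strictly thin of index zero in $H_1$ because $z$ has degree four and $s_1$, noncubic in $G$ and untouched by the bicontraction of $w_0$, still has degree at least four. Finally $G-f_1$ is obtained from $H_1$ by splitting $z$ back into the path $w_1w_0w_{2k}$, so $G$ is the index-one expansion of $H_1$ along $f_1$; hence $G$ is a brick (as we knew), $f_1$ is strictly thin of index one ($w_0$ cubic, $x$ noncubic), and $H_1$ is the retract of $G-f_1$. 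This is exactly assertions (i)--(iii), and the bookkeeping (no multiple edge appears, $zs_1$ is genuinely new, the relevant ends remain noncubic) is of the same flavour as in Lemma~\ref{lem:wheels_index_1}, with the hypotheses $k\ge 4$ and $s_1\ne s_2$ being precisely what makes it go through.
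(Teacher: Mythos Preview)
Your overall strategy matches the paper's: locate a suitable rim vertex, take $f_1$ to be its spoke and $f_2$ a neighbouring spoke, and verify the chain $W_{2k-1}\to H_2\to H_1\to G$ via Theorem~\ref{thm:brick-expansion}. The verification paragraph is fine. The problem is that your ``crux'' is too weak to support the choice of $f_2$.

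Your crux establishes only that some $w_0$ has $\sigma(w_{2k})\neq\sigma(w_1)$ and $\sigma(w_0)$ noncubic. You then assert parenthetically that ``at most one of $s_1,s_2$ is cubic'', but this does not follow. Take $k=4$ with $u_2,v_2$ cubic and spokes arranged as $\sigma(w_0,\dots,w_8)=(u_1,u_2,u_1,u_2,u_1,v_2,u_1,v_2,u_1)$. Then $w_4$ satisfies your crux ($\sigma(w_4)=u_1$ is noncubic, and $\sigma(w_3)=u_2\neq v_2=\sigma(w_5)$), yet \emph{both} $s_1$ and $s_2$ are cubic, so neither choice of $f_2$ has index zero in $H_1$. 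A second, related omission: even when $s_1$ is noncubic in $G$, you need it noncubic in $H_1$, and this fails if $s_1=x$ and $d_G(x)=4$, since deleting $f_1$ drops the degree of $x$ by one. Your sentence ``$s_1$ \ldots untouched by the bicontraction of $w_0$'' overlooks that $s_1$ may be touched by the \emph{deletion} of $f_1$.

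The paper's proof confronts exactly this: its key claim (statement~\ref{sta:existence-of-w}) requires, in addition to your conditions, that at least one of $\sigma(w_{i-1}),\sigma(w_{i+1})$ have degree four or more \emph{in $G-\sigma(w_i)w_i$}. This third condition is established by a short case analysis on how many members of $\{u_1,u_2,v_2\}$ are cubic, using that for $k\ge 4$ the degree sum is at least $13$ (so at most two are cubic and some member has degree at least five). Your gap-deletion idea is close to the paper's ``one cubic'' and ``two cubic'' cases, but you need to run it constructively (not inside a proof by contradiction) and track the extra degree condition. Finally, your side claim that ``all three colours occur'' is both unneeded and wrongly argued: the multiple edge you produce after bicontracting $u_0$ becomes a loop upon the subsequent bicontraction of $v_0$, so it does not contradict simplicity of the retract.
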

\begin{proof}
We adopt Notation~\ref{Not:strictly_thin_edge}.
The shrinking of $\{u_2,u_0,u_1,v_0,v_2\}$ results in the contraction vertex of $H$ which has degree at least five,
whence the hub of $H$ is indeed the contraction vertex.
We let $Q$ denote the rim of $H$,
and we label its vertices in cyclic order as follows: $(w_0, w_1, \dots, w_{2k},w_0)$.
Thus $Q$ is a cycle of length $2k+1$ in $G$.
We let $S:=\{u_2,u_1,v_2\}$.
Since $k \geq 4$, the sum of the degrees of members of~$S$ is at least~$13$;
this proves the following.
\begin{sta}
\label{sta:degrees-of-S}
At most two members of~$S$ are cubic, and at least one member of~$S$
has degree five or more. \qed
\end{sta}

Each vertex in $V(Q)$ is a neighbour of exactly one vertex in $S$.
This helps us define a function
$\sigma:V(Q) \rightarrow S$ as follows.
For each $w_i \in V(Q)$, let $\sigma(w_i)$ denote the unique
vertex in $S$ that is a neighbour of $w_i$.
Now we shall establish the following; it will help us in locating
a pair of edges $f_1$ and $f_2$ that satisfy statements {\it (i), (ii)} and {\it (iii)}.
\begin{sta}
\label{sta:existence-of-w}
There exists $w_i \in V(Q)$ such that:
\begin{enumerate}[(a)]
\item $\sigma(w_i)$ has degree four or more in~$G$,
\item $\sigma(w_{i-1})
\neq\footnote{All of the subscript arithmetic is done modulo $2k+1$.}
~\sigma(w_{i+1})$, and
\item at least one of $\sigma(w_{i-1})$ and $\sigma(w_{i+1})$
has degree four or more in~$G-\sigma(w_i)w_i$,
\end{enumerate}
\end{sta}
\begin{proof}
By \ref{sta:degrees-of-S}, the set~$S$ has zero, one or two cubic vertices.

\smallskip
First we consider the case: $S$ has zero cubic vertices.
Since $Q$ is an odd cycle, and since each member of~$S$ has at least
one neighbour in~$V(Q)$, it follows that there
exists $w_i \in V(Q)$
such that $\sigma(w_{i-1}) \neq \sigma(w_{i+1})$;
whence at least one of $\sigma(w_{i-1})$ and $\sigma(w_{i+1})$
is different from~$\sigma(w_i)$; this yields the desired conclusion.

\smallskip
Now we consider the case: $S$ has precisely one cubic vertex, say~$z$.
The graph~$Q-N(z)$ has at most two components; each of these components
is a path; at least one of them, say~$P$, has four or more vertices.
Adjust notation so that $P:=(w_i, w_{i+1}, \dots, w_{j-1},w_j)$ where $i < j$.
Note that $w_i$ and $w_j$ both satisfy statements {\it (a)} and {\it (b)}.
Furthermore, if either $\sigma(w_i)$ has degree five or more, or if $\sigma(w_i) \neq \sigma(w_{i+1})$, then $w_i$ satisfies statement {\it (c)}.
Likewise, if $\sigma(w_j)$ has degree five or more,
or if $\sigma(w_j) \neq \sigma(w_{j-1})$,
then $w_j$ satisfies statement {\it (c)}.
Now suppose that none of these conditions hold;
in other words, $\sigma(w_i)=\sigma(w_{i+1})$ and $\sigma(w_j)=\sigma(w_{j-1})$
are vertices of degree precisely four.
Observe that $\sigma(w_i) \neq \sigma(w_j)$ --- since otherwise the degree of $\sigma(w_i)$ is five or more.
Consequently,
$S=\{z,\sigma(w_i),\sigma(w_j)\}$; this contradicts~\ref{sta:degrees-of-S}.

\smallskip
Finally, we consider the case: $S$ has precisely two cubic vertices, say~$y$ and $z$.
The graph~$Q-N(y)-N(z)$ has at most four components; each of these components is
a path; at least one of them, say~$P$, has two or more vertices.
We let $w_i$ denote an end of~$P$.
By \ref{sta:degrees-of-S}, the vertex~$\sigma(w_i)$ has degree at least five;
note that $w_i$ satisfies statements {\it (a), (b)} and {\it (c)}.

\smallskip
This completes the proof of \ref{sta:existence-of-w}.
\end{proof}

We invoke \ref{sta:existence-of-w}, and adjust notation so that $\sigma(w_1)$
has degree four or more (in~$G$), $\sigma(w_0) \neq \sigma(w_2)$
and $\sigma(w_2)$ has degree four or more in~$G-\sigma(w_1)w_1$.
We let $f_1:=\sigma(w_1)w_1$ and $f_2:=\sigma(w_2)w_2$.

\smallskip
Observe that only one end of $f_1$, namely $w_1$, is cubic. We let $H_1$ denote the retract of $G-f_1$.
Note that $H_1$ is simple since $\sigma(w_0) \neq \sigma(w_{2})$, and that its unique contraction vertex is incident
with the edge $f_2$.
Also, since $\sigma(w_2)$ has degree at least four in $G-f_1$,
each end of $f_2$ is noncubic in $H_1$.
We let $H_2 := H_1- f_2$.
Observe that both ends of $e$ are cubic in~$H_2$, and that the retract of $H_2-e$ is
indeed the odd wheel $W_{2k-1}$.
By Theorem~\ref{thm:brick-expansion},
$H_2$ is a (simple) brick, and $e$ is a strictly thin edge of
index three in $H_2$.
Since $H_1$ is obtained from $H_2$ by adding an edge,
$H_1$ is also a (simple) brick
and $f_2$ a strictly thin edge of index zero in $H_1$.
Consequently, $f_1$ is a strictly thin edge of index one in $G$.
This completes the proof of Lemma~\ref{lem:wheels_index_3}.
\end{proof}

We now have the following consequence by repeatedly applying Lemma~\ref{lem:wheels_index_3}.

\begin{cor}
\label{cor:wheels_index_3}
Let $G$ be a simple brick, and let $e$ be a strictly thin edge of index three such that the retract of $G-e$ is an odd wheel $W_{2k+1}$
where $k \geq 3$.
Then there exists a sequence $G_1, G_2, \dots, G_r$ of simple bricks such that
(i) $G_1$ is $W_7$;
(ii) $G_r = G$;
(iii) for $2 \leq i \leq r$, $G_i$ has a strictly thin edge $e_i$ such that $G_{i-1}$ is the retract of $G_i-e_i$; and
(iv) $e_2$ is of index three in~$G_2$.
\qed
\end{cor}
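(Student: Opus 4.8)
The plan is to prove the statement by induction on $k$, using Lemma~\ref{lem:wheels_index_3} as the inductive step and the case $k=3$ as the base.

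For the base case $k=3$, there is nothing to do: take $r:=2$, $G_1:=W_7$, $G_2:=G$ and $e_2:=e$. By hypothesis $G$ is a simple brick, $e$ is strictly thin of index three in $G$, and the retract of $G-e$ is $W_{2\cdot3+1}=W_7$; so conditions (i)--(iv) hold immediately.

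For the inductive step, suppose $k\geq 4$ and that the corollary holds whenever the retract of $G-e$ is an odd wheel $W_{2k'+1}$ with $3\leq k'<k$. Apply Lemma~\ref{lem:wheels_index_3} to the pair $(G,e)$ to obtain edges $f_1,f_2$, the brick $H_1$ (the retract of $G-f_1$) and the graph $H_2:=H_1-f_2$, with: $f_1$ strictly thin of index one in $G$; $f_2$ strictly thin of index zero in $H_1$; and $e$ strictly thin of index three in $H_2$ with the retract of $H_2-e$ equal to $W_{2k-1}$. Since $f_1$ is strictly thin in the simple brick $G$ its retract $H_1$ is a simple brick, and since $f_2$ has index zero in $H_1$ we have (by Proposition~\ref{prop:brick-minus-thin-edge-cases}) that the retract of $H_1-f_2$ equals $H_1-f_2=H_2$, which is therefore a simple brick. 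As $k-1\geq 3$, the induction hypothesis applied to $(H_2,e)$ yields a sequence $G_1,\dots,G_s$ of simple bricks with $G_1=W_7$, $G_s=H_2$, strictly thin edges $e_i$ ($2\leq i\leq s$) for which $G_{i-1}$ is the retract of $G_i-e_i$, and with $e_2$ of index three in $G_2$.

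It then remains only to append two terms. Set $G_{s+1}:=H_1$ with $e_{s+1}:=f_2$, set $G_{s+2}:=G$ with $e_{s+2}:=f_1$, and put $r:=s+2$. All terms of the extended sequence are simple bricks (the new ones being $H_1$ and $G$, handled above); (iii) holds at the two new indices because the retract of $G_{s+1}-e_{s+1}=H_1-f_2$ is $H_2=G_s$ and the retract of $G_{s+2}-e_{s+2}=G-f_1$ is $H_1=G_{s+1}$; (ii) and (iv) are inherited unchanged from the induction hypothesis, since prepending never alters $G_1$, $G_2$ or $e_2$, and in the base case $e_2$ is itself of index three. This completes the induction.

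No step here is genuinely difficult: the argument is pure bookkeeping once Lemma~\ref{lem:wheels_index_3} is available, that lemma already having done the real work of producing $f_1,f_2$ and checking that deleting them and passing to retracts shrinks the wheel by two while preserving simplicity and brick-ness. The only thing warranting a moment's attention is verifying that clause (iv) about the index of $e_2$ survives the recursion, which it does precisely because we only ever prepend a shorter sequence whose first two graphs and whose edge $e_2$ we leave untouched.
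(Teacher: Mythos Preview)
Your proof is correct and follows exactly the approach the paper intends: the corollary is stated with a \qed and the preceding sentence says it follows ``by repeatedly applying Lemma~\ref{lem:wheels_index_3}'', which is precisely the induction on~$k$ you have written out in detail. (One tiny wording slip: in your final paragraph you write ``prepending'' where you mean ``appending''.)
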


\subsection{Proof of Theorem~\ref{thm:BvN-PMc-bricks}}

\begin{proofOf}{Theorem~\ref{thm:BvN-PMc-bricks}}
Let $G$ be a brick that is \BvN\ as well as \PMc. We induct on the number of edges.

\smallskip
First suppose that either $G$ is not simple and let $e$ denote a multiple edge of $G$,
or otherwise $G$ is a simple brick that has a strictly thin edge $e$ of index zero.
In either case, \mbox{$G-e$} is a smaller brick that is also \BvN\ and \PMc. By the induction hypothesis,
$G-e$ is one of the graphs listed in Theorem~\ref{thm:BvN-PMc-bricks}. The result follows from
Propositions~\ref{prop:wheel_plus_edge}~and~\ref{prop:Murty_graph_plus_edge}.

\smallskip
Now suppose that $G$ is a simple brick that is free of strictly thin edges of index zero.
If $G$ is a \NT\ brick then we are done by Proposition~\ref{prop:NT-brick-BvN-PMc}.
Now suppose that $G$ is not a \NT\ brick.
In this case, we invoke Theorem~\ref{thm:strictly-thin-edge};
whence $G$ has a strictly thin edge $e$ of index at least one.
We let $H$ denote the retract of $G-e$. Thus $H$ is a simple brick
that is \BvN\ as well as \PMc. By the induction hypothesis, $H$ is either an odd wheel, or it is the Murty graph.
We consider three cases depending on the index of $e$.

\medskip
\noindent
{\bf Case 1:} Edge $e$ is of index one.

\smallskip
\noindent
It follows from Proposition~\ref{prop:brick-minus-thin-edge-cases} that $H$ has just one contraction vertex, and its degree is
at least four. By the induction hypothesis, $H$ is either an odd wheel of order at least six, or $H$ is the Murty graph.
In the latter case, we arrive at a contradition by invoking Proposition~\ref{prop:Murty_graph_index_1}.

\smallskip
Now suppose that $H$ is an odd wheel. Corollary~\ref{cor:wheels_index_1} implies that there exists a sequence
$G_1, G_2, \dots, G_r$ of simple bricks such that (i) $G_1 = W_5$;
(ii) $G_r = G$;
(iii) for $2 \leq i \leq r$, $G_i$~has a strictly thin edge $e_i$ such that $G_{i-1}$ is the retract of $G_i-e_i$;
(iv) $e_2$ is of index one in~$G_2$; and
(v) if $r \geq 3$ then $e_3$ is of index zero in $G_3$.

\smallskip
Note that all of the bricks $G_1, G_2, \dots, G_r$ are \BvN\ and \PMc.
Since $G_1$ is $W_5$, Proposition~\ref{prop:W5_index_1} implies that $G_2$ is indeed the Murty graph.
If $r=2$ then we are done.
Now suppose that $r \geq 3$. Thus $G_3$ is obtained from $G_2$ by adding an edge joining two nonadjacent vertices.
We invoke Proposition~\ref{prop:Murty_graph_plus_edge} to arrive at a contradiction.

\medskip
\noindent
{\bf Case 2:} Edge $e$ is of index two.

\smallskip
\noindent
It follows from Proposition~\ref{prop:brick-minus-thin-edge-cases} that $H$ has two contraction vertices, each of which
has degree at least four. By the induction hypothesis, $H$ is in fact the Murty graph.
We invoke Proposition~\ref{prop:Murty_graph_index_2} to arrive at a contradiction.

\medskip
\noindent
{\bf Case 3:} Edge $e$ is of index three.

\smallskip
\noindent
It follows from Proposition~\ref{prop:brick-minus-thin-edge-cases} that $H$ has just one contraction vertex, and its degree is
at least five. By the induction hypothesis, $H$ is in fact an odd wheel of order at least six.
If $H=W_5$, we arrive at a contradiction by invoking Proposition~\ref{prop:W5_W7_index_3}.

\smallskip
Now suppose that $H=W_{2k+1}$ where $k \geq 3$.
Corollary~\ref{cor:wheels_index_3} implies that there exists a sequence $G_1, G_2, \dots, G_r$ of simple bricks such that
(i) $G_1=W_7$;
(ii) $G_r = G$;
(iii) for $2 \leq i \leq r$, $G_i$ has a strictly thin edge $e_i$ such that $G_{i-1}$ is the retract of $G_i-e_i$; and
(iv) $e_2$ is of index three in~$G_2$.

\smallskip
Note that all of the bricks $G_1, G_2, \dots, G_r$ are \BvN\ and \PMc.
However, since $G_1$ is $W_7$, Proposition~\ref{prop:W5_W7_index_3} implies that either $G_2$ is not \BvN, or $G_2$ is not \PMc, possibly both.
We thus have a contradiction.

\medskip
This completes the proof of Theorem~\ref{thm:BvN-PMc-bricks}.
\end{proofOf}

\subsection{An infinite family}

Thus far we have given a complete characterization of matching covered graphs that are \BvN\ as well as \PMc.
In a recent paper, Lucchesi, Carvalho, Kothari and Murty \cite{lckm18} showed that the problem of characterizing
\BvN\ bricks\footnote{As mentioned earlier, this class is the same as the class of solid bricks.}
is equivalent to the problem of characterizing $\overline{C_6}$-free bricks.
In the same work, they present two infinite families of \BvN\ bricks; see \cite[Figures 7 and 8]{lckm18}.
The members of these families, except for the Murty graph, are not \PMc\ (as expected).

\smallskip
Now we present a family of \PMc\ bricks that are not \BvN.
For $k \geq 2$, let $W_{2k+1}$ denote the odd wheel and let $Q$ denote its rim.
We label the vertices of $Q$, in cyclic order, as follows: $(w_0, w_1, \dots, w_{2k})$.
We obtain a new graph from $W_{2k+1}$ as follows.

\smallskip
First split the hub of $W_{2k+1}$ into three pairwise nonadjacent vertices $u_2,u_1$ and $v_2$ and distribute
the spokes of the wheel so that:
\begin{itemize}
\item $v_2$ is adjacent to $w_0$ and $w_4$;
\item $u_1$ is adjacent to $w_2$; and
\item $u_2$ is adjacent to the remaining $2k-2$ vertices; that is, to $w_1$, to $w_3$, and if $k \geq 3$ then to each of
$w_5, w_6, \dots, w_{2k}$.
\end{itemize}

Now add two new vertices $u_0$ and $v_0$, and the following edges: $u_0u_2,u_0u_1,u_0v_0,v_0u_1,v_0v_2$.
The graph obtained in this manner is denoted by $\mathcal{P}_{2k+1}$.
In Figure~\ref{fig:PMc_but_not_BvN}, we show the brick $\mathcal{P}_7$.
Observe that the retract of $\mathcal{P}_{2k+1} - u_0v_0$ is the odd wheel $W_{2k+1}$.
It follows from Theorem~\ref{thm:brick-expansion} that $\mathcal{P}_{2k+1}$ is indeed a brick.

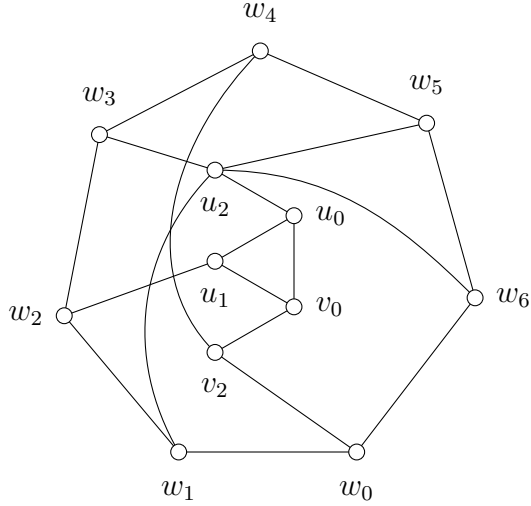
\begin{figure}[!htb]
\centering
\begin{tikzpicture}[scale=0.7]

\draw (120:2) -- (143:4);
\draw (120:2) -- (41:4);
\draw (120:2) to [out=0,in=135] (350:4);
\draw (120:2) to [out=225,in=120] (245:4);

\draw (0:-1) -- (195:4);

\draw (240:2) -- (295:4);
\draw (240:2) to [out=135,in=225] (92:4);

\draw (60:1) -- (120:2)node{}node[nodelabel,below]{$u_2$};
\draw (-60:1) -- (240:2)node{}node[nodelabel,below]{$v_2$};
\draw (0:-1) -- (60:1)node{}node[nodelabel,right]{$u_0$} -- (-60:1)node{}node[nodelabel,right]{$v_0$} -- (0:-1)node{}node[nodelabel,below]{$u_1$};

\draw (245:4) -- (195:4)node{}node[nodelabel,left]{$w_2$} -- (143:4)node{}node[nodelabel,above]{$w_3$} -- (92:4)node{}node[nodelabel,above]{$w_4$} -- (41:4)node{}node[nodelabel,above]{$w_5$} -- (350:4)node{}node[nodelabel,right]{$w_6$} -- (295:4)node{}node[nodelabel,below]{$w_0$} -- (245:4)node{}node[nodelabel,below]{$w_1$};

\end{tikzpicture}
\vspace*{-0.2in}
\caption{The brick $\mathcal{P}_7$}
\label{fig:PMc_but_not_BvN}
\end{figure}

\begin{prop}
\label{prop:PMc_but_not_BvN}
For $k \geq 2$, the brick $\mathcal{P}_{2k+1}$ is \PMc, but it is not \BvN.
\end{prop}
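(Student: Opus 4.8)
The plan is to use the two characterizations available to us: by Theorem~\ref{thm:BvN-iff-no-odd-conformal-bicycle} it suffices, for the first assertion, to exhibit an odd conformal bicycle in $\mathcal{P}_{2k+1}$, and by Theorem~\ref{thm:PMc-iff-no-even-conformal-bicycle} it suffices, for the second, to show that $\mathcal{P}_{2k+1}$ has no even conformal bicycle. (That $\mathcal{P}_{2k+1}$ fails at least one of the two properties is already a consequence of Proposition~\ref{prop:W5_W7_index_3} together with Corollary~\ref{cor:wheels_index_3}; the point here is to decide precisely which one.) We keep the notation of the construction, writing $Q$ for the rim cycle $(w_0,w_1,\ldots,w_{2k})$ and $g:=\{u_0,v_0,u_1,u_2,v_2\}$ for the ``hub gadget'' that replaces the hub of $W_{2k+1}$. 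We shall lean on three structural observations: $G-g$ is exactly the odd cycle $Q$; the vertices $u_0$ and $v_0$ have all their neighbours inside $g$ (indeed $u_0\sim u_1,u_2,v_0$ and $v_0\sim u_0,u_1,v_2$); and among the vertices of $g$ only $u_1,u_2,v_2$ are incident with edges leaving $g$ --- the unique such edge at $u_1$ being the spoke $u_1w_2$, those at $v_2$ being $v_2w_0$ and $v_2w_4$, and those at $u_2$ being $u_2w_1$, $u_2w_3$, and, when $k\ge 3$, also $u_2w_5,\ldots,u_2w_{2k}$.

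For the first assertion I would take $C_1:=(u_0,u_1,v_0)$, the triangle, and let $C_2$ be the cycle formed by the two spokes $v_2w_0,v_2w_4$ together with the rim arc from $w_0$ to $w_4$ that avoids $\{w_1,w_2,w_3\}$; this second cycle has length $2k-1$, so $(C_1,C_2)$ is a pair of vertex-disjoint odd cycles. One checks at once that $G-V(C_1)-V(C_2)=\{u_2,w_1,w_2,w_3\}$, which is matched by $\{u_2w_1,\,w_2w_3\}$. Hence $(C_1,C_2)$ is an odd conformal bicycle and $\mathcal{P}_{2k+1}$ is not \BvN.

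For the second assertion, suppose that $(C_1,C_2)$ is an even conformal bicycle and let $N$ be a perfect matching of $G-V(C_1)-V(C_2)$. Since $G-g=Q$ is a single odd cycle, a cycle of $G$ disjoint from $g$ must equal $Q$, which is odd; so each $C_i$ meets $g$, and, since $u_0$ and $v_0$ send no edge out of $g$, the trace $C_i\cap g$ is a disjoint union of paths whose endpoints lie in $\{u_1,u_2,v_2\}$ (a path being allowed to consist of a single vertex, necessarily $u_2$ or $v_2$, since $u_1$ has only one spoke). Moreover any vertex of $g$ lying on neither cycle must be saturated by $N$, and for $u_0$ and $v_0$ this is severe, as their only possible partners are other vertices of $g$. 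The argument then proceeds by a short case analysis on where $u_0$ and $v_0$ sit: each of them lies on $C_1$, lies on $C_2$, or is covered by $N$. Most branches collapse immediately --- for example, if $u_0$ is covered by $N$ then so is one of $u_1,u_2,v_0$, and the triangle on $\{u_0,u_1,v_0\}$ then forces $u_1w_2\in N$ and $u_0v_0\in N$, after which $C_2$ would have to be the even cycle through both spokes of $v_2$, namely $(v_2,w_0,w_1,w_2,w_3,w_4)$, contradicting $w_2\notin V(C_1)\cup V(C_2)$. The branch in which $u_0\in C_1$ and $v_0\in C_2$ (or vice versa) is contradictory at once, since $C_1$ would then be forced to use $u_0u_1$ and $u_0u_2$, leaving $v_0$ with too few usable neighbours on $C_2$.

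I expect the one genuinely delicate case --- and the main obstacle --- to be that in which some $C_i$ runs along the path $u_1\,u_0\,v_0\,v_2$ inside $g$ (the remaining sub-configurations with $u_0,v_0$ on a common cycle again die by the $w_2$-collision just described, or leave only the single vertex $u_1$ for the other trace). In this case $C_i$ must leave $g$ through $w_2$ at the $u_1$-end and through $w_0$ or $w_4$ at the $v_2$-end, so $C_i$ consists of this length-$3$ path together with a rim path joining $w_2$ to that spoke-vertex. The two rim paths available have lengths $2$ and $2k-1$: the short one makes $C_i$ a $7$-cycle, hence odd, which is excluded because both cycles of an even bicycle must be even; the long one makes $C_i$ use every vertex of $Q$ except one ($w_1$ or $w_3$), so that $V(G)\setminus(V(C_1)\cup V(C_i))$ consists only of $u_2$ and that single rim vertex and contains no cycle for $C_{3-i}$ to be. Either way we reach a contradiction, so $\mathcal{P}_{2k+1}$ contains no even conformal bicycle and is therefore \PMc.
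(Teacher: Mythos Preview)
Your argument for ``not \BvN'' is exactly the paper's: the same triangle $(u_0,u_1,v_0)$ and the same odd cycle through $v_2$ and the long rim arc, with the matching $\{u_2w_1,w_2w_3\}$ on the leftover vertices.

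For the \PMc\ part you take a genuinely different route. The paper does \emph{not} case on the positions of $u_0$ and $v_0$; instead it first observes that $G-u_2-v_2$ contains no even cycle (its only cycles are the odd rim $Q$ and the odd triangle $(u_0,u_1,v_0)$, joined by the bridge $u_1w_2$), which forces $v_2\in V(C_1)$ and $u_2\in V(C_2)$ immediately. It then uses the $3$-cut $D:=\partial(\{u_0,u_1,v_0\})=\{v_0v_2,\,u_1w_2,\,u_0u_2\}$ and the parity of $|C_1\cap D|$: since $u_0u_2\notin C_1$, either $C_1\cap D=\emptyset$ (forcing $C_1$ to be the $6$-cycle $(v_2,w_0,w_1,w_2,w_3,w_4)$ and leaving the triangle $\{u_0,u_1,v_0\}$ unmatched) or $C_1\cap D=\{v_0v_2,u_1w_2\}$ (which, after noting $u_0\in V(C_1)$, pins $C_1$ down uniquely and leaves only $u_2$ and one rim vertex for $C_2$). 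Two cases, no sub-branches.

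Your approach is viable and your handling of the decisive configuration (the $u_1u_0v_0v_2$ trace, with the $7$-cycle versus $(2k{+}4)$-cycle dichotomy) is correct and is essentially the paper's second case seen from the other side. But two points in your sketch are not right as written. First, the sentence ``the triangle on $\{u_0,u_1,v_0\}$ then forces $u_1w_2\in N$ and $u_0v_0\in N$'' is false: if $u_0u_1\in N$ or $u_0u_2\in N$ you do get contradictions, but by different mechanisms (too few gadget vertices remain to host two cycles), not by being forced into $u_0v_0\in N$. Second, you never treat the branch ``$v_0$ covered by $N$ while $u_0$ lies on a cycle'', and in the main case you silently assume $u_2\notin V(C_i)$; both are easy (if $u_2\in V(C_i)$ then all of $g$ lies on $C_i$ and $C_{3-i}$ cannot meet $g$), but they should be said. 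If you want to keep your framework, it is cleanest to begin with the paper's observation that $G-u_2-v_2$ has no even cycle; that single line eliminates most of your auxiliary branches at once and sends you straight to the $3$-cut argument.
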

\begin{proof}
Let $C_1:=(v_0,u_1,u_0,v_0)$.
If $k=2$ then let $C_2:=(w_0,v_2,w_4,w_0)$; otherwise let $C_2:=(w_0,v_2,w_4,w_5,w_6,\dots,w_{2k},w_0)$.
Observe that $(C_1,C_2)$ is an odd conformal bicycle. Thus $\mathcal{P}_{2k+1}$ is not \BvN.

\smallskip
Now let $G:=\mathcal{P}_{2k+1}$ for some $k \geq 2$, and suppose (for the sake of contradiction) that $G$ is not \PMc.
Consequently, $G$ has an even conformal bicycle $(C_1,C_2)$. Observe that $G-v_2-u_2$ is free of even cycles.
Thus one of $C_1$ and $C_2$ contains $v_2$ whereas the other contains $u_2$.
Adjust notation so that $v_2 \in V(C_1)$ and $u_2 \in V(C_2)$.

\smallskip
We let $D:=\partial(\{v_0,u_1,u_0\})$. Note that $D$ is a $3$-cut.
Since $G$ is simple, we may also view a cycle as a set of edges without any ambiguity.
We will use the fact that $|C_1 \cap D|$ and $|C_2 \cap D|$
are both even. Since $u_2 \in V(C_2)$, we infer that either $C_1 \cap D = \{v_2v_0,u_1w_2\}$ or otherwise $C_1 \cap D = \emptyset$.

\smallskip
First consider the case in which $C_1 \cap D = \emptyset$. Thus $v_2w_0,v_2w_4 \in C_1$. Since $C_1$ is an even cycle, $C_1$ is now
uniquely determined; more specifically, $C_1 := (w_0,v_2,w_4,w_3,w_2,w_1,w_0)$.
Now we observe that $C_2 \cap D=\emptyset$.
However, $u_2 \in V(C_2)$.
Consequently, the triangle $(v_0,u_1,u_0,v_0)$ is a component of $G-V(C_1)-V(C_2)$. Thus the pair $(C_1,C_2)$ is not conformal,
contrary to our assumption.

\smallskip
Now consider the case in which $C_1 \cap D = \{v_2v_0,u_1w_2\}$. It follows that \mbox{$C_2 \cap D = \emptyset$}.
If $u_0 \notin V(C_1)$ then $u_0$ is an isolated vertex in $G-V(C_1)-V(C_2)$, contrary to our assumption.
Thus $u_0 \in V(C_1)$, whence $v_0u_0,u_1u_0 \in C_1$. Note that $C_1$ contains exactly one of $v_2w_0$ and $v_2w_4$.
However, these two cases are symmetric. Adjust notation so that $v_2w_0 \in C_1$. Since $C_1$ is an even cycle,
$C_1$ is again determined uniquely; more specifically, $C_1 := (w_0,v_2,v_0,u_0,u_1,w_2,w_3,w_4, \dots, w_{2k},w_0)$.
Now observe that $G-V(C_1)$ has precisely two vertices --- namely, $u_2$ and $w_1$. This contradicts the
existence of $C_2$.

\smallskip
Since each case leads us to a contradiction, we conclude that $\mathcal{P}_{2k+1}$ is indeed \PMc.
This completes the proof of Proposition~\ref{prop:PMc_but_not_BvN}.
\end{proof}

\smallskip
\noindent
{\bf Acknowledgements:} We would like to extend our gratitude to one of the anonymous referees who provided
detailed feedback and also observed a bug in the ``proof'' of Lemma~3.18 in the original manuscript.

\bibliographystyle{plain}
\bibliography{clm}

\begin{thebibliography}{10}

\bibitem{bala81}
E.~Balas.
\newblock Integer and fractional matchings.
\newblock In P.~Hansen, editor, {\em Studies on Graphs and Discrete
  Programming}, volume~59, pages 1--13. North Holland Mathematics Studies,
  1981.

\bibitem{baru74}
M.~L. Balinksi and A.~Russakoff.
\newblock On the assignment polytope.
\newblock {\em \textsc{siam} Review}, 16(4):516--525, 1974.

\bibitem{clm02}
M.~H. Carvalho, C.~L. Lucchesi, and U.~S.~R. Murty.
\newblock On a conjecture of {L}ov{\'a}sz concerning bricks. {I}. {T}he
  characteristic of a matching covered graph.
\newblock {\em J.~Combin.~Theory Ser.~B}, 85:94--136, 2002.

\bibitem{clm04}
M.~H. Carvalho, C.~L. Lucchesi, and U.~S.~R. Murty.
\newblock The perfect matching polytope and solid bricks.
\newblock {\em J.~Combin.~Theory Ser.~B}, 92:319--324, 2004.

\bibitem{clm05}
M.~H. Carvalho, C.~L. Lucchesi, and U.~S.~R. Murty.
\newblock Graphs with independent perfect matchings.
\newblock {\em J.~Graph Theory}, 48:19--50, 2005.

\bibitem{clm06}
M.~H. Carvalho, C.~L. Lucchesi, and U.~S.~R. Murty.
\newblock How to build a brick.
\newblock {\em Discrete Math.}, 306:2383--2410, 2006.

\bibitem{chva75}
V.~Chv{\'a}tal.
\newblock On certain polytopes associated with graphs.
\newblock {\em J.~Combin.~Theory Ser.~B}, 18:138--154, 1975.

\bibitem{elp82}
J.~Edmonds, L.~Lov\'asz, and William~R. Pulleyblank.
\newblock Brick decomposition and the matching rank of graphs.
\newblock {\em Combinatorica}, 2:247--274, 1982.

\bibitem{komu16}
N.~Kothari and U.~S.~R. Murty.
\newblock ${K}_{4}$-free and $\overline{C_6}$-free planar matching covered
  graphs.
\newblock {\em J.~Graph Theory}, 82(1):5--32, 2016.

\bibitem{lova87}
L.~Lov\'asz.
\newblock Matching structure and the matching lattice.
\newblock {\em J.~Combin.~Theory Ser.~B}, 43:187--222, 1987.

\bibitem{lckm18}
C.~L. Lucchesi, M.~H. Carvalho, N.~Kothari, and U.~S.~R. Murty.
\newblock On two unsolved problems concerning matching covered graphs.
\newblock {\em \textsc{siam} J. Discrete Math.}, 32(2):1478--1504, 2018.

\bibitem{noth07}
S.~Norine and R.~Thomas.
\newblock Generating bricks.
\newblock {\em J.~Combin.~Theory Ser.~B}, 97:769--817, 2007.

\bibitem{wlclsl13}
X.~Wang, Y.~Lin, M.~H. Carvalho, C.~L. Lucchesi, G.~Sanjith, and C.~H.~C.
  Little.
\newblock A characterization of {PM}-compact bipartite and near-bipartite
  graphs.
\newblock {\em Discrete Math.}, 313:772--783, 2013.

\bibitem{wslc14}
X.~Wang, W.~Shang, Y.~Lin, and C.~L. Lucchesi.
\newblock A characterization of {PM}-compact claw-free cubic graphs.
\newblock {\em Discrete Math. Algorithms Appl.}, 6(2), 2014.

\bibitem{wyl15}
X.~Wang, J.~Yuan, and Y.~Lin.
\newblock A characterization of {PM}-compact {H}amiltonian bipartite graphs.
\newblock {\em Acta Math. Appl. Sin. Engl. Ser.}, 31(2):313--324, 2015.

\end{thebibliography}

\end{document}